\documentclass[12pt]{amsart}
\usepackage{amssymb,amscd,verbatim}
\usepackage[all]{xy}
\usepackage[colorlinks,linkcolor=blue,citecolor=blue,urlcolor=red]{hyperref}
\usepackage{aurical}
\usepackage[T1]{fontenc}
\usepackage[symbol]{footmisc}

\setcounter{section}{-1}

\setlength{\textheight}{22.5cm} 
\setlength{\textwidth}{15.5cm}
\setlength{\oddsidemargin}{0.3cm}
\setlength{\topmargin}{-1cm}
\setlength{\evensidemargin}{\oddsidemargin}

\usepackage{color}

% The new commands
\swapnumbers
\newtheorem{thm}{Theorem}[subsection]
\newtheorem{propose}[thm]{Proposition}
\newtheorem{lemma}[thm]{Lemma}
\newtheorem{cor}[thm]{Corollary}

\theoremstyle{definition}
\newtheorem{defn}[thm]{Definition}
\newtheorem{axiom}[thm]{Axiom}
\newtheorem{remark}[thm]{Remark}
\newtheorem{remarks}[thm]{Remarks}
\newtheorem{example}[thm]{Example}
\newtheorem{examples}[thm]{Examples}

\newcommand{\N}{\mathbb{N}}
\newcommand{\Z}{\mathbb{Z}}     % For integers
\newcommand{\Q}{\mathbb{Q}}     % For rational numbers
 
     % For Complex numbers
     
   % For Affine space  

            %For simplicial

 %for Spec
\newcommand{\Hom}{\operatorname{\underline{\sf Hom}}} % For Homologies
\newcommand{\Coh}{\operatorname{\underline{\sf Coh}}} % For Homologies
\newcommand{\Dom}{{\operatorname{\sf Dom}}}  
\newcommand{\Add}{\operatorname{\sf Add}}      % For additive cat.
 
\newcommand{\Ex}{\operatorname{\sf Ex}}      % For exact  cat.
\newcommand{\Lex}{\operatorname{\sf Lex}}      % For exact  cat.
\newcommand{\Cat}{\operatorname{\sf Cat}}

      % For Aut
\newcommand{\Ext}{\operatorname{Ext}}      % For Ext
  % For Biext
    % For RHom
   % For RGlobal
          % For Voevodsky DM
          
\newcommand{\Graph}{\operatorname{Graph}}  
\newcommand{\Pro}{\operatorname{Pro}}

\newcommand{\Rep}{\operatorname{\sf Rep}}
\newcommand{\Psh}{\operatorname{Pshv}}

\newcommand{\Shv}{\operatorname{Shv}}
\newcommand{\Ch}{\operatorname{Ch}}

\newcommand{\SSet}{\operatorname{SSet}}

\newcommand{\T}{\mathbb{T}} % For Theories
\newcommand{\TMod}{\text{$\T$-{\sf Mod}}}
\newcommand{\RMod}{\text{\rm $R$-{Mod}}}
\newcommand{\Rmod}{\text{\rm $R$-{mod}}}

\newcommand{\Kmod}{\text{\rm $K$-{mod}}}
\newcommand{\ZMod}{\text{\rm $\Z$-{Mod}}}
\newcommand{\Zmod}{\text{\rm $\Z$-{mod}}}

\newcommand{\Qmod}{\text{\rm $\Q$-{mod}}}
\newcommand{\iRMod}{\text{\rm $R$-{\underline{Mod}}}}
\newcommand{\iRmod}{\text{\rm $R$-{\underline{mod}}}}
\newcommand{\iZMod}{\text{\rm $\Z$-{\underline{Mod}}}}
\newcommand{\iZmod}{\text{\rm $\Z$-{\underline{mod}}}}
\newcommand{\iKmod}{\text{\rm $K$-{\underline{mod}}}}

\newcommand{\Ind}{\operatorname{Ind}}
\newcommand{\Ab}{\operatorname{Ab}}
\newcommand{\fp}{\operatorname{fp}}

\newcommand{\End}{\operatorname{End}}      % For endomorphisms

  % For sheaf Ext
  %For sheaf Hom
  %For sheaf End
%For sheaf RHom
  % For internal Hom

 % For characters

     % For group schemes
    % For Hyper
 % For HyperExtension

\newcommand{\im}{\operatorname{Im}}        % For the image of a morphism
\renewcommand{\ker}{\operatorname{Ker}}  % For the kernel of a morphism
\newcommand{\coker}{\operatorname{Coker}} % For the cokernel of a morphism
\newcommand{\gr}{\operatorname{gr}}        % For associated Graded

     % For the Total Complex
     % For the total complex

      % For the N\'eron Severi group
    % For rank

\newcommand{\tr}{{\operatorname{tr}}}        % For the trace
      % For quasi-iso
%\newcommand{\qi}{\operatorname{q.i.}}      % For quasi-iso

\newcommand{\by}[1]{\stackrel{#1}{\rightarrow}}
\newcommand{\longby}[1]{\stackrel{#1}{\longrightarrow}}

\renewcommand{\tilde}{\widetilde}
\newcommand{\df}{\mbox{\,${:=}$}\,}
\newcommand{\ie}{{\it i.e. }}
\newcommand{\cf}{{\it cf. }}
\newcommand{\eg}{{\it e.g. }}

\newcommand{\into}{\hookrightarrow}
\newcommand{\onto}{\mbox{$\to\!\!\!\!\to$}}
\newcommand{\qto}{\rightleftarrows}
\renewcommand{\implies}{\mbox{$\Rightarrow$}}
  %For vertical `equals'
                                              %sign in display mode
 % For vertical
                                             %`isomorphism sign in diagrams
   %For \Z (integers)
                                             %in subscript
   %\P in subscript
   %\C in subscript
   %\Q in subscript
\renewcommand{\c}{\mbox{\scriptsize{$\circ$}}} %composition of maps

\newcommand{\limdir}[1]{\mathop{\rm
``lim"}_{\buildrel\longrightarrow\over{#1}}}
\newcommand{\limd}[1]{\mathop{\rm
lim}_{\buildrel\longrightarrow\over{#1}}}

\renewcommand{\lim}{\varprojlim}

\newcommand{\boxtensor}{\def\boxtimesten{\Box\kern-7.59pt\raise1.2pt
\hbox{$\times$} }}                                  %For boxtensor

% The new environments
\newcounter{elno}                   % This to number lists

\newcommand{\cA}{\mathcal{A}}
\newcommand{\cB}{\mathcal{B}}
\newcommand{\cC}{\mathcal{C}}
\newcommand{\cD}{\mathcal{D}}

\newcommand{\cF}{\mathcal{F}}

\newcommand{\cP}{\mathcal{P}}

\newcommand{\cR}{\mathcal{R}}
\newcommand{\cS}{\mathcal{S}}

\renewcommand{\phi}{\varphi}
\renewcommand{\epsilon}{\varepsilon}
%%%%%%%%%%%%%%%%%%%%%%%%%%%%%%%%%%%%%%%%%%%%%%%%%%%%%%%

\title{Universal cohomology theories}
\author{Luca Barbieri-Viale}
\address{Dipartimento di Matematica ``F. Enriques'', Universit{\`a} degli Studi di Milano\\  Via C. Saldini, 50\\ I-20133 Milano\\ Italy}
\email{luca.barbieri-viale@unimi.it}
%\date{\today \ (Preliminary Version)}
\subjclass[2010]{14F99; 18G60; 55N35; 03C60}
\keywords{Cohomology theory; motives}

\begin{document}
\begin{abstract}
We furnish any category of a universal (co)homology theory. Universal (co)homologies and universal relative (co)homologies are obtained by showing representability of certain functors and take values in $R$-linear abelian categories of motivic nature, where $R$ is any commutative unitary ring. Universal homology theory on the one point category yields ``hieratic'' $R$-modules, \ie the indization of Freyd's free abelian category on $R$. Grothendieck $\partial$-functors and satellite functors are recovered as certain additive relative homologies on an abelian category for which we also show the existence of universal ones. 
\end{abstract}
\maketitle
\tableofcontents
\section{Introduction}
The main goal of this paper is to show that there is a ``simple'' (co)homological picture providing universal (co)homology theories in abelian categories, revisiting and developing the previously hinted construction of theoretical motives \cite{BV}. This unified framework for (co)homology theories on any fixed category $\cC$ with values in variable abelian categories $\cA$ is achieved through the solution of representability problems.

\subsection{A glimpse at the foremost theme} 
The general constructive method for universal theories can be sketched as follows: {\it i)} represent (co)homology theories in abelian categories through suitable diagrams, {\it ii)} show universal representations of such  diagrams and {\it iii)} impose ``relations'' in order to satisfy axioms. Note that, in principle, this method can be adapted to (co)homology theories in non-additive categories with suitable exactness properties. Moreover, it is conceivable and desirable that this framework shall also be revisited in the $(\infty, 1)$-category setting.

Notably, all these (co)homological constructions should have geometric and arithmetic applications, providing a new context and a precise formulation of universal homology theories for topological spaces, manifolds and schemes with values in corresponding abelian categories. 

Novelties already appear in the  case of the trivial category $\cC = {\bf 1}$ (here ${\bf 1}$ is the one point category) for which the model category of simplicial sets $\SSet$ provides the universal homotopy theory; in this case, for any (commutative unitary) ring $R$, we also have the universal $R$-linear abelian category $\Ab_R$ generated by $R$ regarded as a preadditive category: this is Freyd's universal (essentially small) $R$-linear abelian category on the point. This abelian category $\Ab_R$ is characterised by the following property: picking an object of an (essentially small) $R$-linear abelian category $\cA$, \ie  a functor  ${\bf 1} \to \cA$,  is equivalent to giving an $R$-linear exact functor from $\Ab_R$ to $\cA$. 

Note that $\Ab_R\neq \Rmod$ the category of finitely presented $R$-modules, in general. The indization $\Ind \Ab_R\cong\iRMod$ is equivalent to the Grothendieck category of ``hieratic'' $R$-modules, \ie $R$-linear additive functors from $\Rmod$ to the category  $\RMod$ of all $R$-modules. By the universal property, we obtain a canonical $R$-linear exact functor from ``hieratic'' $R$-modules to any Grothendieck category, such as all $R$-modules as well as ``condensed'' $R$-modules, sending the universal ``hieratic'' $R$-module to a generator.

For example, if $R=\Z$ is the ring of integers, we have that $\Ab_\Z\neq \Zmod$ but there is a natural exact quotient functor $\Ab_\Z\onto \Zmod$ whose $\Q$-linearization $\Ab_\Z\otimes\Q = \Qmod$ is given by finite dimensional $\Q$-vector spaces. 
The category of chain complexes $\Ch (\iZMod)$ also provides a refinement of $\Ch (\ZMod)$: the universal homotopy and homology theory on a point yields a Quillen pair
$$\SSet \qto \Ch (\iZMod)$$
refining the well known Quillen pair between simplicial sets and simplicial abelian groups. 

\subsection{Homology theories} 
The claimed universal homological picture can be achieved by constructing certain categories  whose objects are just homology or relative homology theories on a category $\cC$ with values in an abelian category $\cA$ at least. This $\cA$ could be required to have further structure and properties such as being $R$-linear or a Grothendieck category or a tensor category. The construction is clearly depending on the axioms that we choose to define such homology theories and these in turn depends on geometric properties of $\cC$.  The minimal data and conditions are that of $H =\{ H_i\}_{i\in \Z}$ a $\Z$-indexed family of functors $H_i : \cC\to \cA$. A morphism $\varphi: H \to H'$ between homology theories on $\cC$ with values in $\cA$ shall be given by a $\Z$-indexed family of natural transformations (see Definition \ref{homology}) yielding a category 
$$\Hom  (\cC, \cA)$$ of homologies. By composing with exact functors we can make up a 2-functor
$$\cA \leadsto  \Hom  (\cC, \cA): \Ex \to \Cat$$ 
from the 2-category $\Ex$ of abelian categories and exact functors to the (very large) 2-category of categories and functors.  A first key result is that $\Hom (\cC, - )$ is 2-representable by an abelian category of the same size of $\cC$: we simply denote by $\cA(\cC)$ the abelian category such that
$$\Hom (\cC, \cA)\cong \Ex (\cA(\cC), \cA)$$
is an equivalence, natural in $\cA$ (see Theorem \ref{pure}). If desired we can add up $R$-linearity and axioms that are preserved by $R$-linear exact functors and get ``decorated'' homological functors.  The construction of  $\cA(\cC)$ goes through the universal abelian representation 
theorem  \cite{BVP} (see Theorem \ref{unrep}) which is in turn based on Freyd's universal abelian category (see also \cite[Chap.\,4]{P} and \cite[Prop.\,12.4.1]{KH} for details). 

We shall refer to $\cA(\cC)$ as the $R$-linear \emph{abelian category generated by the homology theory} and $H\in \Hom (\cC, \cA(\cC))$ corresponding to the identity functor on $\cA(\cC)$ shall be the \emph{universal homology} on $\cC$. In fact, for any homology $H'\in \Hom (\cC, \cA)$ we have $r_{H'}: \cA(\cC)\to \cA$ the corresponding exact functor such that $r_{H'}(H) =H'$ is obtained by composition with  the ``realization'' functor $r_{H'}$. 

Adding axioms to the homology theory we obtain ``decorated'' abelian categories $\cA^\dag(\cC)$ given by taking quotients of $\cA(\cC)$. For example, adding the point axiom (see Axiom \ref{pointax}), for $\cC = {\bf 1}$ we obtain $\cA^{\rm point}({\bf 1})\cong \iRmod$ finitely presented ``hieratic'' $R$-modules (see Proposition \ref{point} and Example \ref{pointex}). For $\cC = N$ a group or a monoid regarded as a category with a point object we obtain the universal $R$-linear representation $h: R[N]\to \End (H)$ for $H\in \cA^{\rm point}(N)$ (see Example \ref{grprep}).
 
The analogue  cohomology theory is defined by $H =\{ H^i\}_{i\in \Z}$ a $\Z$-indexed family of functors $H^i : \cC^{op}\to \cA$ and denote $\Coh (\cC, \cA)$ the category of cohomologies. Define the opposite $H^{op}$ of an homology
 $H =\{H_i\}_{i\in \Z} : \cC\to \cA$ as $\{H^i\df H_{i}\}_{i\in\Z} : \cC^{op}\to \cA^{op}$ given by the opposite functors and note that $\Coh (\cC, \cA)= \Hom (\cC,\cA^{op})$.  As a consequence of the abstract duality (see Theorem \ref{dualrep}) the opposite category $\cA(\cC)^{op}$ is the \emph{abelian category generated by the cohomology theory} (see Corollary \ref{dualhom}) \ie it represents the 2-functor of cohomology theories 
$$\Coh (\cC, \cA)\cong \Ex (\cA(\cC)^{op}, \cA)$$  
and we obtain the \emph{universal cohomology} $H^{op}\in \Coh (\cC, \cA(\cC)^{op})$ as the opposite of $H\in \Hom (\cC, \cA(\cC))$ the universal homology. 

\subsection{Relative homology theories} 
Relative theories depend on the choice of a distinguished subcategory of $\cC$ in such a way that we can form the category of pairs $\cC^\square$ whose objects are distinguished morphisms of $\cC$ and morphisms are commutative squares of $\cC$. Denote $(X,Y)$ a distinguished morphism from $Y$ to $X$, for short, that is an object of $\cC^\square$. For a relative homology theory with values in abelian category $\cA$ we mean a collection of $\Z$-indexed functors 
$$(X, Y)\in \cC^\square \leadsto H_i (X, Y)\in \cA$$
satisfying the following long exact sequence in $\cA$ 
$$\cdots\to H_i(Y,Z) \to  H_i(X,Z) \to  H_i(X,Y) \longby{\partial_i} H_{i-1}(Y,Z)\to \cdots $$ 
for the triple $(X,Y)$, $(X,Z)$ and $(Y,Z)$ in $\cC^{\square}$, where $\partial_i: H_i(X,Y) \to H_{i-1}(Y,Z)$ shall be assumed to exists; furthermore, this long exact sequence is assumed to be natural in a canonical way (see Definition \ref{relative}). 
We then obtain a 2-functor
 $$\cA \leadsto \Hom (\cC^\square, \cA): \Ex \to \Cat$$
and we can show that is representable, obtaining an abelian category $\cA_\partial(\cC)$ along with
$$\Hom (\cC^\square, \cA)\cong \Ex (\cA_\partial(\cC), \cA)$$
 an equivalence of categories which is natural in $\cA$ (see Theorem \ref{mixed}). The construction of  $\cA_\partial(\cC)$ is also obtained as an application of  Freyd's universal abelian category via the universal abelian representation theorem  \cite{BVP} (see Theorem \ref{unrep}). For $\cC$ with an initial object we have a canonical exact functor 
$$r_\partial : \cA (\cC)\to\cA_\partial(\cC)$$
such that the essential image is generating (see Theorem \ref{genmixed}). 
We have a dual notion of relative cohomology and the 2-functor $\Coh (\cC^\square, -)$ of relative cohomologies is also representable: we obtain the universal cohomology as the opposite of universal homology with values in $\cA_\partial(\cC)^{op}$ the opposite abelian category (see Corollary \ref{dualrelhom}). 

Note that for $\cA = \ZMod$ the category of relative homologies $\Hom (\cC^\square, \ZMod)$ and that of relative cohomologies $\Coh (\cC^\square, \ZMod)$  are exact definable categories (in the sense of \cite[Chap.\,10]{P}). Moreover,  the 2-category of small abelian categories with exact functors is antiequivalent to the 2-category of definable additive categories (see \cite{PR}).
 
We shall refer to $\cA_\partial(\cC)$ as the abelian category \emph{generated by the relative homology theory} and  the \emph{universal relative homology} on $\cC^{\square}$ is given by $H\in \Hom (\cC^{\square}, \cA_\partial(\cC))$, corresponding to the identity functor on $\cA_\partial(\cC)$, and it shall be described by
$$H =\{H_i\}_{i\in \Z } : \cC^{\square}\to  \cA_\partial (\cC)$$
a family of functors. For any relative homology $H'\in \Hom (\cC^{\square}, \cA)$ we have $r_{H'}: \cA_\partial(\cC)\to \cA$ the corresponding exact  functor such that $r_{H'}(H) =H'$. By the way adding axioms to the relative homology theory we obtain ``decorated'' relative homologies and corresponding ``decorated'' abelian categories $\cA_\partial^\dag(\cC)$ which are quotients of  $\cA_\partial(\cC)$ such that
$$\Hom_\dag (\cC^\square, \cA)\cong \Ex^\dag (\cA_\partial^\dag(\cC), \cA)$$
is an equivalence, as it will be clearer in the following.
For example, adding the point axiom we obtain $\cA_\partial^{\rm point}({\bf 2})\cong \iRmod$ (see Example \ref{pointexrel}). We can apply this framework to Grothendieck $\partial$-functors: for $\cC = \cA$ an abelian category and $\cA^\square$ the category of monos we get a \emph{universal Grothendieck homology} (see Theorem \ref{unipart}) representing the 2-functor  $\Hom_{\partial}(\cA, -)$ given by Grothendieck homological functors regarded as $\partial$-homologies (Definition \ref{parthom}). 

Remark that these categories $\cA_\partial^\dag (\cC)$ shall be equivalent to \emph{theoretical motives} obtained as  quotients of the abelian category $\cA [\T]$ introduced in \cite{BV} (see Proposition \ref{Tmotives}). Moreover, for any $H'\in \Hom (\cC^{\square}, \cA)$ we can form the quotient $$\cA(H')\df \cA_\partial(\cC)/\ker r_{H'}$$ and we obtain the induced $H\in \Hom (\cC^{\square}, \cA (H'))$ given by the image of the universal homology under the projection functor; here we have that  $H: \cC^{\square}\to  \cA (H')$ is  the \emph{universal homology generated by $H'$}, \ie it is universal with respect to relative homologies which are comparable with $H'$ in an obvious sense. For example, the universal homology generated by singular homology, \eg the construction of Nori motives \cite{HMS} is based on the latter. 

Relative homology can be adorned by transfers yielding a universal relative homology with transfers (see Remark \ref{trans}). 
Furthermore, ordinary homology theory is a ``decoration'' of relative homology obtained by imposing Eilenberg-Steenrod axioms \cite{ES}:  we shall recover the classical topological setting in  \cite{UCTII} and we actually get universal ordinary homology with respect to an homological structure on any category representing ordinary homologies with values in abelian categories. 

Finally, we stress out that these constructions are in addition to Dugger's universal homotopy \cite{DU} and Voevodsky's homological triangulated category of a site with an interval \cite{Vh}: a canonical comparison with universal homotopy theories is given by passing to chain complexes.

\subsection{Historical commentary and notes} 
The axiomatic approach to homology theories, in particular to singular versus cellular homology, as it was introduced by Eilenberg and Steenrod in topology,  has been largely influential and their unicity theorem quite astonishing: the first key step in this story was taken around the years 1945--1952, see \cite{ES} and \cite{ESF}. Then a ramified study of topological generalised (co)homology theories emerged, see \cite{Dy}, for a start. 

A parallel history is that of Grothendieck construction of a Weil cohomology in algebraic geometry, which started from a wish-list of axioms and was afforded in the years 1958--1966 after two other key foundational steps: a first step in homological algebra, with the concept of satellite and that of $\partial$-functor, see \cite{CE} and \cite{Gr}, and a second step was the introduction of Grothendieck topologies. Notably, the stride from Weil cohomology to Grothendieck ``motives'' and ``motivic cohomology'', was originally based on a third foundational step,  the Tannakian formalism, but this approach to  ``motives'' is still dependent on the standard conjectures (\eg see \cite{DT} and \cite{Be}). However, broadly speaking,  the category of  ``motives'', in Grothendieck vision,  shall be regarded as a way to express a sort of abelian envelope of algebraic varieties and ``motivic cohomology'' shall be the abelian avatar of a variety.\footnote{Grothendieck, \emph{R\'ecoltes et Semailles}, note 59 on page P47: [Another way to see the category of motives over a field $k$ is to visualise it as a sort of ``enveloping abelian category'' of the category of separated  schemes which are of finite-type over $k$. The motive associated with such a scheme $X$ (or ``motivic cohomology of $X$'', which I denote $H^*_{mot} (X)$) thus appears as a kind of abelianised ``avatar'' of $X$.]}

Remark that Freyd \cite{Fr} also considered the following general question: given a category how nicely can it be represented in an abelian category? However, since its appearance around 1965, Freyd's universal abelian category of an additive category has not been imagined to be linked to the construction of  ``motives''.   Freyd also observed that there is an embedding of a triangulated category in an abelian category which is universal with respect to homological functors and then Heller \cite{AH} in 1968 constructed a universal homology in a stable abelian category (see Neeman \cite{NT} for a quintessential explanation). 

Actually, on the algebraic geometry side of the story, around 1997,  Nori provided a universal abelian category, making use of a variant of the Tannakian formalism (see \cite{HMS} for full details). Previously, around 1987, Deligne \cite{De} introduced the abelian category of mixed realisations and Andr\'e \cite{A}, on 1996,  proposed motivated cycles showing how to avoid the standard conjectures.  However, Nori's idea as well as Andr\'e and Deligne ideas of  ``motives'' -- being based on the Tannakian formalism -- makes use of existing fiber functors. Consequently, ``motives'' through ``motivic Galois groups'' are available (see \cite{AK} and \cite{AB} for a comprehensive account on it) but the standard conjectures remain unsolved; the fundamental difference of the approach due to Voevodsky \cite{Vh} and Ayoub \cite{Ay}  is the construction of a ``triangulated category of motives'' instead of the abelian category but the correct ``motivic t-stucture'' is missing: it implies the standard conjectures  (see \cite{Be} for the relation between the triangulated and the Tannakian approach). 

In particular, Nori's construction is modelled on singular (co)homology regarded as a representation of a diagram and it is universal with respect to (co)homology theories which are comparable with singular (co)homology: this implies that it is not available in positive characteristics, for example. The reformulation of Nori's construction by making use of syntactic categories in \cite{BCL} has not been really fruitful, so far; it is quite handy for logical purposes but the algebraic structure of a syntactic category is rather hard to manage, \eg if one would like to get a tensor structure on it. 

The unified (co)homological framework in \cite{BV} is also a first step towards the construction of ``motives'' independently of fiber functors: it was settled in the language of categorical model theory but its translation in the language of representations of quivers started parallelly with \cite{BVP}; actually,  Freyd's universal abelian category is linked with the construction of Nori motives as well as with the triangulated categories of Voevodsky motives. A tensor version of Freyd's universal abelian category shall provide tensor product of ``motives'' \eg for abelian categories modelled on a given cohomology satisfying K\"unneth formula, see \cite{BVHP} and \cite{BVPT}.

Finally, a second step towards a unified framework for (co)homology theories on categories, following \cite{BV} -- along with the consequent approach to ``motives'' -- is completely reformulated and extended in this paper by making use of the rather elementary and classical method of solving representability problems.  

\subsubsection*{Acknowledgements} 
I am happy to express my thanks to Joseph Ayoub  whose advice, interest and criticism were the indispensable ingredients for the writing of this paper. I'm also deeply grateful to Mike Prest for his constant help and precious collaboration. I also would like to dedicate this paper to the memory of V. Voevodsky, A. Grothendieck and S. Eilenberg to whom I'm intellectually tied and feel beholden. Finally, I also like to thank UZH \& FIM--ETH Z\"urich  for providing support, hospitality and excellent working conditions. 

\subsubsection*{Notations and assumptions} We shall be adopting the current conventions on small and large categories, considering a fixed universe when small or locally small is specified, \eg see \cite[I \S 0-1]{sga4}, \cite[Convention 1.4.1.]{KS} and \cite[A1.1]{El}.  For abelian categories refer to  \cite[Chap.\,1]{Gr}, \cite{GG}, \cite[Part One]{KH} and \cite[Chap.\,8]{KS}. In particular, we say that $\cS$ is a \emph{Serre subcategory} or \emph{thick} subcategory of an abelian category $\cA$ if it is closed by subobjects, quotients and extensions see  \cite[\S 1.11]{Gr} and \cite[\S 1.2]{GG}; note that in \cite[Def.\,8.3.21 iv)]{KS} the weaker version of thick is adopted. Refer to the abelian category  $\cA/\cS$ as the \emph{quotient category}, see \cite[\S 1.11]{Gr}, \cf \cite[Ex.\,8.12]{KS}. A cocomplete abelian category $\cA$ where filtered colimits are exact and possessing a generator is a \emph{Grothendieck category} (but we almost not make use of the existence of a generator); the Grothendieck quotient $\cA/\cS$ is the quotient of a Grothendieck category $\cA$ by $\cS$ thick and localizing, \ie closed under arbitrary sums, see \cite[\S 1.2]{GG}, \cf \cite[Ex.\,8.13]{KS}. 

We denote specific categories like $\Ind \cC$ the indization of  $\cC$, see \cite[I \S 8]{sga4} and \cite[Chap.\,6]{KS}, \eg  $\RMod =\Ind \Rmod$ the category of $R$-modules where $\Rmod$ is that of finitely presented $R$-modules (here $R$ shall be a commutative unitary ring unless specified) and large categories like $\Psh (\cC)$ the category of presheaves of sets and  ${\rm Cat}$ the category of (small) categories and functors. 

Denote with slanted bold letters (large) \emph{2-categories}, like $\Cat$ the 2-category of categories and functors or $\Ex$ the category of abelian categories and exact functors; we also keep in slanted bold the \emph{2-functors}: for an essential review of 2-categories and 2-functors including \emph{2-representability} refer to \cite[B1.1]{El}. 

A \emph{directed graph } or \emph{quiver} or \emph{diagram scheme} \cite[\S 1.6]{Gr} or \emph{diagram}, for short, shall be denoted by $D$. A diagram is given by a set $D_0$ of vertices and a set $D_1$ of edges, plus mappings $d_0, d_1: D_1\to D_0$; for $\alpha\in\ D_1$ we say  that $d_0(\alpha)=s\in D_0$ is the source and $d_1(\alpha)=t\in D_0$ is the target and we shall denote the edge $\alpha: s\rightarrow t$ for short. Denote  $\Graph$ the category whose objects are diagrams $D = (D_0, D_1, d_0, d_1)$ and morphisms $\varphi : D = (D_0, D_1, d_0, d_1)\to D' = (D_0', D_1', d_0', d_1')$  are pairs $\varphi = (f_0, f_1)$ of mappings $f_0 : D_0\to D_0'$ and $f_1: D_1\to D_1'$ compatible with sources and targets, \ie $f_0d_0= d_0'f_1$ and $f_0d_1= d_1'f_1$. 

\section{Universal representations}
Consider a diagram $D$ and a (small) category $\cD$. Regarding $\cD$ as a diagram recall that a morphism of directed graphs $T: D \to \cD$ is also called a representation of $D$ in $\cD$. This $T$ is called a diagram in $\cD$ from the scheme $D$ in \cite[\S 1.6]{Gr}: for each vertex $s$ we have an object $T_s \in \cD$ and for each edge $\alpha: s\rightarrow t$, a morphism $T_\alpha :T_s \rightarrow T_t$ of $\cD$.
A morphism $f:T\rightarrow S$ between representations is a collection $\{f_s\}_{s\in D_0}$ of morphisms in $\cD$ with $f_s:T_s\rightarrow S_s$ such that, for every $\alpha\in D_1$, $\alpha:s\rightarrow t$, we have
$$
\xymatrix{T_s \ar[r]^{T_\alpha} \ar[d]_{f_s} & T_t \ar[d]^{f_t} \\ S_s \ar[r]_{S_\alpha} & S_t}
$$
a commutative square in the category $\cD$. Composition of morphisms is induced by that on $\cD$ glueing such commutative squares and the identity of a representation $T$ is the collection $id_T \df \{ id_s\}:T_s\rightarrow T_s$ of identities in $\cD$. 
The category of representations is the category whose objects are representations and morphisms are morphisms of representations. 

\subsection{The representation problem} 
Let $\Rep (D, \cD)$ be the category of  representations of $D$ in the category $\cD$ (this category is denoted $\cD^D$ in \cite[\S 1.6]{Gr}) and let $F: \cD\to \cD'$ be a functor. For $T\in \Rep (D, \cD)$ we get \emph{the image of $T$ under $F$}, denoted $FT\in \Rep(D, {\cD'})$,  by setting $FT_s \df F(T_s)$ and $FT_\alpha \df  F(T_\alpha) : 
F(T_s) \rightarrow F(T_t)$ in $\cD'$. Since $F$ is a functor it preserves commutative squares and therefore if  $f =\{f_s\}: T\to S$ is a morphism of representations in $\cD$ then $Ff = \{F(f_s)\}: FT\to FS$ is a morphism of representations in $\cD'$. We get an induced functor 
 $F: \Rep (D, \cD) \to \Rep(D, {\cD'})$ and if we have a natural transformation $\eta : F \implies G : \cD\to \cD'$ we clearly obtain an induced natural transformation $\eta : F \implies G : \Rep (D, \cD) \to \Rep(D, {\cD'})$. All this being (strictly) compatible with 1-composition and 2-composition yields a (strict) 2-functor $\Rep (D, - )$ from the 2-category of categories to itself.  
  
The representation problem is the problem of 2-representing the functor of representations over a restricted domain which we call \emph{domain of representability}. More specifically, consider a 2-functor $\Phi : \Dom \to \Cat$ and consider the restriction of representations as follows
$$
 \cD\leadsto \Rep (D, \Phi\cD): \Dom \to \Cat
 $$
 and call $\Phi$-representations the objects of $\Rep (D, \Phi\cD)$. 
Denote $$\Rep_{\Phi}(D)\df \Rep (D, \Phi) $$ the resulting composite 2-functor.
\begin{defn} We say that the diagram $D$ is \emph{representable in the domain} $\Dom$  if  $\Rep_{\Phi}(D)$ is representable. If  $(\cD_D, \Delta_\Phi)$ is representing $\Rep_{\Phi}(D)$ we say that $\cD_D$ is the \emph{universal domain category} and $\Delta_\Phi : D \to \Phi\cD_D$ the \emph{$\Phi$-universal representation} (such a pair is unique up to natural equivalence).
 \end{defn}
Actually, representability in the domain $\Dom$ can be translated, as usual, with the existence of the category $\cD_D\in  \Dom $  together with $$\eta_{\cD} :\Rep (D, \Phi\cD)\cong \Dom (\cD_D, \cD)$$ an equivalence of categories which is natural as $\cD$ varies, \ie for $F : \cD \to \cD'$ in $\Dom $ we have that 
 $$
\xymatrix{\Rep (D, \Phi\cD) \ar[r]^{\simeq}_{\eta_{\cD} } \ar[d]_{\Phi F} & \Dom (\cD_D, \cD)\ar[d]^{-\c F} \\ \Rep(D, \Phi\cD') \ar[r]^{\simeq}_{\eta_{\cD' }} & \Dom (\cD_D, \cD')}
$$
is commutative.
In particular, we get the universal $\Phi$-representation $\Delta_\Phi$ such that $\eta_{\cD_D}(\Delta_\Phi) = id_{\cD_D}$ is the identity functor of  $\cD_D$ provided with the usual universal property: 
$$\xymatrix{D \ar[r]^{\Delta_\Phi} \ar[d]_T & \Phi\cD_D  \ar@{.>}[dl]^{\Phi F_T} \\  \Phi \cD}$$
saying that for any other $\Phi$-representation $T$ there is a unique (up to natural equivalence) functor $F : \cD_D \to \cD$ in $\Dom$ such that $T$ factors through the universal $\Phi$-representation.\\ 

Clearly, any diagram $D$ is representable in the domain of all categories, \ie for $\Phi = id$ the identity functor,  since  $$\Rep (D, \cD)\cong \Cat (\bar{D}, \cD)$$ where $\cD_D \df \bar{D}$ here is the path or free category of a directed graph, as it is well known.  Recall that $\bar{D}$ is the category having for objects the vertices and for morphisms the finite strings of edges. This is a functor $D\leadsto \bar{D} : \Graph \to {\rm Cat}$ which is left adjoint to the forgetful functor ${\rm Cat} \to \Graph$, where $\Graph$ is the category of directed graphs.   If we are interested to keep some properties of more sophisticated diagrams we can impose them through a quotient of the path category.  For example, some diagram schemes with commutativity conditions are considered in \cite[\S 1.6-1.7]{Gr}.
\begin{defn} 
A decorated diagram $(D, \dag )$ with an extra set of data and a set $\dag$ of commutativity relations, a \emph{$\dag$-diagram} for short, is given by a set of distinguished vertices and edges and a set $\dag$ of parallel strings of edges of the path category. Denote $\Graph^\dag$ the category of $\dag$-diagrams where the $\dag$-morphisms are morphisms of  directed graphs preserving extra data and compatible with the sets $\dag$ of edges. 
 \end{defn}
Set $$D^\dag \df \bar D/\dag$$ for the quotient of the path category forcing the commutativity relations to become actual commutative diagrams in the quotient category. Consider $\dag$-categories, \ie categories with extra data and the set $\dag$ of commutativity conditions, and $\Cat^\dag$, the 2-category of $\dag$-categories with functors with extra data and preserving the commutativity conditions. Let $\Rep^\dag (D, \cD)\subseteq \Rep (D, \cD)$ be the subcategory of  $\dag$-representations in a $\dag$-category $\cD\in \Cat^\dag$,  \ie representations with extra data which preserve data and are taking the prescribed commutativity relations to commutative diagrams.  We thus obtain an equivalence 
$$\Rep^\dag (D, \cD)\cong \Cat^\dag (D^\dag, \cD)$$ 
by construction. The induced functor $(D, \dag)\leadsto D^\dag : \Graph^\dag \to {\rm Cat}^\dag$ is left adjoint to the forgetful  functor ${\rm Cat}^\dag \to \Graph^\dag$ between the underlying categories. For example, the decoration $\dag =\otimes$ is that of   $\Cat^\otimes$ the 2-category of tensor categories and tensor functors (see \cite{DT} and \cite[Chap.\,4]{KS} for tensor category theory) and $\Graph^\otimes$ is the category of $\otimes$-diagrams (see \cite[ Def.\,2.1]{BVHP} for the notion of tensor diagram and that of tensor representation).
Note that the free category $\bar D = D^\emptyset$ is obtained for  $D$ without extra data and $\dag =\emptyset$ the empty set of commutativity conditions and $\Rep^\emptyset (D, \cD) = \Rep (D, \cD)$. 
Consider a functor $\Phi : \Dom \to \Cat^\dag$,  the restriction of representations as above 
$$ \cD\leadsto \Rep^\dag (D, \Phi\cD): \Dom \to \Cat$$ and denote $\Rep_{\Phi}^\dag(D)\df \Rep^\dag (D, \Phi) $ the resulting composite 2-functor.
\begin{defn} We say that the decorated diagram $(D,\dag )$ is \emph{representable in the domain} $\Dom$  if  $\Rep_{\Phi}^\dag(D)$ is representable. If  $(\cD_D^\dag, \Delta_\Phi^\dag)$ is representing $\Rep_{\Phi}^\dag(D)$ we say that $\Delta_\Phi ^\dag: D \to \Phi\cD_D^\dag$ is the \emph{$\Phi$-universal $\dag$-representation}.
 \end{defn}
For such a decorated diagram $(D, \dag )$ denote $$\delta^\dag : D\to D^\dag$$ the image representation of the immersion $D\into \bar{D}$ under $\bar D\onto D^\dag$ the quotient functor. We have:
\begin{lemma} \label{domadj}
If  $\Phi :\Dom \to \Cat^\dag$ has a left adjoint $\Psi$ then $\cD_D^\dag\df \Psi (D^\dag)$ and
$$\xymatrix{D\ar[r]_{\delta^\dag} \ar@/^1.7pc/[rr]^{\Delta_\Phi^\dag} & D^\dag \ar[r]_{\xi_{D^\dag}\hspace{1.2cm}} & \Phi  \Psi (D^\dag) = \Phi\cD_D^\dag}$$
provided by the image of  $\delta^\dag$ under the unit $\xi_{D^\dag}$ of the adjunction, yields a $\Phi$-universal $\dag$-representation in the domain $\Dom$. 
\end{lemma}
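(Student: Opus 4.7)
The plan is to obtain the claimed representability by chaining two natural equivalences: the universal property of the $\dag$-quotient $D^\dag$ of the path category (which turns $\dag$-representations into $\dag$-functors), and the hypothesised adjunction $\Psi \dashv \Phi$. Both steps are already available from material just stated in the excerpt, so the proof should be essentially formal.

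First, I would fix $\cD \in \Dom$, so that $\Phi\cD \in \Cat^\dag$. By the universal property of $D^\dag$ recalled just before the statement, there is a natural equivalence
$$\Rep^\dag (D, \Phi\cD) \;\cong\; \Cat^\dag (D^\dag, \Phi\cD),$$
with a $\dag$-representation $T : D \to \Phi\cD$ corresponding to the unique $\dag$-functor $\tilde T : D^\dag \to \Phi\cD$ such that $\tilde T \circ \delta^\dag = T$. Now the adjunction $\Psi \dashv \Phi$ yields a natural equivalence
$$\Cat^\dag (D^\dag, \Phi\cD) \;\cong\; \Dom (\Psi(D^\dag), \cD) \;=\; \Dom (\cD_D^\dag, \cD),$$
sending $\tilde T$ to the functor $F_T : \cD_D^\dag \to \cD$ whose image under $\Phi$ satisfies $\Phi F_T \circ \xi_{D^\dag} = \tilde T$. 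Composing these two equivalences gives
$$\eta_\cD : \Rep^\dag (D, \Phi\cD) \;\cong\; \Dom (\cD_D^\dag, \cD),$$
natural in $\cD$ because each of the two steps is natural.

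To identify the universal $\dag$-representation, I would trace the identity functor. Set $\cD = \cD_D^\dag$. Under the adjunction, $id_{\cD_D^\dag}$ corresponds to the unit $\xi_{D^\dag} : D^\dag \to \Phi\cD_D^\dag$, and the latter corresponds under the first equivalence to the $\dag$-representation $\xi_{D^\dag} \circ \delta^\dag : D \to \Phi\cD_D^\dag$. This is precisely $\Delta_\Phi^\dag$ as defined in the statement. Unfolding the bijection then gives that for any $\dag$-representation $T : D \to \Phi\cD$ there is a (unique up to natural equivalence) $F_T \in \Dom (\cD_D^\dag, \cD)$ such that $\Phi F_T \circ \Delta_\Phi^\dag = T$, which is the stated factorisation property.

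There is no real obstacle here; the only thing to watch is strict compatibility between the two equivalences when they are assembled into a $2$-natural transformation, which amounts to the standard verification that both the universal property of a left adjoint and the universal property of the quotient $D^\dag$ are $2$-natural in the target category. Since $\Phi$ is a $2$-functor and the adjunction $\Psi \dashv \Phi$ is an adjunction of $2$-functors (or at least interacts correctly with natural transformations in $\cD$), the resulting $\eta_\cD$ is automatically natural in $\cD$, completing the verification that $(\cD_D^\dag, \Delta_\Phi^\dag)$ represents $\Rep^\dag_\Phi(D)$.
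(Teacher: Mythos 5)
Your proposal is correct and follows essentially the same route as the paper: compose the constructed equivalence $\Rep^\dag (D, \Phi\cD)\cong \Cat^\dag (D^\dag, \Phi\cD)$ with the adjunction equivalence $\Cat^\dag (D^\dag, \Phi\cD)\cong \Dom (\Psi(D^\dag), \cD)$ and set $\cD_D^\dag = \Psi(D^\dag)$. Your tracing of the identity functor to exhibit $\Delta_\Phi^\dag = \xi_{D^\dag}\circ\delta^\dag$ and the naturality check simply make explicit what the paper leaves implicit.
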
 
\begin{proof} In fact, the diagram $D$ is representable in the domain $\Dom$  if and only there is a natural equivalence
$$\Cat^\dag (D^\dag, \Phi\cD) \cong \Dom (\cD_D^\dag , \cD)$$ 
for $\cD\in \Dom$. If we have the adjoint  $\Psi : \Cat^\dag\to \Dom$ this can be fulfilled by defining $\cD_D^\dag\df \Psi (D^\dag)$. 
\end{proof}
Note that in the situation of Lemma \ref{domadj} we could have $\Cat^\dag = \Cat$ for $\dag\neq \emptyset$.
\begin{defn} \label{catdiag}
 Call $(D, \dag )$ a \emph{categorical diagram} if it is a $\dag$-diagram such that  $\Rep^\dag (D, - )\cong \Cat (D^\dag , - )$.
\end{defn}
 We then have:
 \begin{lemma} Let $(D, \dag )$ be a categorical diagram and $\Phi :\Dom \to \Cat$ with a left adjoint $\Psi$. Then 
$$\cD_D\df \Psi (\bar D)\onto \cD_D^\dag\df \Psi (D^\dag)$$ is a quotient. 
\end{lemma}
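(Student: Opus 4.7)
The plan is to recognise both $\cD_D$ and $\cD_D^\dag$ as the image under the left adjoint $\Psi$ of the canonical quotient $\bar D\onto D^\dag$ in $\Cat$, and then to invoke preservation of colimits by left adjoints.

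First, I will apply the argument of Lemma \ref{domadj} in two regimes. For the trivial decoration $\dag=\emptyset$ the pair $(D,\emptyset)$ is tautologically categorical with $D^\emptyset=\bar D$, since the free-category adjunction gives $\Rep(D,-)\cong \Cat(\bar D,-)$; combined with $\Psi\dashv\Phi$ this yields $\cD_D=\Psi(\bar D)$. For the given decoration, the categoricity hypothesis provides a natural equivalence
\[
\Rep^\dag_\Phi(D)=\Rep^\dag(D,\Phi(-))\cong \Cat(D^\dag,\Phi(-))\cong \Dom(\Psi(D^\dag),-),
\]
so the same formal reasoning as in Lemma \ref{domadj}, now with $\Cat$ in place of $\Cat^\dag$, identifies $\cD_D^\dag=\Psi(D^\dag)$.

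Next, I will observe that the projection $p:\bar D\onto D^\dag$ is by construction a quotient in $\Cat$: it is the identity on objects and surjective on hom-sets, and it realises $D^\dag$ as the coequalizer of the two canonical functors into $\bar D$ picking out the two sides of each parallel-string relation $\sigma\in\dag$. Because $\Psi$ is a left adjoint, it preserves this coequalizer, so $\Psi(p):\Psi(\bar D)\to \Psi(D^\dag)$ is a coequalizer in $\Dom$, and hence a quotient map in whichever concrete sense $\Dom$ admits (for instance a Serre quotient when $\Dom=\Ex$).

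Finally, a short diagram chase, using naturality of the adjunction and the universal factorisation defining the comparison functor $\cD_D\to\cD_D^\dag$ — obtained by factoring $\Delta^\dag_\Phi\circ\delta^\dag$ through the universal plain $\Phi$-representation $\Delta_\Phi$ — will identify this comparison with $\Psi(p)$, yielding the claim. The only real obstacle is keeping straight the interplay between the two universal properties; once that bookkeeping is set up, the result is a formal consequence of the adjunction and of the cocontinuity of $\Psi$.
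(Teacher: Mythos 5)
Your argument is correct and is essentially the paper's own proof: the paper simply notes that $\Psi$, being a left adjoint, preserves quotients, which is exactly your central step of presenting $\bar D\onto D^\dag$ as a quotient (coequalizer) in $\Cat$ and applying cocontinuity of $\Psi$. The preliminary identifications $\cD_D=\Psi(\bar D)$ and $\cD_D^\dag=\Psi(D^\dag)$ are already built into the statement via Lemma \ref{domadj}, so your extra bookkeeping is harmless elaboration rather than a divergence.
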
 
\begin{proof}
 In fact, $\Psi$ being a left adjoint, it preserves quotients.
\end{proof}
\begin{examples}\label{decex}
a) If  $D=\cC$ is the underlying directed graph of a category $\cC$ then let $(D, \circ)$ be the decorated diagram ``with identities''  and with all commutativity relations given by the composition in the category $\cC$. Then a $ \circ$-representation of $D$ in $\cD$ is a functor from $\cC$ to $\cD$. Therefore $D^\circ \cong \cC$ given by the quotient of $\bar{D}$ obtained by imposing all relations to become commutative diagrams and where $id _C = (e_v)$  is the empty path for every vertex $v=C\in \cC$. This $(D, \circ)$  is the prototype of a categorical diagram. 

b) Let $(D, \otimes)$ be a $\otimes$-diagram (see \cite[Def.\,2.1]{BVHP}). A $\otimes$-representation of $D$ in a tensor category $(\cD, \otimes)$  (see \cite[Def.\,2.7]{BVHP}) corresponds to a $\otimes$-functor $(D^\otimes, \otimes) \to (\cD, \otimes)$ and $\Rep^\otimes (D, - )\cong \Cat^\otimes (D^\otimes , - )$ where, clearly, $\Cat^\otimes\neq \Cat$. This  $(D, \otimes)$ is not a categorical diagram. 
\end{examples}
\subsection{Additive representations}
We are interested in forgetful functors $\Phi$ on domain categories $\Dom$ that factor through $\Add_R$ the category of $R$-linear additive categories and $R$-linear additive functors. These domain categories shall be determined by extra structures or properties of additive categories and functors $F : \cD\to \cD'$ in $\Dom$ shall be additive functors $\Phi F : \Phi \cD \to \Phi\cD'$ with extras and preserving these structures or properties. Actually, we can always assume such a factorisation, \cf  \cite[Lemma 1.4]{BVP}.
\begin{propose}\label{repadd}
 Any diagram is representable in the domain of ($R$-linear) additive (resp.\/ pseudo-abelian) categories and ($R$-linear) additive functors. \end{propose}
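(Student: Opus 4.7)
The plan is to apply Lemma \ref{domadj} to the forgetful 2-functor $\Phi : \Add_R \to \Cat$ (with $\dag=\emptyset$, so $D^\dag=\bar D$), reducing the statement to the construction of a left adjoint $\Psi : \Cat \to \Add_R$. One then sets $\cD_D \df \Psi(\bar D)$ and the universal additive representation is the image of the canonical $D \to \bar D$ under the unit $\xi_{\bar D}: \bar D \to \Phi \Psi(\bar D)$.

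The construction of $\Psi$ proceeds in two steps. First, given a (small) category $\cC$, form the $R$-\emph{linearisation} $R[\cC]$: same objects as $\cC$, with $\Hom_{R[\cC]}(X,Y) \df R[\Hom_\cC(X,Y)]$ the free $R$-module on the morphism set, composition extended $R$-bilinearly from composition in $\cC$. Second, form the additive envelope $\Psi(\cC)$ by formally adjoining finite direct sums: objects are finite tuples $(X_1,\dots,X_n)$ of objects of $R[\cC]$ and morphisms $(X_1,\dots,X_n)\to(Y_1,\dots,Y_m)$ are $m\times n$ matrices with entries in $R[\Hom_\cC(X_j,Y_i)]$, composition being matrix multiplication. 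By construction $\Psi(\cC)$ is $R$-linear additive and there is an obvious functor $\xi_\cC : \cC \to \Phi\Psi(\cC)$ sending each object to its length-one tuple and each morphism to itself viewed as an $R$-linear combination.

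To verify the adjunction $\Psi \dashv \Phi$, let $\cA\in \Add_R$ and $T : \cC \to \Phi\cA$. Any $R$-linear additive $\tilde T : \Psi(\cC)\to \cA$ is forced on objects by $\tilde T(X_1,\dots,X_n) = \bigoplus_i T(X_i)$ and on morphisms by $R$-linearity and matrix decomposition in terms of the component morphisms, so $\tilde T$ exists and is unique with $\tilde T \circ \xi_\cC = T$; naturality in $\cC$ and $\cA$ is automatic. This gives $\Cat(\cC,\Phi\cA)\cong \Add_R(\Psi(\cC),\cA)$, and Lemma \ref{domadj} then produces the universal additive representation $\Delta_\Phi : D \to \Phi\cD_D$ with $\cD_D = \Psi(\bar D)$.

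For the pseudo-abelian variant, one composes with the Karoubi envelope $K$, which is well known to be left adjoint to the inclusion of pseudo-abelian $R$-linear categories into $\Add_R$; thus $K\circ \Psi$ is left adjoint to the composite forgetful functor, and a second application of Lemma \ref{domadj} yields a pseudo-abelian universal representation $D\to K(\Psi(\bar D))$. There is no real obstacle here: everything is a direct unravelling of free constructions. The one point to be careful about is a size check---since $D$ is a small diagram, $\bar D$ is a small category and hence $\Psi(\bar D)$ and $K(\Psi(\bar D))$ are small, so the representability claim stays within the intended universe.
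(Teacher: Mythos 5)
Your proof is correct and follows the same route as the paper: apply Lemma \ref{domadj} to the forgetful functor $\Phi:\Add_R\to\Cat$ using the left adjoint $\cC\leadsto R\cC^{+}$ (the $R$-linear enrichment followed by the additive envelope), then handle the pseudo-abelian case by composing with the Karoubi completion. The only difference is that you spell out explicitly the matrix construction of the additive envelope and the adjunction check, which the paper treats as well known.
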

\begin{proof} We just apply Lemma \ref{domadj} as for $\Dom = \Add_R$ the forgetful functor $\Phi: \Add_R \to \Cat$ is provided with a left adjoint $\Psi$. It is well known that we can form the pre-additive $R$-linear enrichment $R\cC$ and the additive envelope $R\cC^+$ of any category $\cC$, so that $\Psi (\cC) \df R\cC^+$. Applying this to the path category $\cC= \bar{D}$ of a diagram $D$ we obtain  $$\Delta^+: D\to R\bar{D}^+$$ the universal $R$-linear additive representation. Finally, we can form the pseudo-abelian completion of an additive category and again applying Lemma \ref{domadj} we are done.
\end{proof} 
\begin{example}\label{pointR}
For $D=\{*\}$ the singleton one vertex diagram we get $\bar{D} ={\bf 1}$ the one point category, $R{\bf 1}=R$ is the ring as a preadditive $R$-linear category and $R\bar{D}^+=R^+$ the additive completion of $R$. The category $R^+$ is equivalent to the full subcategory of $\Rmod$ given by finitely generated free $R$-modules. The representation $\Delta^+: \{*\}\to R^+$ sends $\ast$ to $R$. 
\end{example}
\begin{remark}
Recall that, even in the non additive case, we always can represent any diagram $D$ in the domain of the  categories where every idempotent is a split idempotent ${\sf Idem}$ (also called idempotent complete or Cauchy complete or Karoubian, see \cite[A1.1]{El}). As ${\sf Idem} \to \Cat$ has a left adjoint we can apply the same argument as in Proposition \ref{repadd}. In fact, for any category $\cC$ we can always form the idempotent completion $\cC^{\rm idem}$ in such a way that $\Psh (\cC)=\Psh (\cC^{\rm idem})$ and we can recover $\cC^{\rm idem}$ as those presheaves which are retracts of representables. Actually, $\cC$ can be recovered as $(\Ind \cC)^{\rm fp}$, \ie as the compact or finite presentation objects of $\Ind \cC$, see \cite[Def.\,6.3.3]{KS}, if and only if $\cC$ is idempotent
complete, see \cite[Ex.\,6.1]{KS}.
\end{remark}

\subsection{Abelian representations}
A key fact that we recall here is Freyd's free abelian category constructed out of any additive category, see Freyd \cite[\S 4.1]{Fr} and Prest \cite[Thm.\,4.3]{P}. See also Krause \cite[\S 11-12]{KH}. We consider its $R$-linear variant following \cite[\S 1]{BVP}.  Let $\cR$ be an $R$-linear additive category. Denote $\cR\mbox{-}{\rm Mod} \df \Add_R (\cR,R\mbox{-}{\rm Mod})$ (as usual, see \cite[Chap.\,2]{P}) the big category of $R$-linear additive functors regarded as $\cR$-modules along with the contravariant Yoneda embedding $\cR^{op}\into \cR\mbox{-}{\rm Mod}$. We may look at ``internal $\cR$-modules'' defined as 
$$\cR\mbox{-}{\rm Mod}(\cA) := \Add_R(\cR, \cA)$$ where $\cA$ is an (essentially small) $R$-linear additive category.   Actually, any object of an additive category $\cA$ is a commutative group object in $\cA$ and for $\cR = R$ (by abuse of notation, it should be $R^+$, \cf Example \ref{pointR}) we get $\text{$R$-Mod }(\cA) =\cA$, \ie  $\Add_R(R^+, \cA)\cong \cA$, indeed. By the way $$\cR\mbox{-}{\rm Mod}(-) : \Dom \to \Cat$$ is a canonical 2-functor to ask for representability over a suitable domain ${\sf Dom}$. We also have the full subcategory $\cR\mbox{-}{\rm mod}\df \cR\mbox{-}{\rm Mod}^{\fp}$ determined by finite presentation $\cR$-modules, \ie cokernels of representables, (see \cite[Chap.\,1]{P}) and note that Yoneda yields $\cR \into \cR\mbox{-}{\rm mod}^{op}$.  Let $\Lex_R$ be the 2-category of (essentially small) $R$-linear left exact additive categories and $R$-linear left exact functors.
\begin{lemma}\label{lexrmod}
The 2-functor  $\cR\mbox{-}{\rm Mod}(-)$ is representable  by $\cR \into \cR\mbox{-}{\rm mod}^{op}$ on  $\Dom =\Lex_R$, \ie there is an equivalence $\cR\mbox{-}{\rm Mod}(\cA) \cong {\sf Lex}_R (\cR\mbox{-}{\rm mod}^{\rm op}, \cA)$ which is
natural in $\cA$, $\cA \in \Lex_R$.
\end{lemma}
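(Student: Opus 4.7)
The plan is to exhibit the equivalence by sending a left exact $R$-linear functor on $\cR\text{-mod}^{op}$ to its restriction along the Yoneda embedding $\cR \hookrightarrow \cR\text{-mod}^{op}$, and to construct an inverse that extends $F \in \Add_R(\cR, \cA)$ to a left exact functor $\tilde F : \cR\text{-mod}^{op} \to \cA$. The key structural fact is that $\cR\text{-mod}^{op}$ is the free $R$-linear completion of $\cR$ by kernels: dually, every object of $\cR\text{-mod}$ is, by definition, a cokernel $\cR(A, -) \to \cR(B, -) \to M \to 0$ of a map between representables induced by a morphism $g : B \to A$ of $\cR$; taking opposites, every object of $\cR\text{-mod}^{op}$ appears as a kernel of a morphism $F(B) \to F(A)$ coming from $\cR$.

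First, I would set up the restriction functor $\rho : \Lex_R(\cR\text{-mod}^{op}, \cA) \to \Add_R(\cR, \cA)$ given by precomposition with Yoneda. This is well-defined since Yoneda is $R$-linear and additive, and clearly natural in $\cA$. Next, I would construct the extension functor $\varepsilon$ in the other direction. Given $F : \cR \to \cA$ in $\Add_R$ and a finitely presented $\cR$-module $M$ with presentation $\cR(A,-) \xrightarrow{g^*} \cR(B, -) \to M \to 0$ (corresponding to $g : B \to A$ in $\cR$), define $\tilde F(M^{op}) := \ker\bigl(F(g) : F(B) \to F(A)\bigr)$ in $\cA$, which exists because $\cA \in \Lex_R$. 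For a morphism in $\cR\text{-mod}^{op}$, i.e.\ a morphism $M \to M'$ in $\cR\text{-mod}$, lift it to a map of presentations (possible since representables are projective in $\cR\text{-mod}$ and every finitely presented $\cR$-module has a presentation by representables), apply $F$, and take the induced map on kernels.

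The main technical obstacle is establishing well-definedness: independence of the choice of presentation and of the chosen lift of a morphism to presentations. For this I would invoke the standard argument that any two presentations of $M$ are connected by a morphism of presentations which is unique up to chain homotopy in the representables, and use the additivity of $F$ to check that the two resulting kernels in $\cA$ are canonically isomorphic. An entirely analogous argument shows functoriality of $\tilde F$. Left exactness of $\tilde F$ follows from a diagram chase once one observes that short exact sequences in $\cR\text{-mod}$ lift, after suitable resolution, to short exact sequences of presentations by representables in $\cR$, whereupon the result is reduced to the fact that $F$ preserves biproducts and $\cA$ has kernels. $R$-linearity of $\tilde F$ is inherited from that of $F$.

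Finally I would check that $\rho \circ \varepsilon \simeq \mathrm{id}$, which is essentially tautological since the representable $\cR(X,-)$ admits the presentation $0 \to \cR(X,-) \to \cR(X,-)$, giving $\tilde F(\cR(X,-)^{op}) = \ker(0 \to F(X)) = F(X)$; and $\varepsilon \circ \rho \simeq \mathrm{id}$, which follows from the left exactness of any $G \in \Lex_R(\cR\text{-mod}^{op}, \cA)$ applied to the defining presentation of each $M$. Naturality in $\cA$ under left exact functors is immediate from the universal property of kernels. The upshot is the claimed 2-natural equivalence, with the Yoneda embedding $\cR \hookrightarrow \cR\text{-mod}^{op}$ serving as the universal $R$-linear additive representation of $\cR$ in a left exact $R$-linear category.
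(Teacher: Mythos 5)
Your proposal is correct and follows essentially the same route as the paper: the paper's proof consists precisely of the observation that $\cR\mbox{-}{\rm mod}$ has cokernels and that any $R$-linear additive functor $\cR\to\cA$ lifts, uniquely up to natural isomorphism, to a left exact functor on $\cR\mbox{-}{\rm mod}^{\rm op}$, citing \cite[\S 1]{BVP} and the proof of \cite[Thm.\,4.3]{P} for the details. What you have written out (restriction along Yoneda, extension by taking kernels of $F(g)$ over chosen presentations, well-definedness via projectivity of representables and homotopies, and the two composite isomorphisms) is exactly the standard argument those references supply.
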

\begin{proof} In fact, we have that $\cR\mbox{-}{\rm mod}$ is right exact, i.e. it has cokernels, and any additive functor from $\cR$ to $\cA$ can be lifted (uniquely-up-to-equivalence) to a left exact functor from $\cR\mbox{-}{\rm mod}^{\rm op}$ to $\cA$ as it easily follows from the arguments in \cite[\S 1]{BVP} or the proof of  \cite[Thm.\,4.3]{P}.
\end{proof}
Thus $\cR \into \cR\mbox{-}{\rm mod}^{op}$ is the universal $\cR$-module in $R$-linear left exact additive categories. Applying the previous Lemma twice one obtains the following, see  \cite[\S 1]{BVP} or \cite[Thm.\,4.1, Cor.\,4.2 \& 4.3]{P}.
\begin{thm}[Universal $\cR$-module]\label{Freyd}
There is an abelian category $\Ab_R(\cR)$ and an additive functor $\cR\to \Ab_R(\cR)$  representing the 2-functor $\cR\mbox{-}{\rm Mod}(-)$ on  $\Dom =\Ex_R$ the 2-category of (essentially small) $R$-linear abelian categories and $R$-linear exact functors. Moreover, the $R$-linear additive fully faithful functor $|\cR|$ induced by the Yoneda embeddings
$$|\cR|: \cR\into \cR\mbox{-}{\rm mod}\mbox{-}{\rm mod} = \Add_R (\Add_R (\cR,R\mbox{-}{\rm Mod})^{\rm fp}, R\mbox{-}{\rm Mod})^{\rm fp}\cong \Ab_R(\cR)$$
is representing $\cR\mbox{-}{\rm Mod}(-)$. 
\end{thm}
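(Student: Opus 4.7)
The proof will proceed by applying Lemma \ref{lexrmod} twice, carefully tracking opposites.

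First, since any $R$-linear abelian $\cA$ is in particular left exact, Lemma \ref{lexrmod} already supplies a natural equivalence
$$\cR\mbox{-}{\rm Mod}(\cA) = \Add_R(\cR, \cA) \cong \Lex_R(\cR\mbox{-}{\rm mod}^{op}, \cA).$$
Here $\cR\mbox{-}{\rm mod}^{op}$ is left exact but in general not abelian, so this step alone yields only left-exact functors and is not yet of the desired form.

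Second, I would apply Lemma \ref{lexrmod} to the $R$-linear additive category $\cR\mbox{-}{\rm mod}$ itself: for any $\cB \in \Lex_R$,
$$\Add_R(\cR\mbox{-}{\rm mod}, \cB) \cong \Lex_R(\cR\mbox{-}{\rm mod}\mbox{-}{\rm mod}^{op}, \cB).$$
Taking $\cB = \cA^{op}$ (abelian, hence left exact) and dualising via $\Lex_R(\cD^{op}, \cA^{op}) \cong \operatorname{Rex}_R(\cD, \cA)$ together with the tautological $\Add_R(\cR\mbox{-}{\rm mod}, \cA^{op}) \cong \Add_R(\cR\mbox{-}{\rm mod}^{op}, \cA)$, I obtain
$$\Add_R(\cR\mbox{-}{\rm mod}^{op}, \cA) \cong \operatorname{Rex}_R(\cR\mbox{-}{\rm mod}\mbox{-}{\rm mod}, \cA).$$
Restricting on the left to \emph{left-exact} additive functors and verifying that the Yoneda embedding $\cR\mbox{-}{\rm mod}^{op} \into \cR\mbox{-}{\rm mod}\mbox{-}{\rm mod}$ transports this condition over to left-exactness of the extension gives
$$\Lex_R(\cR\mbox{-}{\rm mod}^{op}, \cA) \cong \Ex_R(\cR\mbox{-}{\rm mod}\mbox{-}{\rm mod}, \cA).$$
Chaining with the first equivalence and setting $\Ab_R(\cR) \df \cR\mbox{-}{\rm mod}\mbox{-}{\rm mod}$ yields the desired natural isomorphism $\cR\mbox{-}{\rm Mod}(\cA) \cong \Ex_R(\Ab_R(\cR), \cA)$, with naturality in $\cA$ transferred directly from Lemma \ref{lexrmod}. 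The asserted $|\cR|$ is then the composite of the two Yoneda embeddings, corresponds to the identity of $\Ab_R(\cR)$ under the equivalence, and is fully faithful by Yoneda's lemma applied twice.

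The main obstacle is the last step above: certifying that $\cR\mbox{-}{\rm mod}\mbox{-}{\rm mod}$ is genuinely abelian and that left-exactness of a functor on $\cR\mbox{-}{\rm mod}^{op}$ upgrades to full exactness on $\cR\mbox{-}{\rm mod}\mbox{-}{\rm mod}$. This is Freyd's classical observation and relies on $\cR\mbox{-}{\rm mod}$ admitting pseudo-kernels inherited from the ambient abelian $\cR\mbox{-}{\rm Mod}$: the kernel in $\cR\mbox{-}{\rm Mod}$ of a map between finitely presented modules may not itself be finitely presented, but can be surjected onto by one. This suffices to endow $\cR\mbox{-}{\rm mod}\mbox{-}{\rm mod}$ with kernels (making it abelian alongside its automatic cokernels), and to ensure that these kernels are preserved by the extension of any left-exact functor on $\cR\mbox{-}{\rm mod}^{op}$, turning a right-exact extension into a fully exact one.
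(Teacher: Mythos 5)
Your overall route --- two applications of Lemma \ref{lexrmod} plus dualisation, taking $\Ab_R(\cR)\df \cR\mbox{-}{\rm mod}\mbox{-}{\rm mod}$ and $|\cR|$ the composite of the two Yoneda embeddings --- is exactly what the paper intends: its proof is the one-line ``apply the previous Lemma twice'', delegating the substance to \cite[\S 1]{BVP} and \cite[Thm.\,4.3]{P}. Your first two displayed equivalences are correct. The problem lies in how you resolve what you correctly single out as the main obstacle (abelianness of $\cR\mbox{-}{\rm mod}\mbox{-}{\rm mod}$ and the upgrade from right exact to exact).

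Your justification is wrong on two counts. First, the assertion that the kernel in $\cR\mbox{-}{\rm Mod}$ of a map between finitely presented objects ``can be surjected onto by'' a finitely presented one is false in general: for $\cR=R$ it says that the kernel of a map between finitely presented modules is finitely generated, which is precisely coherence of $R$; equivalently, $\Rmod$ has pseudo-kernels if and only if $R$ is coherent. If the construction genuinely required pseudo-kernels in $\cR\mbox{-}{\rm mod}$, Theorem \ref{Freyd} would fail for non-coherent rings, whereas it holds for arbitrary $\cR$. Second, and fortunately, pseudo-kernels in $\cR\mbox{-}{\rm mod}$ are not what is needed: you have the variance backwards. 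For a small additive category $\cD$, the category $\Add_R(\cD, R\mbox{-}{\rm Mod})^{\rm fp}$ is abelian provided $\cD$ has weak \emph{cokernels}, and $\cD=\cR\mbox{-}{\rm mod}$ has genuine cokernels for every $\cR$ (as recalled in the proof of Lemma \ref{lexrmod}). Concretely, the kernel of the map $(F,-)\to(G,-)$ of representables induced by $\alpha:G\to F$ in $\cR\mbox{-}{\rm mod}$ is again representable, namely $(\coker\alpha,-)$; this, together with the projectivity of representables and a diagram chase, is what makes $\cR\mbox{-}{\rm mod}\mbox{-}{\rm mod}$ abelian and shows that the right-exact extension of a functor which is left exact on $\cR\mbox{-}{\rm mod}^{op}$ preserves kernels, hence is exact. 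That is Freyd's argument, as in \cite[Thm.\,4.3]{P} and \cite[\S 1]{BVP}; with your last paragraph replaced by it, your proof coincides with the paper's.
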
  
For any category $\cC$, considering the $R$-linear additive completion $R\cC^+$ of $\cC$, as explained in the proof of Proposition \ref{repadd}, we see that $\Ab_R(R\cC^+)$ is Freyd's universal abelian category of $\cC$. 
\begin{defn} \label{hieraticdef} Let $\cR=R^+$ be the additive category generated by the point category and $R\mbox{-}{\rm mod} \cong \Add_R (R^+,R\mbox{-}{\rm Mod})^{\fp}$ the finitely presented $R$-modules. 
We shall refer to a \emph{hieratic $R$-module} for an object of $\iRMod\df\Add_R (R\mbox{-}{\rm mod}, R\mbox{-}{\rm Mod})$ and \emph{finitely presented hieratic $R$-module} for an object of 
$$\iRmod\df \iRMod^{\fp} = \Add_R (R\mbox{-}{\rm mod}, R\mbox{-}{\rm Mod})^{\fp}\cong \Ab_R(R^+)$$
(this convention is also providing $\iRmod = R^+\mbox{-}{\rm mod}\mbox{-}{\rm mod}$ simplifying the notation).
\end{defn}
The Theorem \ref{Freyd} means that the double Yoneda $|\cR| \in\cR\mbox{-}{\rm Mod}(\Ab_R(\cR))$ is the universal $\cR$-module in abelian categories: every $\cR$-module $M\in \cR\mbox{-}{\rm Mod}(\cA)$ is the image of $|\cR|$ under an exact functor $F_M: \Ab_R(\cR)\to \cA$. 
Note that if  $\cA$ is an abelian subcategory of $\Ab_R(\cR)$ which contains the image of $|\cR|$ then the inclusion of $\cA$ in $\Ab_R(\cR)$ is an equivalence (see \cite[Lemma 4.12]{P}). Actually, equivalently,  we see that the canonical  forgetful functor $$\Phi_R : \Ex_R \to \Add_R$$ has a left adjoint $\Psi_R: \Add_R\to \Ex_R$, where $\Psi_R (\cR)\df \Ab_R(\cR)$ is Freyd's $R$-linear abelian category. 
  We then have the following, \cf \cite{BVP}:
\begin{thm}[Universal abelian representation] \label{unrep}
Any diagram is representable in the domain of $R$-linear abelian categories.\end{thm}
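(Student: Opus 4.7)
The plan is to apply Lemma \ref{domadj} to the forgetful 2-functor $\Phi_R : \Ex_R \to \Cat$, observing that by chaining Proposition \ref{repadd} with Theorem \ref{Freyd} this forgetful functor admits a left adjoint. Concretely, I would identify the universal domain category as
$$\cD_D \df \Ab_R(R\bar{D}^+),$$
and the universal abelian representation as the composite of $\Delta^+ : D \to R\bar{D}^+$ from Proposition \ref{repadd} with the double Yoneda $|R\bar{D}^+| : R\bar{D}^+ \into \Ab_R(R\bar{D}^+)$ of Theorem \ref{Freyd}.

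I would establish the representability as two natural equivalences composed. First, Proposition \ref{repadd} supplies the natural equivalence
$$\Rep(D, \Phi_R\cA) \cong \Add_R(R\bar{D}^+, \Phi_R\cA)$$
for every $\cA \in \Ex_R$, since $R\bar{D}^+$ is the universal $R$-linear additive representation of $D$. Second, the remark preceding the theorem (a reformulation of Theorem \ref{Freyd}) shows that $\Phi_R : \Ex_R \to \Add_R$ admits a left adjoint $\Psi_R = \Ab_R(-)$, yielding
$$\Add_R(R\bar{D}^+, \Phi_R\cA) \cong \Ex_R(\Ab_R(R\bar{D}^+), \cA)$$
naturally in $\cA$. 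Composing the two equivalences gives $\Rep(D, \Phi_R\cA) \cong \Ex_R(\cD_D, \cA)$, which is exactly the representability statement.

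No genuine obstacle remains at this stage: the substantive work has been absorbed into Freyd's construction (Theorem \ref{Freyd}) and the additive envelope (Proposition \ref{repadd}), and what is left is the formal composition of adjoints. Equivalently, one may apply Lemma \ref{domadj} a single time to the composite left adjoint $D \leadsto \Ab_R(R\bar{D}^+)$ of the forgetful functor $\Ex_R \to \Graph$, and read off the universal $\Phi_R$-representation as the image of $\delta^\circ : D \to \bar{D}$ under the composite unit. The uniqueness up to natural equivalence of $(\cD_D, \Delta)$ follows, as always, from the 2-Yoneda lemma applied to the displayed natural equivalence.
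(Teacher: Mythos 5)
Your proposal is correct and follows essentially the same route as the paper: the paper's proof also obtains $\cD_D = \Ab_R(R\bar{D}^+)$ by combining Lemma \ref{domadj}, the additive envelope of Proposition \ref{repadd}, and Freyd's adjoint $\Psi_R = \Ab_R(-)$ from Theorem \ref{Freyd}, with the universal representation given as the image of $\Delta^+$ under the canonical embedding $R\bar{D}^+\to \Ab_R(R\bar{D}^+)$. You merely make explicit the composition of the two natural equivalences that the paper leaves implicit.
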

\begin{proof} We see that $\cD_D\df \Ab_R (R\bar{D}^+)\in \Dom =  \Ex_R$ is the universal abelian category by Theorem \ref{Freyd}, the proof of Proposition \ref{repadd} and Lemma \ref{domadj}.\end{proof}
We obtain the universal abelian representation $\Delta$ of  $D$ in $\Ab_R (R\bar{D}^+)$ as the image of $\Delta^+$ (in the proof of Proposition \ref{repadd}) under $R\bar{D}^+\to \Ab_R (R\bar{D}^+)$ the canonical $R$-linear embedding. 
\begin{defn}\label{unrepdef}
For a diagram $D$ we shall denote 
$$\Delta : D \to  \Ab_R(D)\df \Ab_R(R\bar{D}^+)$$ the \emph{universal $R$-linear abelian representation}. For $R=\Z$ we simply write $\Ab (D)$ omitting reference to $R$. For $D =\{*\}$ the singleton one vertex diagram we shall denote 
$$\Ab_R\df \Ab_R(\{*\})\cong \iRmod$$ identified with the category of finitely presented hieratic $R$-modules, see Definition \ref{hieraticdef}, \ie Freyd's free $R$-linear abelian category of the ring $R$. Let $|R|$ be the \emph{universal hieratic $R$-module} corresponding to $\Delta (*)$  and given by the image of $R$ under the double Yoneda $R^+\to \iRmod$. 
\end{defn} 
Note that $\Ab_R(D)$ has the same size of $D$ by construction. For any $R$-linear abelian category $\cA$  we then have a natural equivalence
$$T\leadsto F_T : \Rep (D, \cA)\cong \Ex_R (\Ab_R(D), \cA)$$
where $\Delta$ corresponds to the identity, \ie $F_\Delta = id$. In particular, for $D =\{*\}$, any  $A\in \cA=\text{$R$-Mod }(\cA)$ yields a unique (non necessarily faithful!) $F_A: \Ab_R\to \cA$ such that $F_A(|R|) = A$.  Under the equivalence $\Ab_R\cong \iRmod$, mentioned in Definition \ref{unrepdef}, we shall denote $$r_A: \iRmod\to \cA$$ the $R$-linear exact functor $F_A$ induced by an object $A\in \cA$. 

We also have that $\Ab_R(-)$ is functorial on diagrams and the previous equivalence is functorial in both variables.

Note that we have two canonical associated abelian categories along with exact 
embeddings
$${\rm Sim}^\oplus \Ab_R(D)\into \Ab_R(D)\into \Ind \Ab_R(D)$$
where:
\begin{itemize}
\item[-] ${\rm Sim}^\oplus \Ab_R(D)$ is the abelian full subcategory of semisimple objects of $ \Ab_R(D)$ which is also the thick subcategory 
generated by the simple objects ${\rm Sim} \Ab_R(D)$ (\cf \cite[Def.\,8.3.16-8.3.21 ]{KS}) and
\item[-] $\Ind \Ab_R(D)$ is the indization of $\Ab_R(D)$ which is a Grothendieck $R$-linear category (\cf \cite[Def.\,8.3.24 \& Thm.\,8.6.5]{KS}). 
 \end{itemize}
\begin{propose}\label{indadjoint}
For an exact functor $F : \cA \to \cB$ from an (essentially small) abelian category $\cA$ to a Grothendieck category $\cB$ there exists $\Ind \cA\to \cB$ a unique (up to unique isomorphism) extension   along $\cA\into \Ind \cA$ which is exact and preserving (small) filtered colimits \ie $\cA\into \Ind \cA$ is 2-universal in Grothendieck categories.  
\end{propose}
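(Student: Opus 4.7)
The plan is to invoke the standard universal property of the ind-completion to obtain a cocontinuous extension, and then upgrade that extension from ``cocontinuous'' to ``exact'' using the Grothendieck AB5 property of $\cB$. Throughout I would use only that $\cB$ is cocomplete, abelian, and has exact small filtered colimits (the existence of a generator will not be needed).

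First, since $\cB$ admits small filtered colimits, the universal property of $\Ind \cA$ as the free cocompletion of $\cA$ under small filtered colimits (see \cite[Prop.\,6.3.1 \& Prop.\,6.3.2]{KS}) produces a unique (up to unique isomorphism) functor $\tilde F : \Ind \cA \to \cB$ extending $F$ along the Yoneda embedding $\cA \into \Ind \cA$ and preserving small filtered colimits. Explicitly, for any filtered presentation $X = \colim_{i} X_i$ with $X_i\in \cA$ one sets $\tilde F(X) \df \colim_i F(X_i)$ in $\cB$, and functoriality on morphisms follows from the standard calculation of $\Ind \cA (X, Y)$.

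The heart of the argument is then the exactness of $\tilde F$. Additivity is immediate since $F$ is additive and $\cA \into \Ind \cA$ preserves finite biproducts. For a short exact sequence $0 \to X' \to X \to X'' \to 0$ in $\Ind \cA$, I would appeal to the structural fact that any such sequence arises as a small filtered colimit of short exact sequences $0 \to X'_i \to X_i \to X''_i \to 0$ with all terms in $\cA$ (see \cite[Thm.\,8.6.5]{KS}). Applying the exact functor $F$ to each gives a filtered system of short exact sequences in $\cB$; then the AB5 property of $\cB$ says that passing to the colimit preserves the exactness, yielding precisely the sequence $0 \to \tilde F(X') \to \tilde F(X) \to \tilde F(X'') \to 0$.

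The main technical obstacle is this last structural presentation of short exact sequences in $\Ind \cA$: a compatible system of morphisms $X'_i \to X_i$ in a filtered diagram need not be termwise monic even when its colimit is, so one must replace the given diagram by a suitable cofinal subdiagram, using left-exactness of filtered colimits in $\Ind \cA$ to keep the quotient terms in $\cA$ as well. Once this is in hand, the uniqueness of $\tilde F$ as an exact cocontinuous extension follows immediately from the uniqueness in the free cocompletion, since any such $\tilde F'$ must coincide with $\tilde F$ on $\cA$ and commute with filtered colimits. Applying the same argument to 1-cells and 2-cells of candidate extensions then delivers the asserted 2-universality of $\cA \into \Ind \cA$ in Grothendieck categories with exact, filtered-colimit-preserving functors.
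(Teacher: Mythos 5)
Your proposal is correct, but it follows a different decomposition than the paper. The paper does not verify exactness of the extension directly: it defines the extension as the composite $L\circ \Ind F$, where $\Ind F:\Ind\cA\to\Ind\cB$ is exact because $F$ is (citing Kashiwara--Schapira), and $L:\Ind\cB\to\cB$ is the left adjoint/left inverse of $\cB\into\Ind\cB$ sending a formal filtered colimit to the actual one, which is exact and filtered-colimit-preserving precisely because filtered colimits in $\cB$ are exact. You instead build the extension $\tilde F$ directly from the free-cocompletion property of $\Ind\cA$ and prove exactness by hand, presenting a short exact sequence of $\Ind\cA$ as a filtered colimit of short exact sequences with terms in $\cA$ and then invoking AB5 in $\cB$. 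Both arguments hinge on the same two inputs (the Kashiwara--Schapira structure theory of $\Ind$ of an abelian category, and exactness of filtered colimits in $\cB$), and your treatment of uniqueness and 2-universality via the cocompletion property matches the intended conclusion. The trade-off: your route must confront the presentation lemma for short exact sequences in $\Ind\cA$ --- the cofinality/termwise-mono issue you correctly flag --- which in Kashiwara--Schapira is Prop.~8.6.6~a) rather than Thm.~8.6.5 as you cite (the latter gives abelianness of $\Ind\cA$ and exactness of its filtered colimits); the paper's route outsources exactly this difficulty to the quoted exactness of $\Ind F$ and of $L$, at the price of passing through the auxiliary category $\Ind\cB$, which your argument avoids entirely. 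Either way the statement is established; your version is somewhat more self-contained in spirit, the paper's is shorter by citation.
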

\begin{proof} Well known: a proof can be extracted from \cite[I Prop.\,8.7.3 \& \S 8.9]{sga4} or \cite[Prop.\,6.1.9, Prop.\,6.3.1, Cor.\,6.3.2, Prop.\,8.6.6 \& Cor.\,8.6.8]{KS}. 
In fact,  if  $F : \cA \to \cB$ is exact then $\Ind F : \Ind \cA \to \Ind \cB$ is exact and if $\cB$ is a cocomplete abelian category the exact embedding $\cB \into \Ind \cB$ has a left adjoint/inverse 
$$L: \limdir{ }\leadsto \limd{ } : \Ind \cB\to \cB$$ 
in such a way that, firstly,  we obtain $F$ as the composition of $\cA\into \Ind \cA$ with $L\circ \Ind F : \Ind \cA\to \cB$ and, secondly, this latter $L\circ \Ind F$ commutes with filtered colimits and is exact, since in $\cB$ the filtered colimits are exact.
\end{proof}
Note that $\Ind \cR\mbox{-}{\rm mod} \cong \cR\mbox{-}{\rm Mod}$ (by Lemma \ref{lexrmod} see also \cite[Thm.\,11.1.15]{KH} and \cite[Cor.\,3.3 \& 3.8]{P}) and 
$\Ind \iRmod\cong \iRMod$ is the Grothendieck category of hieratic $R$-modules. 
\begin{cor} \label{indrep}
For any $R$-linear Grothendieck category $\cA$  we then have a natural equivalence between 
$ \Rep (D, \cA)$ and the category of exact functors preserving filtered colimits from 
$\Ind \Ab_R(D)$ to $\cA$; the image of $\Delta$ under $\Ab_R(D)\into \Ind \Ab_R(D)$ yields 
$$\Ind \Delta : D \to  \Ind \Ab_R(D)$$
the universal Grothendieck representation. 
\end{cor}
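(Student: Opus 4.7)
The plan is to obtain this as a direct composition of two natural equivalences already established: the universal abelian representation of Theorem \ref{unrep} and the extension property of Proposition \ref{indadjoint}. Since neither step involves a new construction, this is essentially a bookkeeping argument, and the only point to be checked is that the composite equivalence behaves well on the distinguished object $\Delta$.

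First, since any $R$-linear Grothendieck category $\cA$ is in particular an $R$-linear abelian category, Theorem \ref{unrep} already yields a natural equivalence
$$\Rep(D, \cA) \cong \Ex_R(\Ab_R(D), \cA)$$
under which the universal representation $\Delta: D \to \Ab_R(D)$ corresponds to the identity functor $\mathrm{id}_{\Ab_R(D)}$. Second, $\Ab_R(D)$ is essentially small (of the same size as $D$, by construction), so Proposition \ref{indadjoint} applies: restriction along $\Ab_R(D) \into \Ind\Ab_R(D)$ and the unique extension $F \leadsto L \circ \Ind F$ are mutually inverse equivalences between $\Ex_R(\Ab_R(D), \cA)$ and the category of $R$-linear exact functors $\Ind\Ab_R(D) \to \cA$ that preserve (small) filtered colimits. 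Composing the two equivalences, I would obtain the desired natural equivalence between $\Rep(D, \cA)$ and exact, filtered-colimit-preserving $R$-linear functors $\Ind\Ab_R(D) \to \cA$.

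To identify the universal Grothendieck representation, note that $\Ind\Ab_R(D)$ is itself an $R$-linear Grothendieck category, and the identity functor $\mathrm{id}_{\Ind\Ab_R(D)}$ is exact and (trivially) preserves filtered colimits; its restriction along $\Ab_R(D) \into \Ind\Ab_R(D)$ is this embedding, whose unique extension (by the uniqueness clause of Proposition \ref{indadjoint}) must be $\mathrm{id}_{\Ind\Ab_R(D)}$ again. Tracing $\mathrm{id}_{\Ind\Ab_R(D)}$ backwards through the two equivalences, the corresponding representation is the composite of $\Delta: D \to \Ab_R(D)$ with $\Ab_R(D) \into \Ind\Ab_R(D)$, which is exactly $\Ind\Delta$. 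Naturality in $\cA$ is inherited from naturality of the two constituent equivalences (the first from Theorem \ref{unrep}, the second from the uniqueness part of Proposition \ref{indadjoint} applied to exact morphisms of Grothendieck categories preserving filtered colimits), so no separate verification is required. The only mildly delicate point to double-check is that Proposition \ref{indadjoint} is compatible with $R$-linearity, but this is immediate since the left adjoint $L: \Ind\cA \to \cA$ is an $R$-linear functor whenever $\cA$ is $R$-linear.
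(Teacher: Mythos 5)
Your proof is correct and follows exactly the route the paper intends: the Corollary is stated without proof precisely because it is the composition of the equivalence $\Rep(D,\cA)\cong \Ex_R(\Ab_R(D),\cA)$ from Theorem \ref{unrep} with the $2$-universality of $\Ab_R(D)\into\Ind\Ab_R(D)$ from Proposition \ref{indadjoint}, and your identification of $\Ind\Delta$ by tracing the identity functor is the standard bookkeeping. Nothing is missing.
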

\begin{remark}
For a site $(\cC, J)$ let $\Shv_J(\cC)$ be the Grothendieck topos of $J$-sheaves of sets. We have a functor, via Yoneda, from $\cC$ to $\Shv_J(\cC)$ that can be enriched to a functor to $\Shv_J(\cC; R)$ the category of $J$-sheaves of $R$-modules on $\cC$. This latter is a Grothendieck category.  Actually, we associate   $X\in \cC$ with $R(X)$ the $J$-sheaf associated to the $R$-free module generated by the representable presheaf of sets (in Voevodsky notation \cite[Prop. 2.1.1 \& \S 3.3]{Vh}). Therefore, we always obtain an $R$-linear exact functor  $$\Ind \Ab_R(R\cC^+)\to \Shv_J(\cC;R)$$ where 
$\Ind \Ab_R(R\cC^+)$ is Freyd's universal Grothendieck category of $\cC$. 
\end{remark} 
The dual representation $$\Delta^{op}: D^{op} \to  \Ab_R(D)^{op}$$ is the universal representation of the dual diagram. We have:
\begin{thm}[Abstract duality] \label{dualrep}
For a diagram $D$ and the dual diagram $D^{op}$ we have that  $\Ab_R(D^{op})\cong\Ab_R(D)^{op}$ and $\Pro \Ab_R(D)
\cong (\Ind \Ab_R(D^{op}) )^{op}$.
\end{thm}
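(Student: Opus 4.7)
The plan is to derive both isomorphisms from the 2-Yoneda lemma applied to the representability statement of Theorem \ref{unrep}, combined with the self-duality of abelian exactness.

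First I would observe that a representation $T \colon D^{op} \to \cD$ is the same datum as a representation $D \to \cD^{op}$, simply by reversing every arrow; this identification is natural in $\cD$, so it yields an equivalence $\Rep(D^{op}, \cA) \cong \Rep(D, \cA^{op})$ whenever $\cA$ is $R$-linear abelian (using that $\cA^{op}$ is again $R$-linear abelian). Next I would use that a functor $F \colon \cA \to \cB$ between $R$-linear abelian categories is exact if and only if its opposite $F^{op} \colon \cA^{op} \to \cB^{op}$ is exact, because kernels in $\cA$ are cokernels in $\cA^{op}$ and vice versa. This gives a natural equivalence $\Ex_R(\cA, \cB^{op}) \cong \Ex_R(\cA^{op}, \cB)$. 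Chaining these together with the universal property of $\Ab_R(D)$ from Theorem \ref{unrep} yields
\[
\Rep(D^{op}, \cA) \cong \Rep(D, \cA^{op}) \cong \Ex_R(\Ab_R(D), \cA^{op}) \cong \Ex_R(\Ab_R(D)^{op}, \cA),
\]
naturally in $\cA \in \Ex_R$. Since $\Ab_R(D^{op})$ represents the left-hand 2-functor by definition, the 2-Yoneda lemma in $\Ex_R$ forces a canonical equivalence $\Ab_R(D^{op}) \cong \Ab_R(D)^{op}$; this also identifies the universal representation $\Delta^{op}$ of $D^{op}$ with the opposite of $\Delta$.

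For the second assertion I would invoke the standard definition/identity $\Pro \cB \cong (\Ind \cB^{op})^{op}$ valid for any essentially small category $\cB$ (see \cite[Chap.\,6]{KS}). Taking $\cB = \Ab_R(D)$ and substituting the first isomorphism gives
\[
\Pro \Ab_R(D) \cong \bigl(\Ind \Ab_R(D)^{op}\bigr)^{op} \cong \bigl(\Ind \Ab_R(D^{op})\bigr)^{op},
\]
as desired.

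The only mildly delicate point is the naturality of the bijection $\Rep(D^{op}, \cA) \cong \Rep(D, \cA^{op})$ with respect to exact functors $\cA \to \cA'$ and its compatibility with the representing isomorphism of Theorem \ref{unrep}; this is routine but should be spelled out so that the resulting equivalence $\Ab_R(D^{op}) \cong \Ab_R(D)^{op}$ is genuinely induced by an $R$-linear exact functor rather than just an equivalence of representing data. I do not anticipate a substantial obstacle, since everything is formal once the self-dual nature of the abelian/exact 2-category is recorded.
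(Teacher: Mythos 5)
Your argument is correct and follows essentially the same route as the paper: the chain $\Rep(D^{op},\cA)\cong \Rep(D,\cA^{op})\cong \Ex_R(\Ab_R(D),\cA^{op})\cong \Ex_R(\Ab_R(D)^{op},\cA)$ plus uniqueness of representing objects is exactly the paper's proof of the first claim, and the paper's ``the second claim follows from the first'' is precisely your invocation of $\Pro\cB\cong(\Ind\cB^{op})^{op}$. You merely spell out the naturality and 2-Yoneda bookkeeping that the paper leaves implicit.
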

\begin{proof} Using Theorem \ref{unrep}  we have that 
$$\Rep (D^{op}, \cA)\cong \Rep (D, \cA^{op})\cong \Ex_R (\Ab_R(D), \cA^{op})\cong \Ex_R (\Ab_R(D)^{op}, \cA)$$ 
proving the first claim. The second claim follows from the first.
\end{proof}
Now consider an invertible morphism $D'\to D^{op}$ of diagrams, \eg $D' =D$ with an anti-involution $e:D\to D^{op}$.  
\begin{cor}[Concrete duality] \label{dualinv}
If $D'\to D^{op}$ is an invertible morphism of diagrams then $\Ab_R(D')\by{\simeq}\Ab_R(D)^{op}$ are equivalent abelian categories.
\end{cor}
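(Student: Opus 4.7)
The plan is to deduce this directly from the Abstract duality theorem \ref{dualrep} together with the functoriality of $\Ab_R(-)$ on diagrams, which the author has already noted just before Proposition \ref{indadjoint}. Since $\Ab_R : \Graph \to \Ex_R$ is a 2-functor, any invertible morphism of diagrams is sent to an equivalence of abelian categories; the point is then simply to paste this with the known equivalence $\Ab_R(D^{op}) \simeq \Ab_R(D)^{op}$.

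Concretely, I would argue as follows. Given an invertible morphism $f : D' \to D^{op}$ with inverse $g : D^{op} \to D'$, functoriality of $\Ab_R(-)$ produces $\Ab_R(f) : \Ab_R(D') \to \Ab_R(D^{op})$ and $\Ab_R(g) : \Ab_R(D^{op}) \to \Ab_R(D')$, and the composites $\Ab_R(g)\circ \Ab_R(f)$ and $\Ab_R(f)\circ \Ab_R(g)$ are naturally equivalent to the respective identities (up to the coherences of the 2-functor). Hence $\Ab_R(f)$ is an equivalence of $R$-linear abelian categories. Composing with the equivalence $\Ab_R(D^{op}) \simeq \Ab_R(D)^{op}$ supplied by Theorem \ref{dualrep} yields the desired $\Ab_R(D') \by{\simeq} \Ab_R(D)^{op}$.

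Alternatively, and perhaps more cleanly, one can bypass explicit functoriality and argue via the 2-representing property directly: for any $R$-linear abelian category $\cA$ the equivalence $D'\cong D^{op}$ induces, by post-composition, an equivalence
\[
\Rep(D',\cA) \simeq \Rep(D^{op},\cA) \simeq \Rep(D,\cA^{op}) \simeq \Ex_R(\Ab_R(D),\cA^{op}) \simeq \Ex_R(\Ab_R(D)^{op},\cA),
\]
natural in $\cA$, and then 2-Yoneda gives the equivalence of representing objects $\Ab_R(D')\simeq \Ab_R(D)^{op}$. This is really the same argument repackaged through the universal property used in the proof of Theorem \ref{dualrep}.

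I do not expect any genuine obstacle here: the only mild subtlety is that ``invertible morphism of diagrams'' should be understood in $\Graph$ (an honest isomorphism on vertices and edges), so that the 2-functoriality of $\Ab_R(-)$ is unambiguous; once that is clear the corollary is an immediate composition of the two equivalences above.
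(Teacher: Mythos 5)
Your proposal is correct and follows the paper's own argument: the paper likewise applies functoriality of $\Ab_R(-)$ to the invertible morphism $D'\to D^{op}$ to get an equivalence $\Ab_R(D')\to \Ab_R(D^{op})$ and then composes with $\Ab_R(D^{op})\cong \Ab_R(D)^{op}$ from Theorem \ref{dualrep}. Your second, universal-property reformulation is, as you say, the same argument repackaged, so there is nothing genuinely different to compare.
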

\begin{proof}
The invertible morphism $D'\to D^{op}$ yields an R-linear exact functor $\Ab_R(D')\to \Ab_R(D^{op})$ which is an equivalence by the functoriality of $\Ab_R (-)$. Actually, by Theorem \ref{dualrep}, we have that $\Ab_R(D^{op})\cong \Ab_R(D)^{op}$. Therefore, by composition, we obtain an equivalence $\Ab_R(D')\by{\simeq} \Ab_R(D)^{op}$ as claimed. 
\end{proof}
\begin{defn}\label{dagdef}
For $(D, \dag)$ a $\dag$-diagram define the \emph{$\dag$-decoration $\Ab_R (D)^\dag$} of  $\Ab_R (D)$ to be the quotient 
$$\pi^\dag : \Ab_R (D)\onto \Ab_R (D)^\dag$$
where $\ker \pi^\dag$ is the thick subcategory generated by the images $\im (\Delta_\alpha - \Delta_\beta)$ as $\alpha$ and $\beta$ varies among all parallel edges providing commutativity of $\dag$-relations.  
\end{defn}
\begin{lemma}\label{dag}If  $(D, \dag)$ is a categorical diagram, see Definition \ref{catdiag}, then the induced $\dag$-representation
$\Delta^\dag : D \to  \Ab_R(D)^\dag$
is the universal abelian $\dag$-representation. 
\end{lemma}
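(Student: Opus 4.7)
The plan is to derive the universality of $\Delta^\dag$ by combining Theorem~\ref{unrep} with the universal property of the thick quotient defining $\Ab_R(D)^\dag$ in Definition~\ref{dagdef}. Given an $R$-linear abelian category $\cA$ and a $\dag$-representation $T : D \to \cA$, I would first forget the decoration and apply Theorem~\ref{unrep} to the underlying representation to produce a unique (up to natural equivalence) $R$-linear exact functor $F' : \Ab_R(D)\to \cA$ with $F'\circ \Delta = T$. The problem then reduces to showing that $F'$ factors (uniquely) through $\pi^\dag : \Ab_R(D)\onto \Ab_R(D)^\dag$: once we have $F' = F\circ \pi^\dag$ the identity $F\circ \Delta^\dag = F\circ \pi^\dag\circ \Delta = F'\circ \Delta = T$ follows automatically, and the resulting $F$ inherits exactness and $R$-linearity from $F'$.

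Next I would unpack the categorical hypothesis on $(D,\dag)$: by Definition~\ref{catdiag} it means $\Rep^\dag(D,\cA)\cong \Cat(D^\dag,\cA)$, so the $\dag$-compatibility of $T$ amounts to the equality $T_\alpha = T_\beta$ in $\cA$ for every pair $\alpha,\beta$ of parallel strings producing a $\dag$-relation. Since $F'\circ \Delta = T$, this translates into $F'(\Delta_\alpha - \Delta_\beta) = 0$ in $\cA$. Exactness of $F'$ then forces $F'(\im(\Delta_\alpha - \Delta_\beta)) = 0$, so the full subcategory of objects of $\Ab_R(D)$ annihilated by $F'$ contains each such image. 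Being the kernel of an exact functor between abelian categories, this subcategory is thick (closed under subobjects, quotients and extensions), hence it contains the thick envelope $\ker\pi^\dag$ of Definition~\ref{dagdef}. The universal property of the thick quotient $\cA/\cS$ recalled in the Notations then yields the required unique exact functor $F : \Ab_R(D)^\dag\to \cA$, and uniqueness of $F$ up to natural equivalence follows from that of $F'$ together with the essential surjectivity of $\pi^\dag$.

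The main obstacle I anticipate is essentially bookkeeping: making precise that the commutativity conditions constituting $\dag$, which a priori are parallel \emph{strings} in $\bar D$ of possibly different lengths and compositions, really are encoded by the images $\im(\Delta_\alpha - \Delta_\beta)$ of the corresponding differences inside the additive envelope $R\bar D^+\into \Ab_R(D)$. This is exactly what the categorical hypothesis provides, since it identifies $\dag$-representations into $\cA$ with honest functors out of $D^\dag = \bar D/\dag$; any residual subtlety concerns reading ``parallel edges'' in Definition~\ref{dagdef} as the relevant morphisms of the additive/abelian target, after which the argument reduces to formal manipulations with the universal properties already established.
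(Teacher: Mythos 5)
Your argument is correct, but it takes a different (and in fact more self-contained) route than the paper. The paper's proof goes through the comparison with Freyd's category of the quotient path category: it uses functoriality of $\Ab_R(-)$ on $\bar D\onto D^\dag$ to get an exact functor $\Ab_R(R\bar D^+)\to \Ab_R(RD^{\dag,+})$, checks that the generators $\im(\Delta_\alpha-\Delta_\beta)$ lie in its kernel, and then invokes the categorical hypothesis together with Lemma \ref{domadj} (i.e.\ $\Rep^\dag(D,-)\cong\Cat(D^\dag,-)\cong\Ex_R(\Ab_R(RD^{\dag,+}),-)$) to conclude both the universality of the quotient and the identification $\Ab_R(D)^\dag\cong\Ab_R(RD^{\dag,+})$. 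You instead never mention $\Ab_R(RD^{\dag,+})$: you verify the representability claim directly, by producing $F'$ from Theorem \ref{unrep}, observing that $\dag$-compatibility (which, by the categorical hypothesis, is exactly the condition $T_\alpha=T_\beta$ in an arbitrary $R$-linear abelian target) forces $F'$ to kill the generating images, and then using thickness of $\ker F'$ plus the 2-universal property of the Serre quotient to factor through $\pi^\dag$. Your approach proves exactly the stated lemma with fewer ingredients and actually supplies the factorization argument that the paper's terse proof leaves implicit; what it does not give directly is the equivalence $\Ab_R(D)^\dag\cong\Ab_R(RD^{\dag,+})$ asserted at the end of the paper's proof and used later (e.g.\ in Remark \ref{circ} and Theorem \ref{uniflat}), though this follows a posteriori from your result, Lemma \ref{domadj} and uniqueness of representing objects. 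Two small points to tighten: for uniqueness of $F$ you need not just essential surjectivity of $\pi^\dag$ but the standard fact that exact functors (and natural transformations between them) descend uniquely along the quotient by a thick subcategory, which is the universal property recalled in the paper's conventions; and for the equivalence of categories one should also record the easy converse, namely that $F\Delta^\dag$ is a $\dag$-representation for every exact $F$, which holds because $\pi^\dag$ being exact kills $\im(\Delta_\alpha-\Delta_\beta)$ and hence identifies $\Delta^\dag_\alpha=\Delta^\dag_\beta$ in $\Ab_R(D)^\dag$.
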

\begin{proof}
The category $D^\dag$ is a quotient of the path category $\bar D$ and we have an exact functor $\pi^\dag : \Ab_R(R\bar{D}^+)\to \Ab_R(RD^{\dag, +})$. The equivalence classes $[\alpha]=[\beta]$ for parallel edges in $\dag$-relations yield $ \Delta_\alpha = \Delta_\beta$ in $ \Ab_R(RD^{\dag, +})$ so that $\ker \pi^\dag$ should contain all such 
$\im (\Delta_\alpha - \Delta_\beta)$. Thus the thick subcategory $\ll \im (\Delta_\alpha - \Delta_\beta)\gg$ is contained in $\ker \pi^\dag$.   Since $\Rep^\dag (D, - )\cong \Cat (D^\dag , - )$ we have that  a $\dag$-category is just a category so that $  \Ab_R (D)^\dag\df \Ab_R (D)/\ll \im (\Delta_\alpha - \Delta_\beta)\gg$ is universal with respect to $\dag$-representations and $\Delta^\dag : D \to  \Ab_R(D)^\dag\cong \Ab_R(RD^{\dag, +})$ is the universal abelian $\dag$-representation.
\end{proof}
We have the following important subcategory (see \cite[Def.\,1.1]{BVHP}):
\begin{defn}\label{flatAb}
Denote $\Ab_R(\cR)^\flat$ the smallest full subcategory containing the objects in the image of $|\cR|:\cR\to \Ab_R(\cR)$ and closed under kernels.
\end{defn} Note that, moreover,  this $\Ab_R(\cR)^\flat$ is precisely the image of $\cR\mbox{-}{\rm mod}^{op}$ under the Yoneda embedding into $\Ab_R (\cR)$ (see \cite[Remark 1.2]{BVHP}). All objects of $\Ab_R(\cR)^\flat$ are projectives in $\Ab_R(\cR)$.
\begin{defn}\label{proj}
Say that a $R$-linear left exact subcategory $\cP$ of an $R$-linear abelian category $\cA$ is a \emph{$\mathfrak{p}$-subcategory} if the objects of $\cP$ are projective objects of $\cA$. Say that an $\cR$-module $M\in  \cR\mbox{-}{\rm Mod}(\cA)$ is \emph{projective} if $M: \cR\to \cP\into \cA$ factors through a $\mathfrak{p}$-subcategory. Let $\cR\mbox{-}{\rm Proj}(\cA)\subset \cR\mbox{-}{\rm Mod}(\cA)$ be the full subcategory of projective $\cR$-modules in $\cA$. 
\end{defn}
Consider the 2-category $\Ex^{\mathfrak{p}}_R$ whose objects are pairs $(\cA, \cP)$ where $\cP$ is a $\mathfrak{p}$-subcategory of $\cA$ and morphisms  $(\cA, \cP)\to  (\cA', \cP')$ are $R$-linear exact functors $\cA\to \cA'$ which restrict to a functor $\cP\to \cP'$.  Given $(\cA, \cP)$ let 
$\cR\mbox{-}{\rm Proj}(\cA, \cP)\subset \cR\mbox{-}{\rm Proj}(\cA)$ given by all $\cR$-projective modules through $\cP$. If   $(\cA, \cP)\to  (\cA', \cP')$ then $\cR\mbox{-}{\rm Proj}(\cA, \cP)\to \cR\mbox{-}{\rm Proj}(\cA', \cP')$ in such a way that 
$$\cR\mbox{-}{\rm Proj} : \Ex^{\mathfrak{p}}_R\to \Cat$$
is a 2-functor
\begin{propose} \label{penvelope}
Let $\cR$ be an $R$-linear additive category. 
The 2-functor $\cR\mbox{-}{\rm Proj}$ is representable by the following 
 $|\cR| : \cR\to \Ab_R(\cR)^\mathfrak{p}\subset \Ab_R(\cR)$
 universal projective $\cR$-module. 
\end{propose}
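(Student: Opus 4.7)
The plan is to deduce this from Theorem \ref{Freyd} by restricting the universal property of $|\cR|: \cR \to \Ab_R(\cR)$ along the inclusion $\Ab_R(\cR)^\mathfrak{p} \subset \Ab_R(\cR)$.

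First I would verify that the pair $(\Ab_R(\cR), \Ab_R(\cR)^\mathfrak{p})$ is a legitimate object of $\Ex^\mathfrak{p}_R$. Identifying $\Ab_R(\cR)^\mathfrak{p}$ with the subcategory $\Ab_R(\cR)^\flat$ of Definition \ref{flatAb}, it contains the additive image $|\cR|(\cR)$ and is closed under kernels in $\Ab_R(\cR)$; closure under kernels combined with additivity yields closure under all finite limits (since kernels commute with finite biproducts), hence $R$-linear left exactness. The remark following Definition \ref{flatAb}, which identifies $\Ab_R(\cR)^\flat$ with the image of $\cR\mbox{-}{\rm mod}^{op}$ under the Yoneda embedding, ensures that all its objects are projective in $\Ab_R(\cR)$. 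Tautologically then, $|\cR|: \cR \to \Ab_R(\cR)^\mathfrak{p}$ is a projective $\cR$-module.

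For the universal property, given $(\cA, \cP) \in \Ex^\mathfrak{p}_R$ and a projective $\cR$-module $M \in \cR\mbox{-}{\rm Proj}(\cA, \cP)$, viewed as $M: \cR \to \cP \hookrightarrow \cA$, Theorem \ref{Freyd} furnishes a unique-up-to-natural-isomorphism $R$-linear exact functor $F_M: \Ab_R(\cR) \to \cA$ with $F_M \circ |\cR| \cong M$. The crux is then to show that $F_M$ restricts to a functor $\Ab_R(\cR)^\mathfrak{p} \to \cP$. Exactness of $F_M$ means it preserves kernels and finite biproducts, while $\cP$ is closed in $\cA$ under the same operations (this being the substantive content of $\cP$ being a left exact subcategory, interpreted so that its inclusion into $\cA$ is itself left exact). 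Since $F_M(|\cR|(\cR)) = M(\cR) \subset \cP$ and $\Ab_R(\cR)^\mathfrak{p}$ is generated from $|\cR|(\cR)$ by iterated kernels, a straightforward induction on the construction of the closure gives $F_M(\Ab_R(\cR)^\mathfrak{p}) \subset \cP$, so $F_M$ is indeed a morphism in $\Ex^\mathfrak{p}_R$.

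Conversely, any morphism $F: (\Ab_R(\cR), \Ab_R(\cR)^\mathfrak{p}) \to (\cA, \cP)$ in $\Ex^\mathfrak{p}_R$ yields, by precomposition with $|\cR|$, a projective $\cR$-module $F \circ |\cR|: \cR \to \cP$. These two assignments are mutually inverse up to natural isomorphism by the unicity clause of Theorem \ref{Freyd}, and they are natural in $(\cA, \cP)$ by the functoriality of Freyd's construction, which packages everything into a 2-natural equivalence. The main obstacle is the correct interpretation of ``$R$-linear left exact subcategory'': the argument hinges on $\cP$ being closed under the kernels computed in $\cA$ (not merely on $\cP$ possessing its own intrinsic kernels), so that the exactness of $F_M$ can be leveraged to match the kernel-closure definition of $\Ab_R(\cR)^\mathfrak{p}$. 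Once this matching is secured the representability is a formal consequence of Theorem \ref{Freyd}.
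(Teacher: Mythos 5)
Your proof is correct and follows essentially the paper's approach: exhibit the pair $(\Ab_R(\cR), \Ab_R(\cR)^\flat)$ as an object of $\Ex^{\mathfrak{p}}_R$ with $|\cR|$ a projective $\cR$-module, and then deduce the universal property from Theorem \ref{Freyd}. The only (minor) divergence is in how the restriction of $F_M$ to the $\mathfrak{p}$-subcategory is justified: the paper invokes Lemma \ref{lexrmod}, i.e.\ the universal property of $\cR\into\cR\mbox{-}{\rm mod}^{op}$ among $R$-linear left exact categories, whereas you argue directly by minimality of the kernel-closure in Definition \ref{flatAb} together with exactness of $F_M$ and closure of $\cP$ under kernels in $\cA$ --- the two arguments are interchangeable, and yours usefully spells out a step the paper dismisses as ``clearly universal''.
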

\begin{proof} The category  $\Ab_R(\cR)^\flat\subset \Ab_R(\cR)$ is a $\mathfrak{p}$-subcategory and therefore $|\cR|$ is a projective module by construction. The pair  $(\Ab_R(\cR), \Ab_R(\cR)^\mathfrak{p})$ is clearly universal by Theorem \ref{Freyd} and Lemma \ref{lexrmod}. \end{proof}
 In particular, the category $$\Ab_R(D)^\flat \df \Ab_R(R\bar{D}^+)^\flat\subset \Ab_R(D)$$  is a $\mathfrak{p}$-subcategory.

\begin{defn}
A \emph{$\mathfrak{p}$-representation} is a representation of a  diagram $D$ in  an $R$-linear abelian category which factors through a 
$\mathfrak{p}$-subcategory.
\end{defn}
Let $\Dom = \Ex^{\mathfrak{p}}_R$ be  the previous category: we call it the domain of $\mathfrak{p}$-subcategories of $R$-linear abelian categories. Denote $$\Rep^\mathfrak{p} (D): \Ex^{\mathfrak{p}}_R\to \Cat$$ the 2-functor of $\mathfrak{p}$-representations. We then get the \emph{universal abelian $\mathfrak{p}$-representation} or  \emph{$\mathfrak{p}$-envelope} of a diagram.We have the following:
\begin{thm}[Universal abelian $\mathfrak{p}$-representation] \label{penv}
Any $D$ is representable in the domain of $\mathfrak{p}$-subcategories of $R$-linear abelian categories, \ie $\Rep^\mathfrak{p} (D)$ is representable by a universal $\mathfrak{p}$-representation $$\Delta^\mathfrak{p}: D \to \Ab_R (D)^{\flat}\subset \Ab_R (D)$$ 
\end{thm}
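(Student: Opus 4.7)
The plan is to reduce to the additive case and then invoke Proposition \ref{penvelope}, exactly as in the proof of Theorem \ref{unrep}, but keeping track of the extra datum of a $\mathfrak{p}$-subcategory. First, I would produce the candidate universal object: by construction of $\Ab_R(D)\df \Ab_R(R\bar D^+)$ and Definition \ref{flatAb}, the full subcategory $\Ab_R(D)^\flat\subset\Ab_R(D)$ is an $R$-linear left exact subcategory consisting of projective objects, so the pair $(\Ab_R(D),\Ab_R(D)^\flat)$ is an object of $\Ex^{\mathfrak p}_R$. The composite $\Delta^\mathfrak p : D\to R\bar D^+\to \Ab_R(D)^\flat$, given by the image of the universal additive representation $\Delta^+: D\to R\bar D^+$ (Proposition \ref{repadd}) under the double Yoneda $|R\bar D^+|$ of Proposition \ref{penvelope}, is by construction a $\mathfrak p$-representation of $D$.

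Next I would establish the universal property. Given $(\cA,\cP)\in \Ex^{\mathfrak p}_R$ and a $\mathfrak p$-representation $T:D\to \cP\into \cA$, the adjunction $\Psi_R\dashv \Phi_R$ of Lemma \ref{domadj} (applied to the forgetful $\Add_R\to\Cat$) extends $T$ uniquely to an $R$-linear additive functor $T^+:R\bar D^+\to \cP$ landing in the $\mathfrak p$-subcategory $\cP$; in the language of Definition \ref{proj}, $T^+$ is precisely a projective $R\bar D^+$-module in $\cA$. By Proposition \ref{penvelope} applied to $\cR=R\bar D^+$, the 2-functor $R\bar D^+\mbox{-}{\rm Proj}$ is represented by $|R\bar D^+|:R\bar D^+\to \Ab_R(D)^\flat\subset\Ab_R(D)$, so $T^+$ is the image of $|R\bar D^+|$ under a unique-up-to-equivalence morphism $(\Ab_R(D),\Ab_R(D)^\flat)\to (\cA,\cP)$ in $\Ex^{\mathfrak p}_R$, that is, an $R$-linear exact functor $F_T:\Ab_R(D)\to \cA$ restricting to $\Ab_R(D)^\flat\to \cP$. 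Tracing definitions, $F_T\circ \Delta^\mathfrak p = T$, and $F_T$ is uniquely characterised by this property.

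Finally I would record the 2-naturality in $(\cA,\cP)$: morphisms of $\mathfrak p$-representations correspond, under the same adjunction plus the natural equivalence of Proposition \ref{penvelope}, to natural transformations of the associated exact functors that restrict to $\Ab_R(D)^\flat$, yielding an equivalence of categories
\[
\Rep^\mathfrak p(D,(\cA,\cP))\;\cong\;\Ex^{\mathfrak p}_R\bigl((\Ab_R(D),\Ab_R(D)^\flat),(\cA,\cP)\bigr)
\]
natural in $(\cA,\cP)$, with $\Delta^\mathfrak p$ corresponding to the identity.

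I expect the main (minor) obstacle to be purely bookkeeping: making sure that ``$\mathfrak p$-subcategory'' is compatible along the chain of adjunctions, i.e.\ that the extension $T^+$ of a $\mathfrak p$-representation $T$ still factors through $\cP$ (this is automatic because $\cP$ is already $R$-linear additive and $T$ lands in $\cP$), and that the induced exact functor $F_T$ does indeed send $\Ab_R(D)^\flat$ into $\cP$ (this follows because $\Ab_R(D)^\flat$ is generated under kernels by the image of $|R\bar D^+|$ and $\cP$ is closed under kernels inside $\cA$ by the definition of a $\mathfrak p$-subcategory).
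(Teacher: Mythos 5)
Your proposal is correct and follows essentially the same route as the paper, which deduces the theorem directly from Theorem \ref{unrep} (via the additive adjunction of Proposition \ref{repadd}) together with Proposition \ref{penvelope} applied to $\cR = R\bar{D}^+$. You have simply made explicit the bookkeeping (that $(\Ab_R(D),\Ab_R(D)^\flat)\in\Ex^{\mathfrak p}_R$, that the additive extension $T^+$ of a $\mathfrak p$-representation is a projective $R\bar D^+$-module, and that the induced exact functor restricts to the $\flat$-subcategory) which the paper leaves to the reader.
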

\begin{proof}  The $\mathfrak{p}$-representation $\Delta=\Delta^\mathfrak{p}$ is universal with respect to 
$\mathfrak{p}$-representations as it easily follows from Theorem \ref{unrep} and Proposition \ref{penvelope}. 
\end{proof}

\subsection{Tensor representations}
A straightforward tensor enrichment of the previous construction is that obtained in \cite{BVHP} for $\dag =\otimes$ the $\otimes$-decoration, in our current terminology. For an $R$-linear abelian $\otimes$-category $(\cA, \otimes)$ we mean  an $R$-linear abelian category $\cA$ endowed with a right exact tensor product $\otimes: \cA\times \cA\to\cA$. Recall \cite[Def.\,1.9]{BVHP} where $\flat$-subcategories have been introduced:
\begin{defn}\label{flatdef}
A  \emph{$\flat$-subcategory} $\cA^\flat\subset \cA$ is  a full $R$-linear additive subcategory of flat objects which is closed under kernels. If $(\cR, \otimes)$ is an $R$-linear additive tensor category a \emph{tensor  $\cR$-module} $M\in  \cR\mbox{-}{\rm Mod}^\otimes(\cA)\subset \cR\mbox{-}{\rm Mod}(\cA)$ is an additive $R$-linear tensor functor $M: \cR\to \cA$. Say that $M$ is \emph{flat} if $M: \cR\to \cA^\flat\into \cA$ factors through a $\flat$-subcategory. 
 Let $\cR\mbox{-}{\rm Flat}^\otimes (\cA)\subset \cR\mbox{-}{\rm Mod}^\otimes(\cA)$ be the full subcategory of  flat $\cR$-modules in $\cA$.
\end{defn}
Actually, consider the 2-category $\Ex^{\otimes, \flat}_R$ whose objects are triples $(\cA, \cA^\flat, \otimes)$ where $\cA$ is an abelian $\otimes$-category and  $\cA^\flat$ is a $\flat$-subcategory and morphisms  $(\cA, \cA^\flat, \otimes)\to  (\cB, \cB^\flat, \otimes)$ are $R$-linear exact $\otimes$-functors $\cA\to \cB$ which send $\flat$-subcategories to $\flat$-subcategories
We easily obtain a 2-functor
$$\cR\mbox{-}{\rm Flat}^\otimes :\Ex^{\otimes, \flat}_R\to \Cat$$

Considering $\Ab_R(\cR)^\flat$ in Definition \ref{flatAb} we obtain the universal flat $\cR$-module. The following fact is a reformulation of \cite[Prop.\,1.8 \& Prop.\,1.10]{BVHP}. 
\begin{propose} \label{envelope}
Let $(\cR, \otimes)$ be an $R$-linear additive tensor category. 
The 2-functor $\cR\mbox{-}{\rm Flat}^\otimes$ is representable by the following 
 $|\cR| : \cR\to \Ab_R(\cR)^\flat\subset \Ab_R(\cR)$ flat $\cR$-module. 
\end{propose}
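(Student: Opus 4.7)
The plan is to combine Theorem \ref{Freyd} with the tensor and flatness enhancements, then invoke \cite[Prop.\,1.8 \& Prop.\,1.10]{BVHP} for the structural compatibility. First I would verify that the pair $(\Ab_R(\cR), \Ab_R(\cR)^\flat)$ together with $|\cR|$ actually gives a flat tensor $\cR$-module: the tensor product on $\cR$ extends to a right-exact tensor product on $\Ab_R(\cR)$ via the Day/Kan convolution procedure (this is precisely the content of \cite[Prop.\,1.8]{BVHP}), and the objects in the image of $|\cR|$ are flat in $\Ab_R(\cR)$ since they are projective (being in the image of the Yoneda embedding $\cR\mbox{-}{\rm mod}^{op}\to \Ab_R(\cR)$). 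Closure under kernels of $\Ab_R(\cR)^\flat$ holds by Definition \ref{flatAb}, so the triple $(\Ab_R(\cR), \Ab_R(\cR)^\flat, \otimes)$ is an object of $\Ex^{\otimes,\flat}_R$ and $|\cR|$ is a flat tensor $\cR$-module.

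Next, given any flat tensor $\cR$-module $M\in \cR\mbox{-}{\rm Flat}^\otimes(\cA)$ with $\cA$ an abelian $\otimes$-category and $M$ factoring through some $\flat$-subcategory $\cA^\flat\subset \cA$, Theorem \ref{Freyd} furnishes a unique (up to unique isomorphism) $R$-linear exact functor $F_M : \Ab_R(\cR)\to \cA$ with $F_M\circ |\cR|\cong M$ as additive functors. It remains to upgrade $F_M$ to a morphism in $\Ex^{\otimes,\flat}_R$, i.e.\ to verify: \emph{(i)} $F_M$ is a tensor functor, and \emph{(ii)} $F_M$ maps $\Ab_R(\cR)^\flat$ into $\cA^\flat$.

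For \emph{(i)}, the tensor compatibility is known on the representables $|\cR|(X)$ by hypothesis on $M$; since every object of $\Ab_R(\cR)$ is a cokernel of a morphism between finite direct sums of representables and both $\otimes$ on $\Ab_R(\cR)$ and $\otimes$ on $\cA$ are right exact in each variable, the natural isomorphism $F_M(X\otimes Y)\cong F_M(X)\otimes F_M(Y)$ propagates from representables to all objects by applying $F_M$ (exact) and using right exactness of the target tensor; the coherence constraints pull back along $F_M$ for the same reason. This is essentially the universal property argument recorded in \cite[Prop.\,1.10]{BVHP}. For \emph{(ii)}, since $\Ab_R(\cR)^\flat$ is by Definition \ref{flatAb} the smallest full subcategory containing the image of $|\cR|$ and closed under kernels, and $F_M$ is exact (in particular preserves kernels) while $\cA^\flat$ is closed under kernels and contains the image of $M=F_M\circ |\cR|$, an evident induction on the construction of objects in $\Ab_R(\cR)^\flat$ shows $F_M(\Ab_R(\cR)^\flat)\subset \cA^\flat$.

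The main obstacle is clause \emph{(i)}: one must know that the tensor structure on $\Ab_R(\cR)$ is indeed the universal right-exact extension of the tensor structure on $\cR$, so that $F_M$ inherits the tensor structure canonically rather than through an ad hoc choice. This is where the results of \cite{BVHP} do the heavy lifting, and why the proposition is stated as a reformulation of \emph{loc.\,cit.}; the verification of 2-naturality in $(\cA, \cA^\flat, \otimes)$ is then formal from the uniqueness in Theorem \ref{Freyd}.
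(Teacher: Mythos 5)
Your proposal follows essentially the same route as the paper: first check, via \cite[Prop.\,1.8]{BVHP}, that $(\Ab_R(\cR),\Ab_R(\cR)^\flat,\otimes)$ lies in $\Ex^{\otimes,\flat}_R$ with $|\cR|$ a flat tensor $\cR$-module, then produce the exact functor from Theorem \ref{Freyd} and upgrade it to a $\otimes$-functor preserving $\flat$-subcategories as in \cite[Prop.\,1.10]{BVHP}, exactly as the paper does. One small caution: your aside that the Yoneda image is flat ``since projective'' is not a valid implication in an abstract abelian tensor category, and being a $\flat$-subcategory requires \emph{all} objects of $\Ab_R(\cR)^\flat$ (the kernel-closure included, not just the image of $|\cR|$) to be flat --- both facts are precisely what \cite[Prop.\,1.8]{BVHP} supplies, which you, like the paper, cite in any case.
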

\begin{proof} We have to show that $|\cR|\in  \cR\mbox{-}{\rm Flat}^\otimes(\Ab_R(\cR), \Ab_R(\cR)^\flat, \otimes)$ is universal. The category $(\Ab_R(\cR), \otimes)$ is endowed with a canonical tensor structure such that the tensor product $\otimes$ is right exact, 
$|\cR|: \cR\to \Ab_R(\cR)$ is a $\otimes$-functor and all objects in $\Ab_R(\cR)^\flat \subset \Ab_R(\cR)$ are flat, so that  $\Ab_R(\cR)^\flat$ is a $\flat$-subcategory, see \cite[Prop.\,1.8]{BVHP} for details: this implies that the triple  $(\Ab_R(\cR), \Ab_R(\cR)^\flat, \otimes)\in \Ex^{\otimes, \flat}_R$ and that  $|\cR|$ is a flat $\cR$-module. To see the universal property,  for a flat $\cR$-module $M\in \cR\mbox{-}{\rm Flat}^\otimes (\cA, \cA^\flat, \otimes)$ a $\otimes$-functor $M: \cR\to \cA$ which factors through $\cA^\flat$ is given and the induced unique exact functor $\Ab_R (\cR)\to \cA$  is a $\otimes$-functor sending $\Ab_R(\cR)^\flat$ to $\cA^\flat$ compatibly with $M$ by Theorem \ref{Freyd} and Lemma \ref{lexrmod}, \cf \cite[Prop.\,1.10]{BVHP}. 
\end{proof}

 For the decoration  $\dag = \otimes$ we may consider $(D,\otimes )$ a set of extra data and conditions on a diagram, which we called a $\otimes$-diagram in \cite[Def.\,2.1]{BVHP}, in such a way that $D^\otimes$ is actually a tensor category.  We have that $(D^\otimes , \otimes)$ is the universal tensor category associated to a $\otimes$-diagram $(D, \otimes)$ providing an equivalence $$\Rep^\otimes (D, \cD)\cong \Cat^\otimes(D^\otimes, \cD)$$  for any $(\cD, \otimes)$ tensor category where $\Rep^\otimes (D, \cD)$ is the category of  $\otimes$-representations, in the sense of \cite[Def.\,2.7]{BVHP}. This is \cite[Lemma 2.3 \& Prop.\,2.8 (1)]{BVHP}. 
We let $\Dom = \Ex^{\otimes, \flat}_R$ be  the previous category: we call it the domain of $\flat$-subcategories of $R$-linear abelian tensor categories. 
\begin{defn}
A \emph{$\flat$-representation} is a $\otimes$-representation of a  $\otimes$-diagram $(D,\otimes )$ in  an $R$-linear abelian tensor category which factors through a $\flat$-subcategory.
\end{defn}
Denote $$\Rep^\flat (D): \Ex^{\otimes, \flat}_R\to \Cat$$ the 2-functor of $\flat$-representations. We then get the \emph{universal abelian $\flat$-representation} or  \emph{$\flat$-envelope} of a $\otimes$-diagram: this is a $\otimes$-representation in an abelian $\otimes$-category, which factors through a $\flat$-subcategory, universally. 
We have the following, \cf  \cite[Thm.\,2.9]{BVHP}:
\begin{thm}[Universal abelian $\flat$-representation] \label{uniflat}
Any $\otimes$-diagram $(D, \otimes)$ is representable in the domain of $\flat$-subcategories of $R$-linear abelian tensor categories, \ie $\Rep^\flat (D)$ is representable by a universal $\flat$-representation $$\Delta^\flat: D \to (\Ab_R (D)^\otimes)^\flat \subset \Ab_R (D)^\otimes$$
where $\Ab_R (D)^\otimes$ is the $\otimes$-decoration of $\Ab_R (D)$, see Definition \ref{dagdef}. 
\end{thm}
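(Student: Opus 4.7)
The strategy is to iterate the now-familiar two-step pattern: first linearize the tensor structure by passing to $D^\otimes$, then take Freyd's envelope via Proposition \ref{envelope}. Concretely, given a $\flat$-representation $M \in \Rep^\flat(D)$ into $(\cA, \cA^\flat, \otimes) \in \Ex_R^{\otimes, \flat}$, I would proceed as follows. By the universal property of the tensor category $D^\otimes$ recorded in Examples \ref{decex}\,b), $M$ extends uniquely to a tensor functor $M^\otimes: D^\otimes \to \cA$ whose essential image lies in $\cA^\flat$. Next, passing to the $R$-linear additive tensor completion $\cR \df R(D^\otimes)^+$ (which inherits a canonical right-exact tensor structure since $\otimes$ on $D^\otimes$ extends bilinearly and additively), $M^\otimes$ yields a unique $R$-linear additive $\otimes$-functor $\cR \to \cA$ still factoring through $\cA^\flat$, \ie a flat tensor $\cR$-module in the sense of Definition \ref{flatdef}.

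At this point I would invoke Proposition \ref{envelope} for $\cR$: the universal flat tensor $\cR$-module $|\cR|: \cR \to \Ab_R(\cR)^\flat \subset \Ab_R(\cR)$ produces a unique morphism in $\Ex_R^{\otimes,\flat}$, namely an $R$-linear exact $\otimes$-functor $\Ab_R(\cR) \to \cA$ sending $\flat$-objects to $\cA^\flat$ and recovering the given tensor $\cR$-module. Composing the canonical maps $D \to D^\otimes \to \cR$ with $|\cR|$ gives the candidate universal $\flat$-representation $\Delta^\flat$, landing in the $\flat$-subcategory of an $R$-linear abelian $\otimes$-category.

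The remaining point -- and the main obstacle -- is to identify $\Ab_R(\cR) = \Ab_R(R(D^\otimes)^+)$ with the $\otimes$-decoration $\Ab_R(D)^\otimes$ of Definition \ref{dagdef}. Unlike the situation of Lemma \ref{dag}, $(D,\otimes)$ is not a categorical diagram (Example \ref{decex}\,b)), so the identification is not purely formal. I would argue it via universal properties: both $\Ab_R(D)^\otimes$ and $\Ab_R(R(D^\otimes)^+)$ corepresent the 2-functor of $R$-linear exact $\otimes$-functors to $R$-linear abelian $\otimes$-categories, the former because the thick quotient of Definition \ref{dagdef} kills exactly those parallel-edge differences $\im(\Delta_\alpha - \Delta_\beta)$ imposed by the tensor axioms defining $D^\otimes$ from $\bar D$, and the latter by the combination of Theorem \ref{Freyd} with the universal $\otimes$-completion. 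Uniqueness of 2-representing objects then yields a canonical equivalence under which the canonical tensor structures match.

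Stringing together the equivalences from the three steps produces the desired natural equivalence
$$\Rep^\flat(D, (\cA, \cA^\flat, \otimes)) \cong \Ex_R^{\otimes, \flat}\bigl((\Ab_R(D)^\otimes, (\Ab_R(D)^\otimes)^\flat, \otimes), (\cA, \cA^\flat, \otimes)\bigr),$$
in which $\Delta^\flat$ corresponds to the identity functor; this is the representability statement. The comparison with \cite[Thm.\,2.9]{BVHP} essentially runs in parallel, so I would also cross-check the naturality of the equivalence in $(\cA, \cA^\flat, \otimes)$ by tracing through the naturality already established for Proposition \ref{envelope} and for the $\otimes$-enrichment $D \to D^\otimes$.
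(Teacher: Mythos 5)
Your proposal follows essentially the same route as the paper: pass to the additive tensor enrichment $\cR = RD^{\otimes,+}$ of $D^\otimes$, identify $\Ab_R(D)^\otimes$ with $\Ab_R(\cR)$ equipped with its canonical tensor structure, and conclude by the universal flat $\cR$-module of Proposition \ref{envelope} together with $\Rep^\otimes(D,-)\cong \Cat^\otimes(D^\otimes,-)$. The only difference is at the identification step, which the paper simply quotes from \cite{BVHP}; your representability argument is fine, though it is cleaner (and avoids circularity) if phrased as both categories representing, over $\Ex_R$, plain representations of $D$ equalizing the $\otimes$-relations -- equivalently functors $D^\otimes\to\cA$ -- since $\Ab_R(D)^\otimes$ carries no tensor structure prior to this identification, the $\otimes$-structure being transported afterwards.
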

\begin{proof}  We can make an $R$-linear additive enrichment $\cR \df RD^{\otimes, +}$ of $D^\otimes$ with the bilinear extension of $\otimes$ providing a $\otimes$-structure $(\cR, \otimes)$.  This is \cite[Prop.\,2.5 \& Prop.\,1.8 (2)]{BVHP}. We have that $\Ab_R (D)^\otimes\cong \Ab_R (\cR)$ as abelian categories and $(\Ab_R (D)^\otimes, \otimes )$  is endowed with the $\otimes$-structure given by  \cite[Prop.\,1.8]{BVHP}. We also have the $\flat$-subcategory  $(\Ab_R (D)^\otimes)^\flat\df \Ab_R (\cR)^\flat$. The induced $\flat$-representation $\Delta^\flat$ is universal with respect to $\flat$-representations as it easily follows from Proposition \ref{envelope}.
\end{proof}

\section{Universal homologies} 
Let $\cC$ be a category and let $\cC^\square$ be the category whose objects are morphisms of a distinguished subcategory of $\cC$ and morphisms are commutative squares of $\cC$. We shall investigate (co)homology theories on $\cC$ and $\cC^\square$ showing the existence of universal theories: these are obtained  by applying the universal representation results of the previous section.

\subsection{Homology and cohomology theory}
Consider $H =\{ H_i\}_{i\in \Z}$ a $\Z$-indexed family of functors $H_i : \cC\to \cA$ where $\cA$ is an $R$-linear abelian category. A morphism  $\varphi : H \to H'$ is a collection $\varphi =\{\varphi_i \}_{i\in \Z}$ of natural transformations $\varphi_i :H_i \to H_i'$. We also have the contravariant analogue where $H =\{ H^i\}_{i\in \Z}$ is a $\Z$-indexed family of functors $H^i : \cC^{op}\to \cA$. 
\begin{defn} \label{homology}Call such a family  $H =\{ H_i\}_{i\in \Z}$ of functors a \emph{homology} on $\cC$ with values in $\cA$ and the contravariant version  $H =\{ H^i\}_{i\in \Z}$ a \emph{cohomology} on $\cC$ with values in $\cA$. We shall denote by $\Hom (\cC, \cA)$ the category of homologies on $\cC$ with values in $\cA$.  The category of cohomologies shall be denoted $\Coh (\cC, \cA)$. For $\{ H_i\}_{i\in \Z}:\cC\to \cA$ define $H^{op}$ by $\{ H^i\df H_{i}\}_{i\in \Z}:\cC^{op}\to \cA^{op}$ the \emph{opposite} cohomology. 
\end{defn}
Note the equivalence $\Hom (\cC, \cA^{op})\cong \Coh (\cC, \cA)$ between homologies with value in $\cA^{op}$ and cohomologies with values in $\cA$. 
For an $R$-linear exact functor $F:\cA \to \cA'$ and $H\in \Hom (\cC, \cA)$ we obtain $FH = \{FH_i\}_{i\in \Z} \in \Hom (\cC, \cA')$ by composition. This is making up a 2-functor 
$$
\cA \leadsto  \Hom (\cC, \cA): \Ex_R \to \Cat
$$
from the 2-category $\Ex_R$ of $R$-linear abelian categories and exact functors to categories.   Note that for any object $A\in \cA$ we have the constant homology
$X\leadsto H_i (X)=A$ providing an embedding $\cA\into \Hom (\cC, \cA)$.  Therefore, the forgetful functor $ \Ex_R \to \Cat$ can be regarded as a subfunctor of  $\Hom (\cC, - )$  determined by constant homologies. A key result is the following. 
\begin{thm}[Universal homology] \label{pure}
For any category $\cC$ the functor $\Hom (\cC, - )$ is representable, \ie  there is a universal homology $$H = \{ H_i\}_{i\in \Z} :\cC\to \cA(\cC)$$
with values in $\cA(\cC)$ a $R$-linear abelian category. If $F:\cC\to \cD$ is a functor, $H^\cC$ and $H^\cD$ are the universal homologies in $\cA(\cC)$ and $\cA(\cD)$, respectively, then there is a unique $R$-linear exact functor $r_F : \cA(\cC)\to \cA(\cD)$ such that $r_FH^\cC =H^\cD F$.
\end{thm}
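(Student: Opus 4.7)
The plan is to reduce Theorem \ref{pure} directly to the universal abelian representation for categorical diagrams. The key observation is that giving a homology $H = \{H_i\}_{i\in\Z}: \cC \to \cA$ is equivalent to giving a single functor $\tilde{H}: \cC_\Z \to \cA$, where $\cC_\Z \df \bigsqcup_{i\in\Z}\cC$ is the $\Z$-indexed disjoint union of copies of $\cC$ (the coproduct in $\Cat$); a morphism $\varphi = \{\varphi_i\}$ of homologies corresponds to a natural transformation $\tilde{H} \Rightarrow \tilde{H}'$. Hence there is a natural equivalence
$$\Hom(\cC, \cA) \cong \Cat(\cC_\Z, \cA).$$

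Next, I would view the underlying directed graph of $\cC_\Z$ as a categorical diagram $(D, \circ)$ in the sense of Example \ref{decex}(a), with the $\circ$-decoration encoding the identities and composition in each copy, so that $D^\circ \cong \cC_\Z$ and $\Rep^\circ(D, -) \cong \Cat(\cC_\Z, -)$. By Theorem \ref{unrep} combined with Lemma \ref{dag}, the induced $\circ$-representation
$$\Delta^\circ: \cC_\Z \longrightarrow \Ab_R(D)^\circ$$
is universal, and one has a natural equivalence
$$\Cat(\cC_\Z, \cA) \cong \Ex_R\big(\Ab_R(D)^\circ, \cA\big).$$
Setting $\cA(\cC) \df \Ab_R(D)^\circ$ and composing the two equivalences yields the representability $\Hom(\cC, \cA) \cong \Ex_R(\cA(\cC), \cA)$, natural in $\cA$. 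The universal homology $H = \{H_i\}_{i\in\Z}$ is obtained by restricting $\Delta^\circ$ to each of the $\Z$ summands; equivalently, it corresponds to the identity exact functor on $\cA(\cC)$.

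For the functoriality statement, a functor $F: \cC \to \cD$ induces $F_\Z: \cC_\Z \to \cD_\Z$ by applying $F$ to each copy, and the 2-functoriality of $\Ab_R(-)^\circ$ (a consequence of the universal properties established in Theorem \ref{unrep} and Lemma \ref{dag}) produces a canonical $R$-linear exact functor $r_F: \cA(\cC) \to \cA(\cD)$. The identity $r_F H^\cC = H^\cD F$ then follows by chasing universal properties: both sides correspond under the equivalences to the same composite $\cC_\Z \to \cD_\Z \to \cA(\cD)$, and uniqueness in the representing property forces them to agree. I do not foresee any serious obstacle: the essential technical work is already contained in Theorem \ref{unrep} and Lemma \ref{dag}, and the only new idea needed is the reduction to the disjoint-union category $\cC_\Z$ that converts a $\Z$-indexed family of functors into a single functor.
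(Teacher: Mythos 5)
Your proposal is correct and follows essentially the same route as the paper: the paper's proof uses exactly the diagram with vertices $(X,i)$ and degree-preserving edges (which is the underlying graph of your $\cC_\Z$), applies Theorem \ref{unrep}, and then quotients by the thick subcategory enforcing the functoriality relations, which is precisely the $\circ$-decoration you invoke via Lemma \ref{dag} and Example \ref{decex}(a). The only difference is presentational -- the paper carries out this quotient and the factorisation of $F_K$ by hand inside the proof and records the identification $\cA(\cC)\cong \Ab_R(D)^\circ$ afterwards in Remark \ref{circ}, whereas you appeal to the already-established lemma directly.
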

\begin{proof} The proof is similar to the proof of Theorem \ref{mixed} below where we provide more details. 
Denote by $D$ the following diagram: the vertices are $(X, i)$ where $X$ is an object of $\cC$ and $i\in \Z$ and arrows are $f: (X, i)\to (X', i)$ for each morphism $ X\to X'$ and $i\in \Z$. We then obtain the universal representation $\Delta : D\to \Ab_R(D)$, see Theorem \ref{unrep} and Definition \ref{unrepdef}. Consider the thick subcategory $\ll \im (\Delta_{gf} - \Delta_{g} \Delta_{f}),  \im (\Delta_{id} -  id)\gg\ \subset \Ab_R(D)$ generated by the functoriality conditions on the diagram.  Let $\cA(\cC)$ be the resulting quotient of $\Ab_R(D)$. 
We get $H\in \Hom (\cC, \cA(\cC))$ given by the image of $\Delta$ under the projection. For any $R$-linear abelian category $\cA$, we regard $K\in \Hom (\cC, \cA)$ as a representation of $D$ in $\cA$ and by the universality Theorem \ref{unrep} we have that $\Hom (\cC, \cA)\subset \Rep (D, \cA)\cong \Ex_R (\Ab_R(D), \cA)$. Therefore, an homology $K$ yields an exact functor $F_K: \Ab_R(D)\to\cA$ which factors through $\cA(\cC)$ inducing an equivalence of categories
$$K\leadsto r_K : \Hom (\cC, \cA)\cong \Ex_R (\cA(\cC), \cA)$$
which is natural in $\cA$. This proves the universality claim.

For any functor $F:\cC\to \cD$ we obtain a functor $\Hom (\cD, \cA)\to \Hom (\cC, \cA)$ by composition $K\leadsto KF$. For $\cA = \cA(\cD)$ the universal homology $K=H^\cD\in \Hom (\cD, \cA(\cD))$ yields a homology $H^\cD F\in \Hom (\cC, \cA)$ and therefore, by universality, we obtain an exact functor $r_F\df r_{FH^\cD}: \cA(\cC)\to \cA(\cD)$ as claimed. 
\end{proof}
\begin{lemma}\label{genpure}
The category $\cA(\cC)$ is generated by the set of objects $H_i(X)$ for $X\in \cC$ and $i\in \Z$. In particular, every object of  $\cA(\cC)$ is a sub-quotient of a finite product of  $H_i(X)$  for $X\in \cC$ and $i\in \Z$.
\end{lemma}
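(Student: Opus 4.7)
The plan is to reduce to the analogous generation statement for $\Ab_R(D)$ (the Freyd category of the diagram $D$ introduced in the proof of Theorem \ref{pure}) and then transport that property along the Serre quotient $\pi: \Ab_R(D)\onto \cA(\cC)$ that defines $\cA(\cC)$. The work is therefore split into a ``local'' statement about Freyd's universal abelian category and a ``transfer'' step that uses only exactness of $\pi$ together with the fact that $\pi$ is the identity on objects.

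First, I would invoke the standard generation property of Freyd's universal abelian category: from the description $\Ab_R(\cR)\cong \cR\mbox{-}{\rm mod}\mbox{-}{\rm mod}^{\fp}$ in Theorem \ref{Freyd}, every object is a finitely presented functor, hence a cokernel of a map between finite direct sums of objects in $\cR\mbox{-}{\rm mod}^{op}$, and each object of the latter is in turn a kernel of a map between finite direct sums of representables $|\cR|(X)$, $X\in \cR$. Composing these two presentations shows that every object of $\Ab_R(\cR)$ is a subquotient of a finite direct sum of $|\cR|(X)$'s. Applying this to $\cR = R\bar{D}^+$, and observing that a finite direct sum of vertex objects in $R\bar{D}^+$ is itself a finite direct sum of vertex objects, I get the analogous statement for $\Ab_R(D)$ with the vertex representations $\Delta_{(X,i)}$ in place of $|\cR|(X)$.

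Next, using the construction in the proof of Theorem \ref{pure}, $\cA(\cC) = \Ab_R(D)/\cS$ where $\cS$ is the thick subcategory enforcing functoriality; the projection $\pi$ is exact, every object of $\cA(\cC)$ is literally an object of $\Ab_R(D)$ (by the construction of the Gabriel quotient in \cite[\S 1.11]{Gr}), and $\pi(\Delta_{(X,i)}) = H_i(X)$ by definition. Applying the exact functor $\pi$ to a presentation of $N$ as a subquotient of $\bigoplus_k \Delta_{(X_k,i_k)}$ exhibits $\pi(N)$ as a subquotient of $\bigoplus_k H_{i_k}(X_k)$, which in the $R$-linear additive category $\cA(\cC)$ is also the finite product. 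This yields the ``in particular'' claim, and the generation statement follows at once, the smallest thick subcategory of $\cA(\cC)$ containing all $H_i(X)$ being forced to be $\cA(\cC)$ itself.

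The only nontrivial input is the generation property of $\Ab_R(\cR)$ used in the second paragraph; this is classical and a direct consequence of the double finitely-presented-modules description (see \cite[Chap.\,4]{P} or \cite[Chap.\,11--12]{KH}). Everything else is bookkeeping: exact functors preserve the property of being a subquotient of a finite direct sum, and Serre quotients are identity on objects by construction, so no essential-image argument is needed.
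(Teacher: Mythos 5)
Your argument for the ``in particular'' clause is correct, and it takes a genuinely different route from the paper. The paper argues abstractly from the universal property: for any abelian full subcategory $\cS\subseteq\cA(\cC)$ with exact inclusion $\iota_\cS$ containing all $H_i(X)$, Theorem \ref{pure} applied to the corestricted homology produces an exact functor $r_H:\cA(\cC)\to\cS$ with $\iota_\cS r_H\cong id$, forcing $\cS=\cA(\cC)$; the subquotient statement is then deduced by applying this to the full subcategory of subquotients of finite direct sums of the $H_i(X)$. You instead work inside the construction: every object of $\Ab_R(\cR)\cong(\cR\mbox{-}{\rm mod})\mbox{-}{\rm mod}^{\fp}$ is a cokernel of a map of representables, each representable is a kernel of a map between objects $|\cR|(Z)$ with $Z\in\cR=R\bar D^+$ a finite sum of vertices, so every object of $\Ab_R(D)$ is a subquotient of a finite direct sum of the $\Delta_{(X,i)}$; the Gabriel quotient $\pi$ defining $\cA(\cC)$ is exact and surjective on objects, so the property descends, with $\pi(\Delta_{(X,i)})=H_i(X)$. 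Both are sound; your route buys an explicit presentation of objects that does not invoke the universal property at all, while the paper's retraction argument is construction-independent and is exactly the one recycled for Theorem \ref{genmixed}.

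There is, however, a caveat on how you recover the first claim. From the subquotient property you only conclude that the smallest \emph{thick} subcategory (closed under subobjects, quotients, finite sums) containing the $H_i(X)$ is all of $\cA(\cC)$. The paper's proof establishes a strictly stronger reading of ``generated'': \emph{any} abelian full subcategory with exact inclusion containing the $H_i(X)$ coincides with $\cA(\cC)$, and it is this form that reappears in Theorem \ref{genmixed}. The stronger statement is not a formal consequence of the subquotient property: in the category of finite-dimensional $\Q$-representations of the quiver $\bullet\to\bullet$, the object $\Q\by{id}\Q$ subquotient-generates (every indecomposable is a sub or a quotient of it), yet the full subcategory of representations whose structure map is an isomorphism is abelian, closed under the ambient kernels, cokernels and finite sums, contains $\Q\by{id}\Q$, and is proper. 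So if ``generated'' is meant in the strength the paper actually uses, you should supplement your argument with the one-line universality/retraction step from Theorem \ref{pure}; as a proof of the subquotient clause and of thick generation, your argument is complete.
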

\begin{proof} Let $\cS\subseteq  \cA(\cC)$ be any abelian full subcategory containing the set $S\df \{H_i(X)\mid X\in \cC, i\in \Z \}$.  The universal homology of Theorem \ref{pure} restricts to $H: \cC\to \cS$, an homology in the abelian category $\cS$ such that under the exact inclusion $\iota_S : \cS\into \cA(\cC)$ gives back the universal homology. Therefore, by the universality Theorem \ref{pure}, we obtain an exact functor $r_H: \cA(\cC)\to \cS$ such that $r_H(H) = H$, hence $r_H\iota_\cS = id_S$ and  $\iota_\cS r_H = id_{\cA (\cC)}$ which corresponds to the universal homology. Therefore  $\cS=\cA(\cC)$ and the first claim follows. Since the full subcategory of $\cA(\cC)$ given by sub-quotients of finite direct sums of elements of $S$  is an abelian sub-category the second claim follows from the first. 
 \end{proof}
\begin{remark}\label{circ}
Note that there is an equivalence $\cA(\cC)\cong \Ab_R(D)^\circ$ by considering the decorated diagram $(D, \circ)$ of Example \ref{decex} a) in the proof of Theorem \ref{pure} including the functoriality conditions. Also note that, for each $k\in \Z$, we have the $k$-component functor $H_k: \cC\to \cA$ yielding a unique exact functor $F_k: \Ab_R (R\cC^+)\to \cA$ where $R\cC^+$ is the additive envelope in the proof of Proposition \ref{repadd}; we thus obtain an equivalence
$$\cA (\cC)\longby{\simeq} \prod \Ab_R (R\cC^+)\df \gr \Ab_R (R\cC^+)$$
where the product is taken over $\Z$-indexed copies of Freyd's category $\Ab_R (R\cC^+)$. 
\end{remark}
Dually, for cohomology theories. Actually, passing to duals in the proof of Theorem \ref{pure} we just have an application of Theorem \ref{dualrep}:
\begin{cor}\label{dualhom}
For any category $\cC$ the functor $\Coh (\cC, - )$ is representable and the universal cohomology $H^{op}\df\{ H^i=H_i\}_{i\in \Z} :\cC^{op}\to \cA(\cC)^{op}$ is the opposite of the universal homology.\end{cor}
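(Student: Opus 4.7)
The plan is to obtain this corollary as a formal consequence of Theorem~\ref{pure} combined with the abstract duality of Theorem~\ref{dualrep}; essentially no new construction is needed.

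First I would invoke the natural equivalence $\Hom(\cC, \cA^{op}) \cong \Coh(\cC, \cA)$ recorded immediately after Definition~\ref{homology}: a cohomology $\{H^i\}_{i \in \Z}$ on $\cC$ with values in $\cA$ is literally the same datum as a homology $\{H_i := H^i\}_{i \in \Z}$ on $\cC$ with values in $\cA^{op}$. Since $\cA \mapsto \cA^{op}$ is a (contravariant) $2$-auto-equivalence of $\Ex_R$, this upgrades to an isomorphism of $2$-functors $\Ex_R \to \Cat$.

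Second, applying Theorem~\ref{pure} with $\cA^{op}$ in place of $\cA$ yields the natural equivalence
$$\Hom(\cC, \cA^{op}) \cong \Ex_R(\cA(\cC), \cA^{op}),$$
and the elementary natural equivalence $\Ex_R(\cB, \cA^{op}) \cong \Ex_R(\cB^{op}, \cA)$, obtained by passing each $R$-linear exact functor to its opposite, then completes the chain
$$\Coh(\cC, \cA) \cong \Ex_R(\cA(\cC)^{op}, \cA),$$
natural in $\cA \in \Ex_R$. This is precisely the $2$-representability of $\Coh(\cC, -)$ by $\cA(\cC)^{op}$.

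Third, to pin down the universal cohomology, I would trace $id_{\cA(\cC)^{op}}$ backwards through the above chain: it corresponds under Theorem~\ref{pure} to the universal homology $H \in \Hom(\cC, \cA(\cC))$, and then, via the equivalence of the first step, to its opposite family $H^{op} = \{H^i := H_i\}_{i \in \Z} \in \Coh(\cC, \cA(\cC)^{op})$. The main obstacle is pure bookkeeping of on which side the opposites are being taken, so that the final universal object is indeed presented as $H^{op}$; there is no genuinely new construction, since Theorem~\ref{dualrep} is silently identifying the abelian category $\cA(\cC^{op})$ that one would produce by mimicking the proof of Theorem~\ref{pure} on the opposite diagram with $\cA(\cC)^{op}$.
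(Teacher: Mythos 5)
Your argument is correct and follows essentially the same route as the paper: the paper disposes of this corollary by ``passing to duals'' in Theorem~\ref{pure} via the abstract duality of Theorem~\ref{dualrep}, which is exactly the chain $\Coh(\cC,\cA)\cong\Hom(\cC,\cA^{op})\cong\Ex_R(\cA(\cC),\cA^{op})\cong\Ex_R(\cA(\cC)^{op},\cA)$ that you spell out. If anything, your version is slightly tidier in that it uses Theorem~\ref{pure} as a black box (so there is no need to check that the quotient construction in its proof is compatible with taking opposites), but the underlying duality argument is the same.
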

Starting from this abelian category $\cA(\cC)$ we can further take quotients imposing more axioms and representing decorated homology theories on $\cC$. For example, considering point objects of $\cC$, \eg objects of dimensions zero or (weakly) final objects, we have:
\begin{axiom}[Point axiom for homology] \label{pointax}
For $\cC$ with a set of point objects $\{*_k\}_{k\in I}$  we say that an homology $H\in \Hom (\cC, \cA)$ satisfies the \emph{point axiom} if $H_i (*_k)= 0$ for $i \neq 0$ and $k\in I$.
\end{axiom}
Restricting to homologies satisfying the point axiom we get a functor  $\Hom_{\rm point} (\cC, - )$ and we have: 
\begin{propose}\label{point}
If $\cC$ is a category with point objects then $\Hom_{\rm point} (\cC, - )$ is representable, \ie there is a universal homology satisfying the point axiom $$H = \{ H_i\}_{i\in \Z} :\cC\to \cA^{\rm point}(\cC)$$
with values in $\cA^{\rm point}(\cC)$ a quotient of $\cA(\cC)$. If $F:\cC\to \cD$ is a functor sending point objects of $\cC$ to point objects of $\cD$, $H^\cC$ and $H^\cD$ are the universal homologies in $\cA^{\rm point}(\cC)$ and $\cA^{\rm point}(\cD)$, respectively, then there is a unique $R$-linear exact functor $r_F : \cA^{\rm point}(\cC)\to \cA^{\rm point}(\cD)$ such that $r_FH^\cC =H^\cD F$.
\end{propose}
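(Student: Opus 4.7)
The plan is to realize $\cA^{\rm point}(\cC)$ as a Serre quotient of $\cA(\cC)$ by the thick subcategory generated by the ``forbidden'' values of the universal homology on point objects. Concretely, let $H:\cC\to\cA(\cC)$ be the universal homology of Theorem \ref{pure} and define
$$\cS\ \df\ \ll H_i(*_k)\mid i\neq 0,\ k\in I\gg\ \subset\ \cA(\cC)$$
to be the thick subcategory generated by these objects. Set $\cA^{\rm point}(\cC)\df\cA(\cC)/\cS$ and let $\pi:\cA(\cC)\onto\cA^{\rm point}(\cC)$ be the exact quotient functor. The candidate universal point-axiom homology is $\pi H=\{\pi\circ H_i\}_{i\in\Z}:\cC\to\cA^{\rm point}(\cC)$; by construction $\pi H_i(*_k)=0$ for $i\neq 0$, so it indeed lies in $\Hom_{\rm point}(\cC,\cA^{\rm point}(\cC))$.

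For the universal property, fix an $R$-linear abelian category $\cA$ and $H'\in\Hom_{\rm point}(\cC,\cA)$. By Theorem \ref{pure} there is a unique $R$-linear exact functor $r_{H'}:\cA(\cC)\to\cA$ with $r_{H'}(H)=H'$. Since $H'$ satisfies the point axiom, $r_{H'}(H_i(*_k))=H'_i(*_k)=0$ for every $i\neq 0$ and $k\in I$. The kernel of an exact functor is a thick subcategory, so $\ker r_{H'}\supseteq\cS$ and $r_{H'}$ factors uniquely through $\pi$ as an exact $\bar r_{H'}:\cA^{\rm point}(\cC)\to\cA$ with $\bar r_{H'}(\pi H)=H'$. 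This factorisation is functorial in $H'$ and, conversely, any exact functor $\cA^{\rm point}(\cC)\to\cA$ precomposed with $\pi H$ yields a homology satisfying the point axiom; these assignments are mutually inverse, giving the natural equivalence
$$\Hom_{\rm point}(\cC,\cA)\ \cong\ \Ex_R(\cA^{\rm point}(\cC),\cA).$$

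Functoriality in $\cC$ is essentially automatic. Given $F:\cC\to\cD$ sending points to points, $H^\cD F\in\Hom(\cC,\cA^{\rm point}(\cD))$ satisfies the point axiom since $F(*_k)$ is a point object of $\cD$, so $H^\cD_i(F(*_k))=0$ for $i\neq 0$. By the universal property just established, $H^\cD F$ is classified by a unique exact $r_F:\cA^{\rm point}(\cC)\to\cA^{\rm point}(\cD)$ with $r_FH^\cC=H^\cD F$, as required.

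There is no real obstacle; the only point requiring care is the verification that $\ker r_{H'}$ is a thick (Serre) subcategory of $\cA(\cC)$, so that the factorisation through the Grothendieck--Gabriel quotient $\cA(\cC)/\cS$ is well defined and exact. This is standard (closure under subobjects, quotients, and extensions follows from exactness of $r_{H'}$), so the construction goes through and the proposition follows from Theorem \ref{pure} together with the universal property of the quotient by a thick subcategory.
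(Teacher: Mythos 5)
Your proposal is correct and follows essentially the same route as the paper: define $\cA^{\rm point}(\cC)$ as the quotient of $\cA(\cC)$ by the thick subcategory generated by the $H_i(*_k)$ for $i\neq 0$, observe that any point-axiom homology's classifying exact functor kills this subcategory and hence factors through the quotient, and deduce functoriality in $\cC$ from the universal property. The paper's proof is just a terser statement of exactly this argument.
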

\begin{proof}
The same arguments in the proof of the Theorem \ref{pure} applies here
defining $\cA^{\rm point}(\cC)$ to be the quotient of $\cA(\cC)$ by the thick subcategory $\ll H_i(*)\gg$ for $i\neq 0$ and $*$ any point object. The homology in $\cA^{\rm point}(\cC)$ under the projection is the universal homology satisfying the point axiom by the universality of the quotient. Moreover, since $\Hom_{\rm point} (\cD, - )\to\Hom_{\rm point} (\cC, - )$ the induced functor is granted.\end{proof}

\begin{example}[Universal (co)homology on the point category] \label{pointex}
Let $\cC = {\bf 1}$ be the one point category, \ie the category with one point object and one morphism, necessarily the identity of this object.  As a diagram ${\bf 1} = \{\circlearrowleft \}$ is the one edge diagram, having one vertex $\{*\}$ and one non-empty edge $e: *\to *$. Now $H\in \Hom ({\bf 1}, \cA)$ is a $\Z$-indexed family $\{A_i \}_{i\in \Z}$ of objects of $\cA$ and  $H\in \Hom_{\rm point}({\bf 1}, \cA)$ is given by one object of $\cA$ (which also corresponds to the constant homology on that object). Therefore, since $\Ab_R$ of Definition \ref{unrepdef} is the $R$-linear abelian category representing the  functor $\cA\leadsto \cA = \text{$R$-Mod }(\cA): \Ex_R\to \Cat$, \ie given by an equivalence
$$\Ex_R(\Ab_R, \cA)\cong \cA$$
which is natural in $\cA$, then $ \Ab_R\cong \cA^{\rm point}({\bf 1})$ induced by $\cA \cong  \Hom_{\rm point} ({\bf 1}, \cA)$. 
Comparing with the proof of Theorem \ref{pure}, consider the decorated diagram $(\{\circlearrowleft \}, \circ)$ which is the one edge diagram ``with identity'' \ie with a single commutativity relation imposing that the empty edge shall be identified with $e$; actually,  the morphisms in the path category $\overline{\{\circlearrowleft \}}$ are $ (*), (e),(e^2), \ldots$ but we get $\{\circlearrowleft \}^\circ = {\bf 1}$ and a $\circ$-representation of $\{\circlearrowleft \}$ in $\cA$ is a functor from ${\bf 1}$ to $\cA$. We then obtain that $\Rep^\circ (\{\circlearrowleft \}, \cA)\cong \cA$ (note that $(\{\circlearrowleft \}, \circ)$ is a categorical diagram) and we have the following chain of equivalences, \cf Remark \ref{circ}, Example \ref{pointR} and Definitions \ref{hieraticdef} \& \ref{unrepdef}
 $$\Ab_R\cong \iRmod\cong \cA^{\rm point}({\bf 1})\cong \Ab_R (\{\circlearrowleft \})^\circ $$ 
where the universal hieratic $R$-module $|R|\in \iRmod$ given by the image of $R$, corresponds to $H_0(1) \in \cA^{\rm point}({\bf 1})$,  the universal homology of the point. We also have natural equivalences 
 $$\cA = R\mbox{-}{\rm Mod}(\cA)   \cong \Lex_R (\Rmod^{op}, \cA)\cong \Ex_R (\Ab_R , \cA)$$
 for any $\cA$, see Theorem \ref{Freyd}  (\cf \cite[\S 1.3]{BVP}). If $\Rmod$ is abelian, \ie for $R$ coherent, the ``evaluation at the ring $R$'' $R$-linear exact functor 
 $$r_R: \iRmod\onto \Rmod \subseteq \RMod$$
 yields $\Rmod$ as a quotient of $\iRmod$. In fact, $ R^+\subseteq \iRmod/\ker r_R =  \Rmod$ (\eg see also \cite[Lemma 6.3]{PR}).  
 If every extension splits in $\Rmod$ and we have a duality $( -) ^\vee: \Rmod^{op}\by{\simeq} \Rmod$,  \eg if $R$ is a field,  then we have the following factorisation 
 $$\xymatrix{\Rmod^{op}\ar[r]^{ } \ar[d]_{( - )^\vee}^{||} & \iRmod  \ar[dl]^{r_R} \\ \Rmod}$$
 induced by the universal property and 
$$\Ex_R (\Rmod, \cA) = \Add_R (\Rmod, \cA) \cong\cA$$ 
for $\cA$ any $R$-linear abelian category. Therefore $r_R : \iRmod\by{\simeq} \Rmod$ for such a ring and the quasi-inverse to $r_R$ is induced by the Yoneda  embedding $\Rmod^{op}\by{\simeq} \iRmod$. In general,  from the duality Corollary \ref{dualinv}, we always have that $\Ab_R^{op}= \Ab_R$ whence $\iRmod^{op}\cong \iRmod$.  If $r_R$ is an equivalence then $\Rmod^{op}\cong \Rmod$ and the coherent ring $R$ is absolutely pure, \ie $\Ext (M,R)=0$ for $M\in \Rmod$, see \cite{GA}.
For a coherent domain $R$ with field of fractions $K$ we then have that the quotient $r_R$ induces
$$\xymatrix{\iRmod\otimes_RK\ar[r]^{\hspace*{0.5cm}=} \ar@/^2.3pc/[rr]^{r_R\otimes K } & \iKmod \ar[r]^{\simeq\hspace*{1.7cm}} & \Kmod = \Rmod\otimes_R K}$$
and $r_R\otimes K : \iRmod\otimes_R K \longby{\simeq}\Kmod$ is an equivalence. In particular, for $R=\Z$ and $K=\Q$ we obtain that $\iZmod\otimes_\Z\Q \cong \Qmod$. The simple objects of  $\iZmod$ are known. There is one for every $\Z$-module  $\Z/p^n$ where $p^n$ is a prime power, $n\geq 1$: it can be described as the representable functor $ (\Z/p^n,-)$ modulo its radical, see \cite[Thm.\,8.55]{JL} and \cite[Prop.\,14.2.6 \& Thm.\,14.2.7]{KH}. Moreover, the representable functors form a system of generators consisting of finitely presented projectives, \cf \cite[Thm.\,B.8]{JL}.\footnote{Thanks to M. Prest for pointing out these facts about $\Ab_\Z$ and the reference \cite{JL}.} 
 \end{example}
 \begin{remark}
  Note that constant homologies are equivalent to homologies on the point category satisfying the point axiom and this implies that the constant homologies subfunctor is representable by  $\Ab_R\cong \iRmod$.
For a category $\cC$ we also have a unique functor $\cC\to {\bf 1}$, as ${\bf 1}$ is final in ${\rm Cat}$, which yields $\pi : \cA^{\rm point}(\cC)\to \iRmod$ and $ \cA\cong \Hom_{\rm point}({\bf 1}, \cA) \cong \Hom_0 (\cC, \cA)$ given by those homologies $X\leadsto H_0 (X)=A$ for a fixed object $A\in \cA$ and $H_i (X)=0$ for $i\neq 0$.  Moreover, we have a section functor $r_{H_0(*)} :  \iRmod\to \cA^{\rm point}(\cC)$ given by $|R|\leadsto H_0(*)$, also induced by ${\bf 1}\to \cC$ sending the unique point object to a point object. 
 \end{remark}
 \begin{propose} \label{kproj}
Let $\cC$ be a category with an initial or final object. Then for each integer $k\in\Z$ there is a quotient
$$\pi_k:\cA(\cC)\onto \iRmod$$
with a section $\iota_k :  \iRmod\to \cA(\cC)$ where $\iRmod$ is the abelian category of Definitions \ref{hieraticdef} \& \ref{unrepdef}, \cf Example \ref{pointex}. For $k=0$ the functor $\pi_0$ factors through $\cA^{\rm point}(\cC)$ with a section.
\end{propose}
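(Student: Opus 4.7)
The plan is to exploit the functoriality of $\cC \leadsto \cA(\cC)$ established in Theorem \ref{pure}, combined with the graded decomposition $\cA({\bf 1})\simeq \prod_{k\in\Z}\iRmod$ furnished by Remark \ref{circ} (together with Example \ref{pointR} and Definition \ref{hieraticdef}). Write $*$ for the chosen initial or final object of $\cC$. I will use two canonical functors: the unique $p:\cC\to{\bf 1}$ and the functor $\iota:{\bf 1}\to\cC$ picking out $*$. They satisfy $p\circ\iota=\mathrm{id}_{{\bf 1}}$, so Theorem \ref{pure} produces $R$-linear exact functors $r_p:\cA(\cC)\to\cA({\bf 1})$ and $r_\iota:\cA({\bf 1})\to\cA(\cC)$, and the uniqueness clause forces $r_p\circ r_\iota = r_{\mathrm{id}_{{\bf 1}}} = \mathrm{id}_{\cA({\bf 1})}$.

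Next, writing $\sigma_k$ and $\tau_k$ for the projection onto and inclusion of the $k$-th factor of $\prod_{k\in\Z}\iRmod\simeq \cA({\bf 1})$, I set $\pi_k := \sigma_k\circ r_p$ and $\iota_k := r_\iota\circ\tau_k$. The computation
\[
\pi_k\circ\iota_k \;=\; \sigma_k\circ(r_p\circ r_\iota)\circ\tau_k \;=\; \sigma_k\circ\tau_k \;=\; \mathrm{id}_{\iRmod}
\]
shows that $\iota_k$ is a section of $\pi_k$, and essential surjectivity of $\pi_k$ (hence its being a quotient functor) is automatic from the existence of this section. For the final claim I observe that $r_p$ sends every $H_i^{\cC}(X)$ into the $i$-th factor of $\cA({\bf 1})$, because $r_p\circ H_i^{\cC}=H_i^{{\bf 1}}\circ p$ lands in that summand by naturality of Remark \ref{circ}. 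Hence $\pi_0=\sigma_0\circ r_p$ annihilates $H_i^{\cC}(*_k)$ for all $i\neq 0$ and every point object $*_k$ of $\cC$; in particular $\ker(\pi_0)$ contains the thick subcategory defining the quotient $q:\cA(\cC)\onto\cA^{\rm point}(\cC)$, so $\pi_0$ factors as $\bar{\pi}_0\circ q$ and $q\circ\iota_0$ supplies a section of $\bar{\pi}_0$.

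The only delicate point I anticipate is checking that the graded equivalence of Remark \ref{circ} is natural in the category variable, so that $r_p$ is genuinely diagonal with respect to the $\Z$-grading. This is essentially tautological: by the uniqueness clause of Theorem \ref{pure}, $r_p$ is determined component-wise by the family $\{H_k^{{\bf 1}}\circ p\}_{k\in\Z}$ and therefore respects the grading by construction. I expect no other serious obstacle.
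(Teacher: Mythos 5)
Your construction of the functors themselves is sound: $r_p\circ r_\iota\cong\mathrm{id}_{\cA({\bf 1})}$ does follow from the uniqueness clause of Theorem \ref{pure}, the functors $\pi_k=\sigma_k\circ r_p$ and $\iota_k=r_\iota\circ\tau_k$ agree up to natural isomorphism with the ones the paper constructs (under the universal property they correspond, respectively, to the homology that is constantly $|R|$ in degree $k$ and zero elsewhere, and to the object $H_k(*)\in\cA(\cC)$), and your factorisation of $\pi_0$ through $\cA^{\rm point}(\cC)$ is correct. The gap is the parenthetical ``hence its being a quotient functor''. In this paper ``quotient'' means a Serre quotient $\cA(\cC)\onto\cA(\cC)/\cS$ (see the conventions and, e.g., Example \ref{pointex}, where $\Rmod = \iRmod/\ker r_R$), and an exact functor which is essentially surjective --- even one admitting an exact section --- need not be of this form: for $K$ a field the functor $\Kmod\times\Kmod\to\Kmod$, $(V,W)\leadsto V\oplus W$, is exact, essentially surjective and split by $V\leadsto (V,0)$, yet it is faithful and not full, so it is not a quotient by its (trivial) kernel. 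Thus from $\pi_k\iota_k\cong\mathrm{id}$ you may conclude that the induced faithful exact functor $\cA(\cC)/\ker\pi_k\to\iRmod$ is essentially surjective, but its fullness --- the actual content of the proposition --- is left unproved.

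It is telling that your argument nowhere uses that $*$ is initial or final: any object of $\cC$ yields a section of $r_p$, whereas the quotient claim is precisely where that hypothesis enters. The paper proceeds in the opposite direction: it forms the Serre quotient $\cA(\cC)_k$ of $\cA(\cC)$ by the thick subcategory generated by the $H_i(X)$ for $i\neq k$ together with $\ker\int_k$ and $\coker\int_k$ for the comparison morphisms $\int_k:H_k(X)\to H_k(1)$, shows that this quotient represents the subfunctor $\Hom_k(\cC,-)$ of homologies concentrated in degree $k$ with all $\int_k$ invertible, identifies $\Hom_k(\cC,\cA)\cong\cA$ using the final (or initial) object, and concludes $\cA(\cC)_k\cong\iRmod$ by uniqueness of representing objects; the quotient property is therefore built into the construction and $\pi_k$ is the canonical projection. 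To repair your proof you would have to show that $\ker\pi_k$ is the thick subcategory just described and that the induced functor on the quotient is full, which in effect reproduces the paper's representability argument; the graded/diagonal bookkeeping you single out at the end as the delicate point is not where the difficulty lies.
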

 \begin{proof} For each $X\in \cC$ let $\int : X\to 1$. We then get $\int_i : H_i(X)\to H_i(1)$ where $H = \{ H_i\}_{i\in \Z} :\cC\to \cA(\cC)$ is the universal homology. 
For each fixed integer $k$ consider the thick subcategory of $\cA(\cC)$ generated by $H_i(X)$ for $i\neq k$, $\ker \int_k$ and $\coker \int_k$ for $\int_k : H_k(X)\to H_k(1)$ and all $X\in \cC$.  Denote $\cA(\cC)_k$ the resulting quotient of  $\cA(\cC)$. The category  $\cA(\cC)_k$ represents the subfunctor $\Hom_k (\cC, - )\subset \Hom (\cC, - )$ determined by  the trivial homology and those non-trivial homologies $H = \{H_i\}$ such that $H_i(X)\neq 0$ implies $i=k$ and $\int_k :H_k(X)\cong H_k(1)$ for all $X\in \cC$. Clearly $\cA \cong \Hom_k (\cC, \cA )$ by sending $A \leadsto H_k(1)\df A$ and $\Ab_R\cong \iRmod\cong \cA(\cC)_k$. We then get a quotient $\pi_k:\cA(\cC)\onto \iRmod$ for each integer $k\in \Z$. Now $H_k(1)\in\cA(\cC)$ yields an exact functor $\iota_k : \iRmod\cong\cA(\cC)_k\to \cA(\cC)$ whose composition with the projection $\pi_k : \cA(\cC)\to \cA(\cC)_k\cong \iRmod$ is the identity. For $k=0$ we can argue in $\cA^{\rm point}(\cC)$ and get to the same conclusion.
 \end{proof}
\begin{example}[Universal group representations]\label{grprep}
Let $\cC = N$ be a monoid or group regarded as a category with one point object: note that it is not initial nor final. For any $R$-linear abelian category $\cA$ we have that 
$$\Hom_{\rm point}(N, \cA)=\Hom_{\rm point}(R[N], \cA)\df {\rm Rep}_R(R[N], \cA)$$ is the usual $R$-linear abelian category of representations of the monoid $N$ or, equivalently, of the free $R$-module $R[N]$, \ie an object $\rho\in {\rm Rep}_R(R[N], \cA)$ is an $R$-homomorphism $\rho: R[N]\to \End (A)$ for $A\in \cA$. Therefore, the universal homology $N\to\cA^{\rm point}(N)$ yields the universal representation $h: R[N]\to \End (H)$ where $H$ is an object and the exact functor $\pi: \cA^{\rm point}(N)\to\iRmod$ such that $\pi (H)=|R|$: for any $\rho$ exists an exact functor $r_\rho: \cA^{\rm point}(N)\to \cA$ such that $\rho=r_\rho h$ factors through $h$ via $r_\rho$. \end{example}

\subsection{Relative homology theory} Assume given a category $\cC$ along with a distinguished subcategory. Denote $\cC^{\square}$ the category whose objects are the distinguished morphisms and whose arrows are commutative squares in $\cC$. Denote $(X, Y)$ a distinguished morphism $f: Y\to X$, \ie an object of $\cC^{\square}$; a morphism is a pair $\gamma = (h, k): (X, Y)\to (X', Y')$ where $h: X\to X'$ and $k: Y\to Y'$ are morphisms of $\cC$ such that $hf=f'k$ if $f$ and $f'$ are distinguished morphisms defining $(X,Y)$ and $(X', Y')$ respectively. For a triple $(X,Y)$, $(X,Z)$ and $(Y,Z)$ in $\cC^{\square}$ we mean the following factorisation diagram of $\cC^{\square}$ 
$$
\xymatrix{Z\ar[r]^-{f}  \ar[d]_-{||} & Y \ar[d]^-{g} \\
Z \ar@{>}[r]^-{gf}\ar[d]_-{f}  & X\ar[d]_-{||}\\
Y \ar@{>}[r]^-{g} & X}
$$
where $f : Z \to Y$ and $g: Y \to X$ are distinguished morphisms and so it is $gf: Z\to X$ defining $(X, Z)$. We have canonical morphisms $\alpha \df (g, id): (Y,Z)\to (X,Z)$ and $\beta \df (id, f): (X,Z)\to (X,Y)$ whose composition is the morphism $(g, f) = \beta\alpha: (Y,Z)\to (X,Y)$ given by the outer square in the previous diagram. 

Consider an $R$-linear abelian category $\cA$ and a collection of objects as $i\in \Z$ varies
$$(X,Y)\in \cC^\square\leadsto H_i (X,Y)\in \cA$$
and for each morphism $\gamma: (X, Y) \to (X', Y')$ in $\cC^\square$  a morphism $\gamma_i : H_i(X, Y) \to H_i(X', Y')$  in $\cA$ together with an extra connecting morphism $\partial_i :H_i(X,Y) \to H_{i-1}(Y,Z)$ associated with the triple $(X,Y)$, $(X,Z)$ and $(Y,Z)$ in $\cC^{\square}$, in such a way that 
\begin{itemize}
\item[{\it i)}] $H = \{H_i\}_{i\in\Z}$ is a family of functors  $H_i : \cC^{\square}\to \cA$,
\item[{\it ii)}] we have the long exact sequence of the triple
$$\cdots\to H_i(Y,Z) \longby{\alpha_i}  H_i(X,Z) \longby{\beta_i}  H_i(X,Y) \longby{\partial_i} H_{i-1}(Y,Z)\to H_{i-1}(X,Z) \to\cdots $$ and
\item[{\it iii)}] this long exact sequence is natural with respect to $\partial$-cubes. The latter property being explained as follows: considering a $\partial$-cube, \ie the following 
$$\xymatrix{(Y,Z)\ar[r]^-{\beta\alpha}  \ar[d]_-{\gamma} & (X,Y) \ar[d]^-{\delta} \\
(Y',Z')\ar@{>}[r]^-{\beta'\alpha'} & (X',Y')}$$
commutative square in $\cC^\square$, we get an induced morphism $\kappa :(X,Z)\to (X',Z')$ such that $\kappa\alpha = \alpha'\gamma$ and $\delta\beta = \beta'\kappa$ and we obtain the following induced diagram in $\cA$ 
$$\xymatrix{\cdots \ar[r]^-{} & H_i(Y,Z)\ar[r]^-{\alpha_i}  \ar[d]^-{\gamma_i} & H_i(X,Z) \ar[d]^-{\kappa_i}\ar[r]^-{\beta_i} & H_i(X,Y) \ar[r]^-{\partial_i}  \ar[d]^-{\delta_i} &  H_{i-1}(Y,Z)\ar[r]^-{}  \ar[d]^-{\gamma_{i-1}} & \cdots\\
\cdots \ar[r]^-{} & H_i(Y',Z')\ar[r]^-{\alpha_i'}  & H_i(X',Z') \ar[r]^-{\beta_i'} & H_i(X',Y') \ar[r]^-{\partial_i'}  &  H_{i-1}(Y',Z') \ar[r]^-{}  & \cdots}$$
for which, the naturality requirement is that $\gamma_{i-1}\partial_i = \partial_i' \delta_i$ (note that we thus obtain a morphism of long exact sequences as the other two squares are commutative by functoriality). 
\end{itemize} 

A morphism $\varphi : H\to H'$ for $H$ and $H'$ having values in $\cA$, shall be a collection of natural transformations $\varphi_i : H_i\to H'_i$ which are compatible with the $\partial_i$ \ie such that the following
$$\xymatrix{H_i(X,Y)\ar[r]^-{\partial_i}  \ar[d]_-{\varphi_i} & H_{i-1}(Y,Z) \ar[d]^-{\varphi_{i-1}} \\
H_i'(X,Y)\ar@{>}[r]^-{\partial_i} & H_{i-1}'(Y,Z) }$$
is commutative for any triple.
All this is modelled on the regular homology theory \cite{BV}, see \cite[\S 3.1]{BV} for details. 
\begin{defn} \label{relative}
A \emph{relative homology} $H$ on $\cC^{\square}$ with values in $\cA$ shall be given by the set of data and conditions listed in i) -- iii) above. We shall denote by $\Hom (\cC^\square, \cA)$ the category of homologies on $\cC^\square$ with values in $\cA$.  We shall adopt the same terminology as for complexes by saying that $H$ is bounded above, below and concentrated in non positive or non negative degrees, \eg this latter means that $H = \{H_i\}_{i\in\Z}$ is such that $H_i=0$ for $i<0$. 

Dually, there is a corresponding notion of \emph{relative cohomology}: it is given by a family of functors $H = \{H^i\}_{i\in\Z} : \cC^{\square , op}\to \cA$ together with the connecting morphism $\partial^i : H^{i}(Y,Z)\to H^{i+1}(X,Y)$ and the long exact sequence of the triple which satisfies the naturality condition with reversed arrows. Denote $\Coh (\cC^\square, \cA)$ the category of relative cohomologies. 
 \end{defn}
 If $H = \{H_i\}_{i\in\Z} : \cC^{\square}\to \cA$ is a relative homology then $H^{op}$ given by $\{H^i=H_i \}_{i\in\Z} : \cC^{\square , op}\to \cA^{op}$ is the opposite relative cohomology and $\Hom (\cC^\square, \cA^{op})\cong \Coh (\cC^\square, \cA)$.
Relative homology and cohomology is fitting Eilenberg-Steenrod axiomatic approach \cite{ES}.
\begin{remarks} \label{partialrmk}
a) Relative homology is also a generalisation of Grothendieck exact covariant $\partial^*$-functor \cite[\S 2.1]{Gr} and Cartan-Eilenberg connected sequence of functors \cite[Chap.\,III]{CE}, see Lemma \ref{partial} below for a comparison. 

b) Note that we can make up the category $\cC^\blacksquare$ of $\partial$-cubes. This is the full subcategory of the category of functors from ${\bf 2}$ to $\cC^\square$ whose objects are the triples, \ie the morphisms $\partial \df (g, f) = \beta\alpha: (Y,Z)\to (X,Y)$, and the morphisms are $\partial$-cubes.   
\end{remarks}
For an $R$-linear exact functor $F:\cA \to \cA'$ and $H\in \Hom (\cC^\square, \cA)$ we obtain $FH = \{FH_i\}_{i\in \Z} \in \Hom (\cC^\square, \cA')$ by composition. This is making up a 2-functor 
$$\cA \leadsto  \Hom (\cC^\square, \cA): \Ex_R \to \Cat$$
from the category $\Ex_R$ of $R$-linear abelian categories and exact functors to categories. 
\begin{defn}[Nori diagram]
Call \emph{Nori diagram} $D^\square$ of $\cC^\square$ the following diagram: 
\begin{itemize}
\item[{\it i)}] the vertices are $(X,Y, i)$ where $(X, Y)$ is an object of $\cC^\square$ and $i\in \Z$;
\item[{\it ii)}] the arrows are $\gamma: (X, Y, i)\to (X', Y', i)$ for each morphism 
$(X, Y)\to (X',Y')$ in $\cC^\square$ and $\partial: (X,Y, i) \to (Y,Z, i-1)$ the extra connecting edge of the triple and $i\in \Z$, \ie corresponding to the morphism $(g, f) : (Y,Z)\to (X,Y)$ for  $f : Z \to Y$ and $g: Y \to X$ distinguished morphisms.
\end{itemize}
\end{defn}
We have:
\begin{thm}[Universal relative homology] \label{mixed}
For any category $\cC$ along with a distinguished subcategory the functor $\Hom (\cC^\square, - )$ is representable, \ie  there is a universal relative homology $$H = \{ H_i\}_{i\in \Z} :\cC^{\square}\to \cA_\partial(\cC)$$
with values in $\cA_\partial(\cC)$ a $R$-linear abelian category. If $F:\cC\to \cD$ is a functor compatible with distinguished subcategories, \ie inducing a functor $F^\square: \cC^{\square}\to \cD^{\square}$, then there is a unique $R$-linear exact functor $F_\partial : \cA_\partial (\cC)\to \cA_\partial (\cD)$ such that $F_\partial H^\cC =H^\cD F^\square$ where $H^\cC$ and $H^\cD$ are the universal relative homologies.

\end{thm}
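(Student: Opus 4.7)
I would model the proof on that of Theorem \ref{pure}, starting from the larger diagram $D^\square$ (the Nori diagram) in place of the underlying graph of $\cC$. The plan is: (i) invoke Theorem \ref{unrep} to obtain the universal $R$-linear abelian representation $\Delta : D^\square \to \Ab_R(D^\square)$; (ii) identify a Serre subcategory $\cS \subset \Ab_R(D^\square)$ whose vanishing in the localisation encodes exactly the three clauses of Definition \ref{relative} --- functoriality, the long exact sequence of a triple, and naturality of $\partial$ on $\partial$-cubes; (iii) set $\cA_\partial(\cC)\df \Ab_R(D^\square)/\cS$ and let $H=\{H_i\}_{i\in\Z}$ be the image of $\Delta$ under the quotient; (iv) deduce the representability and the functoriality clause from the universal property of the quotient.

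\textbf{The thick subcategory.} The generators of $\cS$ should come in four families. The first is the functoriality family $\im(\Delta_{\gamma\gamma'} - \Delta_\gamma\Delta_{\gamma'})$ and $\im(\Delta_{\mathrm{id}} - \mathrm{id})$, as in the proof of Theorem \ref{pure}. The second consists, for every triple $(X,Y)$, $(X,Z)$, $(Y,Z)$ in $\cC^\square$ and every $i\in\Z$, of the objects $\im(\Delta_{\beta_i}\Delta_{\alpha_i})$, $\im(\Delta_{\partial_i}\Delta_{\beta_i})$ and $\im(\Delta_{\alpha_{i-1}}\Delta_{\partial_i})$; adding these forces the three successive compositions in the sequence to vanish in the quotient. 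The third consists of the ``homology objects'' $\ker(\Delta_{\beta_i})/(\im(\Delta_{\alpha_i})\cap\ker(\Delta_{\beta_i}))$ and the two analogues centred at $\Delta(X,Y,i)$ and $\Delta(Y,Z,i-1)$; together with the second family they force strict exactness at each node. The fourth family, for every $\partial$-cube $(\gamma,\kappa,\delta)$ and every $i$, consists of $\im(\Delta_\gamma \Delta_{\partial_i} - \Delta_{\partial'_i}\Delta_\delta)$, encoding the naturality condition \textit{iii)}. Let $\cS$ be the thick subcategory of $\Ab_R(D^\square)$ generated by these four families.

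\textbf{Universal property and functoriality.} A relative homology $H'\in\Hom(\cC^\square,\cA)$ is in particular a representation of $D^\square$ in $\cA$ (vertex $(X,Y,i)\mapsto H'_i(X,Y)$, ordinary edge $\mapsto$ morphism induced by functoriality, edge $\partial\mapsto$ connecting morphism), so by Theorem \ref{unrep} it determines, essentially uniquely, an $R$-linear exact functor $F_{H'}: \Ab_R(D^\square)\to\cA$. Since $H'$ satisfies \textit{i)--iii)} of Definition \ref{relative}, the image under $F_{H'}$ of each generator of each of the four families above is zero in $\cA$, hence $F_{H'}(\cS)=0$ and $F_{H'}$ factors uniquely as $r_{H'}\circ\pi$ where $\pi:\Ab_R(D^\square)\onto\cA_\partial(\cC)$ is the quotient. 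Conversely, composing any exact $r:\cA_\partial(\cC)\to\cA$ with the canonical $H$ produces a relative homology, and these two assignments are mutually inverse equivalences $\Hom(\cC^\square,\cA)\cong\Ex_R(\cA_\partial(\cC),\cA)$, natural in $\cA$ by functoriality of Theorem \ref{unrep}. The functoriality claim is then automatic: given $F:\cC\to\cD$ compatible with the distinguished subcategories, $H^\cD\circ F^\square$ is a relative homology in $\cA_\partial(\cD)$ on $\cC^\square$, and the universal property just established delivers the unique exact $F_\partial\df r_{H^\cD F^\square}:\cA_\partial(\cC)\to\cA_\partial(\cD)$ with $F_\partial H^\cC = H^\cD F^\square$.

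\textbf{Main obstacle.} The delicate step is verifying that long exactness, an ``$=$'' between subobjects, really can be enforced by quotienting by a Serre subcategory. Imposing that a composition vanishes is painless, since $\im(gf)\in\cS$ implies $gf=0$ in the localisation; the subtler point is that one must also add a quotient of the form $\ker g/\im f$ (well defined only after the composition is killed) and check, using the fact that the quotient functor is exact and that the relevant kernels/images in $\Ab_R(D^\square)/\cS$ are computed from their representatives in $\Ab_R(D^\square)$, that the resulting localised sequence becomes exact at each spot. One must also verify that these generators interact correctly with the naturality family so that the induced $\partial_i$ on $H_i=\pi\Delta_i$ is actually a natural transformation of functors on $\cC^\square$ indexed by triples, which is precisely what allows $H$ to belong to $\Hom(\cC^\square,\cA_\partial(\cC))$ and thereby to correspond to $\mathrm{id}_{\cA_\partial(\cC)}$ under the equivalence.
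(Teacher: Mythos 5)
Your proposal is correct and follows essentially the same route as the paper: regard relative homologies as representations of the Nori diagram $D^\square$, apply Theorem \ref{unrep}, and quotient $\Ab_R(D^\square)$ by the thick subcategory encoding functoriality, the complex of a triple, exactness, and $\partial$-naturality, then verify the equivalence $\Hom(\cC^\square,\cA)\cong\Ex_R(\cA_\partial(\cC),\cA)$ and deduce the functoriality clause exactly as the paper does. The only cosmetic difference is that the paper quotients in two stages (first killing the compositions to obtain $\Ab_R(D^\square)^c$, where the complexes $H^c_*$ live, and then killing their homology objects), whereas you do it in one step via the objects $\ker\Delta_\beta/(\im\Delta_\alpha\cap\ker\Delta_\beta)$, which are already well defined before the compositions vanish; since the quotient functor is exact, both yield the same category.
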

\begin{proof} Applying Thoerem \ref{unrep} to the Nori diagram $D^\square$ of  $\cC^\square$ we  obtain the universal representation $\Delta : D^\square\to \Ab_R(D^\square)$. Consider the thick subcategory of $\Ab_R(D^\square)$ generated by the following objects: 
\begin{itemize}
\item $\im (\Delta_{id} -id)$, $\im (\Delta_{\gamma\delta} - \Delta_{\gamma}\Delta_{\delta})$, for the functoriality axiom, 
\item $\im (\Delta_{\gamma}\Delta_{\partial} - \Delta_{\partial'}\Delta_{\delta})$, associated to the $\partial$-cubes for the naturality axiom and 
\item $\im \Delta_{\beta}\Delta_{\alpha}$, $\im \Delta_{\partial}\Delta_{\beta}$, 
$\im \Delta_{\alpha}\Delta_{\partial}$ for the complex associated to the triple. 
\end{itemize}
Let $\Ab_R(D^\square)^c$ be the resulting quotient abelian category and let $H^c$ be the image of $\Delta$ under the projection $\Ab_R(D^\square)\onto \Ab_R(D^\square)^c$. In this category $\Ab_R(D^\square)^c$ we now have the following complex associated to each triple 
$$H^c_* : \cdots\to H_i^c(Y,Z) \longby{\alpha_i}  H_i^c(X,Z) \longby{\beta_i}  H_i^c(X,Y) \longby{\partial_i} H_{i-1}^c(Y,Z)\to \cdots $$ 
We can impose the exactness of these complexes for all triples by taking homologies $H_i(H^c_*)$ of all these complexes
 and pass to a further quotient $\cA_\partial(\cC)\df \Ab_R(D^\square)^c/ \ll H_i(H^c_*)\gg$. We thus have  
$$\xymatrix{D^\square\ar[r]^{\Delta\ \ \ }& \Ab_R(D^\square)\ar[r]^{} \ar@/^1.7pc/[rr]^{\pi } & \Ab_R(D^\square)^c\ar[r]^{} &\cA_\partial(\cC)}$$
and we obtain  $(X, Y, i)\leadsto H_i(X, Y)\df \pi (\Delta_{(X, Y, i)})$ a representation $H \in \Rep (D^\square, \cA_\partial(\cC))$ given by the image of $\Delta$ under the projection: by construction $(X, Y)\in \cC^\square \leadsto H_i(X, Y)\in \cA_\partial(\cC)$ is functorial and satisfies the other axioms providing   $H = \{ H_i\}_{i\in \Z} :\cC^{\square}\to \cA_\partial(\cC)$ a relative homology. 

Now, for any $R$-linear abelian category $\cA$, observe that we can regard any relative homology in $\Hom (\cC^{\square}, \cA)$ as an object of  $\Rep (D^{\square}, \cA)$; moreover, a morphism of  homologies is precisely a morphism as representations of $D^\square$ in $\cA$  showing that $\Hom (\cC^{\square}, \cA)\subset \Rep (D^{\square}, \cA)$ is a full subcategory. By Theorem \ref{unrep}, we have that $\Rep (D^\square, \cA)\cong \Ex_R (\Ab_R(D^\square), \cA)$ so that a relative homology $K$ yields a unique (up to natural equivalence)  $R$-linear exact functor $F_K : \Ab_R(D^\square)\to \cA$ such that $F_K (\Delta)= K$ as representations, \ie the following
$$\xymatrix{D^{\square}\ar[r]^{\Delta \  \  \ } \ar[d]_K & \Ab_R(D^\square)  \ar[dl]^{F_K} \\  \cA}$$
is commutative. 
This functor $F_K$ factors through the quotient $\cA_\partial(\cC)$ providing an exact (not necessarily faithful) functor  
$r_K: \cA_\partial(\cC)\to \cA$. In fact, since $F_K$ is exact it is sending to zero all objects in the kernel of $\Ab_R(D^\square)\onto \Ab_R(D^\square)^c$, \eg $F_K(\im \Delta_{\beta}\Delta_{\alpha}) = \im \beta_i \alpha_i = 0$ since $ \beta_i \alpha_i : K_i(Y,Z)\to K_i (X, Y)$ is the zero morphism from the chain complex condition; therefore, $F_K$ yields an exact functor $F_K^c : \Ab_R(D^\square)^c\to \cA$ and since $F_K^c (H_i(H^c_*)) = H_i(F_K (H^c_*)) = H_i(K_*)=0$ we get an induced functor $r_K: \cA_\partial(\cC)\to \cA$ as claimed. Actually, 
$\ker \pi \subseteq  \ker F_K$ and if we consider the composition of the quotient $$\cA_\partial(\cC)=  \Ab_R(D^\square)/\ker \pi \onto \Ab_R(D^\square)/\ker F_K$$ with the faithful exact functor $ \Ab_R(D^\square)/\ker F_K\into \cA$ we get $r_K$ as claimed. 
Note that any exact functor $F: \cA_\partial(\cC)\to \cA$ yields $FH\in \Hom (\cC^{\square}, \cA)$ and $r_{FH}\cong F$ by unicity of $F_{FH}$.
We thus obtain the refined equivalence
$$K\leadsto r_K: \Hom (\cC^{\square}, \cA)\cong \Ex_R (\cA_\partial(\cC), \cA)$$
which is natural in $\cA$. Finally, $H\in \Hom (\cC^\square, \cA_\partial(\cC))$ yields $r_H = id$ by construction and therefore $H$ is the universal relative homology. 

For a functor $F^\square: \cC^{\square}\to \cD^{\square}$ we have that $H^\cD F^\square\in \Hom (\cC^\square, \cA_\partial(\cD))$ is a relative homology and therefore $F_\partial\df r_{H^\cD F}$ yields the exact functor as claimed. 
\end{proof}
\begin{remark}\label{boundrel}
The same arguments in the proof of the Theorem \ref{mixed} provide representability of the sub-functors of  $\Hom (\cC^\square, - )$ given by relative homologies whose degrees are concentrated in an interval $[a, b]$ where $a$ and $b$ are integers such that $a+1<b$ (including $a =-\infty$ or $b=+\infty$). We can add $H_i^c(X, Y)$ for $i\notin [a,b]$ in the list of generators of the thick subcategory of  $\Ab_R (D^\square)^c$ when imposing the exactness of the complexes $H^c_*$ and get that $H_i (X, Y)= 0$ for $i\notin [a,b]$;   we obtain decorated abelian categories $\cA_\partial^{[a, b]}(\cC)$  along with a quotient  $\cA_\partial (\cC)\onto \cA_\partial^{[a, b]}(\cC)$. For example, if $a\geq 0$ we have that $$H_a (Y, Z)\to H_a (X, Z)\to H_a (X, Y)\to 0$$ is exact in  $\cA_\partial^{[a, b]}$ for the universal relative homology sequence of a triple. 
\end{remark}
Note that we can deal with homologies in Grothendieck categories: we get a quotient $\Ind \Ab_R(D^\square)\onto\Ind \cA_\partial(\cC)$ and any exact functor $\cA_\partial(\cC)\to \cA$ in a Grothendieck category uniquely extend to $\Ind \cA_\partial(\cC)$, see Proposition \ref{indadjoint}.
\begin{cor} \label{induni}
The relative homology $H = \{ H_i\}_{i\in \Z} :\cC^{\square}\to \Ind \cA_\partial(\cC)$ given by the image of the universal relative homology $H$ under 
$\cA_\partial(\cC)\into \Ind \cA_\partial(\cC)$ is the universal Grothendieck relative homology, \ie it represents the 2-functor $\Hom (\cC^\square, -)$ over Grothendieck categories. 
\end{cor}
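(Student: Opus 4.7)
The plan is to bootstrap the representability statement of Theorem \ref{mixed} through the $\Ind$-completion by invoking Proposition \ref{indadjoint}, exactly as was done for plain representations in Corollary \ref{indrep}. Concretely, let $\cA$ be any $R$-linear Grothendieck category and let $K\in \Hom (\cC^\square, \cA)$ be a relative homology with values in $\cA$. Since $\cA$ is in particular an $R$-linear abelian category, Theorem \ref{mixed} produces a unique (up to unique isomorphism) $R$-linear exact functor $r_K : \cA_\partial(\cC)\to \cA$ such that $r_KH^{\cC}=K$, where $H^{\cC}$ denotes the universal relative homology with values in $\cA_\partial(\cC)$.

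Next I would apply Proposition \ref{indadjoint} to $r_K$: this gives a unique extension $\tilde r_K : \Ind \cA_\partial(\cC)\to \cA$ along the canonical embedding $\cA_\partial(\cC)\into \Ind \cA_\partial(\cC)$ which is exact and preserves (small) filtered colimits. Denoting by $\iota: \cA_\partial(\cC)\into \Ind \cA_\partial(\cC)$ the embedding and by $\iota H^\cC = H$ the Grothendieck universal candidate, one checks that $\tilde r_K\circ H = \tilde r_K\circ \iota\circ H^{\cC} = r_K\circ H^{\cC} = K$, so that $H$ indeed has the desired universal property at the level of objects. For morphisms, a morphism $\varphi : K\to K'$ of relative homologies yields by Theorem \ref{mixed} a natural transformation $r_\varphi : r_K\to r_{K'}$, and by functoriality of $\Ind$ together with the adjunction/extension property of Proposition \ref{indadjoint} this extends uniquely to a natural transformation $\tilde r_K\to \tilde r_{K'}$; this gives a functor from $\Hom (\cC^\square, \cA)$ to the category $\Ex_R^{\rm fc}(\Ind\cA_\partial(\cC), \cA)$ of $R$-linear exact filtered-colimit-preserving functors.

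For the quasi-inverse and hence the equivalence, given any $F\in \Ex_R^{\rm fc}(\Ind\cA_\partial(\cC), \cA)$ one forms $F\circ H\in \Hom (\cC^\square, \cA)$; the restriction $F\circ \iota$ is exact $\cA_\partial(\cC)\to \cA$ with $(F\iota) H^\cC = FH$, hence by Theorem \ref{mixed} coincides with $r_{FH}$, and by the uniqueness clause in Proposition \ref{indadjoint} this forces $F = \tilde r_{FH}$. Combined with the compatibility on morphisms, this yields the desired natural equivalence
$$\Hom (\cC^\square, \cA)\cong \Ex_R^{\rm fc}(\Ind\cA_\partial(\cC), \cA)$$
which is natural in $\cA$ varying among $R$-linear Grothendieck categories, since both sides are natural in $\cA$ under composition with exact colimit-preserving functors. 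The only step requiring genuine care, rather than formal unwinding, is verifying the uniqueness of the extension through $\Ind$ at the level of \emph{natural transformations} and the naturality in $\cA$; but both are immediate consequences of the 2-universal property in Proposition \ref{indadjoint}. No new axiom-checking is required, since the defining exact sequences and naturality data of a relative homology are preserved by the embedding $\iota$ and hence automatically carried over.
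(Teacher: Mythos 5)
Your argument is correct and is essentially the paper's own: the Corollary is stated without a separate proof precisely because it follows, as you do, by combining the representability of Theorem \ref{mixed} with the 2-universal property of the Ind-completion in Proposition \ref{indadjoint}, exactly parallel to Corollary \ref{indrep}. The only point worth making explicit is that $F\iota\cong r_{FH}$ uses the uniqueness clause of Theorem \ref{mixed} (as noted at the end of its proof), which you do invoke correctly.
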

Passing to cohomologies we see that the opposite diagram $D^{\square , op}$ in the proof of Theorem \ref{mixed} is providing the universal relative cohomology and applying Theorem \ref{dualrep} we obtain:
\begin{cor}\label{dualrelhom}
The functor of relative cohomologies $\Coh (\cC^{\square}, - )$ is representable and the universal relative cohomology $H^{op} = \{ H^i\}_{i\in \Z} :\cC^{\square , op}\to \cA_\partial (\cC)^{op}$ is the opposite of the universal relative homology.\end{cor}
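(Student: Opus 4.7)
The plan is to deduce this cohomological statement directly from Theorem \ref{mixed} by passing to opposite categories, much as Corollary \ref{dualhom} was obtained from Theorem \ref{pure} via Theorem \ref{dualrep}. The starting observation is the equivalence
\[
\Hom (\cC^\square, \cA^{op})\cong \Coh (\cC^\square, \cA)
\]
which comes essentially by definition: given $H =\{H_i\}$ a relative homology on $\cC^{\square}$ with values in $\cA^{op}$, reading all arrows in $\cA$ reverses $\alpha_i, \beta_i$ and the connecting morphism $\partial_i:H_i(X,Y)\to H_{i-1}(Y,Z)$ becomes a morphism $\partial^i : H^{i-1}(Y,Z)\to H^i(X,Y)$ (after re-indexing $H^i\df H_{i}$, up to the degree shift convention of Definition \ref{relative}); the exactness of the triple and the naturality of $\partial$-cubes transport verbatim to the cohomological axioms. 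Symmetrically, any $H\in\Coh(\cC^\square,\cA)$ yields an object of $\Hom(\cC^\square,\cA^{op})$, and these assignments are inverse equivalences of categories, 2-natural in $\cA$.

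With this in hand, the key step is to combine the universal property of Theorem \ref{mixed}, applied with values in $\cA^{op}\in\Ex_R$, with the elementary fact that an $R$-linear exact functor $\cA_\partial(\cC)\to \cA^{op}$ is the same thing as an $R$-linear exact functor $\cA_\partial(\cC)^{op}\to \cA$; that is,
\[
\Ex_R(\cA_\partial(\cC),\cA^{op})\cong \Ex_R(\cA_\partial(\cC)^{op},\cA),
\]
naturally in $\cA$. Stringing the equivalences together gives
\[
\Coh(\cC^\square,\cA)\cong \Hom(\cC^\square,\cA^{op})\cong \Ex_R(\cA_\partial(\cC),\cA^{op})\cong \Ex_R(\cA_\partial(\cC)^{op},\cA),
\]
showing that $\Coh(\cC^\square,-)$ is 2-represented by $\cA_\partial(\cC)^{op}$.

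To identify the universal object explicitly, one tracks the identity $id_{\cA_\partial(\cC)^{op}}$ through this chain: under the last equivalence it corresponds to $id_{\cA_\partial(\cC)}:\cA_\partial(\cC)\to \cA_\partial(\cC) =(\cA_\partial(\cC)^{op})^{op}$, which in turn corresponds via Theorem \ref{mixed} to the universal relative homology $H\in\Hom(\cC^\square,\cA_\partial(\cC))$, and this finally matches the opposite $H^{op}\in \Coh(\cC^\square,\cA_\partial(\cC)^{op})$ under the first equivalence. Thus $H^{op}=\{H^i\df H_i\}_{i\in\Z}:\cC^{\square,op}\to \cA_\partial(\cC)^{op}$ is the universal relative cohomology.

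There is essentially no obstacle: the only mildly delicate point is bookkeeping the direction of the connecting morphism and the indexing convention to confirm that the dual of the triple sequence is indeed a cohomological triple sequence with the prescribed $\partial^i$, and that naturality on $\partial$-cubes dualises correctly; once that translation is made, representability is formal from Theorem \ref{mixed} and (for the abstract shape of the argument) Theorem \ref{dualrep}.
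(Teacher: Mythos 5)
Your argument is correct, and it takes a slightly different (and in fact cleaner) route than the paper. The paper proves the corollary by going back to the construction: it dualizes the Nori diagram, i.e.\ it observes that the opposite diagram $D^{\square,op}$ in the proof of Theorem \ref{mixed} produces the universal relative cohomology, and then invokes the abstract duality $\Ab_R(D^{op})\cong \Ab_R(D)^{op}$ of Theorem \ref{dualrep}; implicitly this requires that the relations imposed in the quotient $\Ab_R(D^{\square})\onto \cA_\partial(\cC)$ (functoriality, complexes of triples, exactness, $\partial$-cube naturality) correspond under that diagram-level duality to the dual relations on $\Ab_R(D^{\square,op})$. You instead never descend to the diagram level: you dualize the already-established universal property, chaining $\Coh(\cC^\square,\cA)\cong \Hom(\cC^\square,\cA^{op})\cong \Ex_R(\cA_\partial(\cC),\cA^{op})\cong \Ex_R(\cA_\partial(\cC)^{op},\cA)$, which is exactly the shape of the proof of Theorem \ref{dualrep} but applied one level up, to the 2-functor $\Hom(\cC^\square,-)$ rather than to $\Rep(D,-)$. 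What your route buys is that the only non-formal input is Theorem \ref{mixed} itself plus the elementary identification of exact functors into $\cA^{op}$ with exact functors out of the opposite category (together with the translation of axioms under $\Hom(\cC^\square,\cA^{op})\cong\Coh(\cC^\square,\cA)$, which the paper records right after Definition \ref{relative}); there is no need to re-examine the quotient construction. What the paper's route buys is an explicit description of the representing object as a decorated quotient of $\Ab_R(D^{\square,op})$, i.e.\ it exhibits the universal relative cohomology as arising from the dual diagram rather than merely identifying the representing category abstractly as $\cA_\partial(\cC)^{op}$. Your tracking of the identity functor to identify the universal object as $H^{op}$, and your care with the re-indexing of the connecting morphism, are both as they should be.
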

Recall \cite{BV} where the (mixed) regular homology theory $\T$ has been introduced and the resulting abelian category of constructible $\T$-motives $\cA[\T]$  is  presented as the (Barr) exact completion of the syntactic category $\cC_\T^{\rm reg}$ of the regular theory. Recall that $\T$-motives are given by $\Ind \cA[\T]$. We already noted in \cite[Thm.\,2.7 \& Cor.\,2.9]{BVP} that $\cA[\T]$ can be recovered as a quotient of Freyd's abelian category $\Ab (D^\square)$ for $D^\square$ the Nori diagram of  $\cC^\square$. We here show a very simple and direct argument to see this.
\begin{propose}\label{Tmotives}
There is a canonical equivalence $$\cA[\T]\cong \cA_\partial(\cC)$$ of abelian categories under which the universal model $H^\T$ corresponds to $H$  the universal  relative homology. Moreover, $\T$-models and relative homologies coincide. For any relative homology $K$, if $\T_K$ is the theory obtained by adding all regular axioms which are valid in the model $K$ then there is an equivalence  
$$\cA[\T_K]\cong \cA (K)\df \Ab_R(D^\square)/\ker F_K$$
under which the universal model corresponds to the universal relative homology generated by $K$.
\end{propose}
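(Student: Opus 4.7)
The plan is to exploit Yoneda: both $\cA[\T]$ and $\cA_\partial(\cC)$ will be shown to represent the same 2-functor on $\Ex_R$, namely that of relative homologies, and a canonical equivalence then follows. The universal model $H^\T$ and the universal relative homology $H$ will correspond to each other because both are the image of the identity functor under the resulting equivalence.

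First I would unpack the definition of the regular theory $\T$ from \cite{BV}: its sorts are indexed by $(X,Y,i) \in \cC^\square \times \Z$, its function symbols encode the morphisms of $\cC^\square$ together with the connecting maps $\partial_i$, and its regular axioms are precisely the functoriality, chain-complex, long-exact-sequence, and naturality-under-$\partial$-cubes conditions of Definition \ref{relative}. Once this dictionary is spelled out, interpreting $\T$ in an $R$-linear abelian category $\cA$ amounts exactly to specifying a relative homology with values in $\cA$, and morphisms of $\T$-models are exactly morphisms of relative homologies; this is the ``coincidence'' of $\T$-models with relative homologies asserted in the statement. Combining with Theorem \ref{mixed} yields $\T\text{-Mod}(\cA) \cong \Hom(\cC^\square, \cA) \cong \Ex_R(\cA_\partial(\cC), \cA)$ naturally in $\cA \in \Ex_R$. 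On the other hand, by construction of $\cA[\T]$ as the $R$-linear (Barr) exact completion of the syntactic category $\cC_\T^{\rm reg}$, one has $\T\text{-Mod}(\cA) \cong \Ex_R(\cA[\T], \cA)$ by \cite[Thm.\,2.7 \& Cor.\,2.9]{BVP}. Yoneda then produces a canonical equivalence $\cA[\T] \cong \cA_\partial(\cC)$, and tracing the identity exact functor on each side shows $H^\T \leftrightarrow H$.

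For the second assertion, any relative homology $K : \cC^\square \to \cA$ induces the exact functor $F_K : \Ab_R(D^\square) \to \cA$ from the proof of Theorem \ref{mixed}, so by definition $\cA(K) = \Ab_R(D^\square)/\ker F_K$. The theory $\T_K$ obtained by joining to $\T$ all regular sequents valid in $K$ has, as models in any $\cA'$, precisely those relative homologies $K' : \cC^\square \to \cA'$ whose associated exact functor $F_{K'}$ vanishes on $\ker F_K$; equivalently, those that factor through the quotient $\Ab_R(D^\square) \onto \cA(K)$. Hence $\T_K\text{-Mod}(-) \cong \Ex_R(\cA(K), -)$ on $\Ex_R$, and the same Yoneda argument yields $\cA[\T_K] \cong \cA(K)$, with the universal $\T_K$-model corresponding to the image of the universal relative homology under $\cA_\partial(\cC) \onto \cA(K)$, i.e.\ to the universal relative homology generated by $K$ in the terminology of the introduction.

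The main delicate point I expect is purely a bookkeeping one: checking that the regular axioms of $\T$ (and then of $\T_K$) translate bijectively into the defining data of relative homologies (respectively, into their comparability with $K$), and in particular that passage to the $R$-linear Barr exact completion on the syntactic side matches the thick-subcategory quotients of $\Ab_R(D^\square)$ used in the proof of Theorem \ref{mixed} to form $\cA_\partial(\cC)$. Once this dictionary is carefully established, the equivalences claimed in the proposition are purely formal consequences of Yoneda applied to two descriptions of the same representable 2-functor.
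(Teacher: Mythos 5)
Your proposal is correct and follows essentially the same route as the paper: both sides are identified as representing the 2-functor $\Hom(\cC^\square,-)$ of relative homologies on $\Ex_R$ (using $\TMod(\cA)\cong\Ex(\cA[\T],\cA)$ from the earlier work, the coincidence of $\T$-models with relative homologies, and Theorem \ref{mixed}), and the equivalences plus the matching of universal objects follow from uniqueness of representing objects, with the $\T_K$ case handled by the same argument applied to the quotient $\cA(K)=\Ab_R(D^\square)/\ker F_K$. The only difference is presentational: you sketch the dictionary between the regular axioms of $\T$ (and $\T_K$) and the data of Definition \ref{relative}, where the paper simply cites this as holding by construction from \cite{BV}.
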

\begin{proof} Recall \cite[Prop.\,4.1.3]{BV} that, for any abelian category $\cA$, we have a natural equivalence $\TMod (\cA)\cong \Ex (\cA [\T], \cA)$ where $\TMod (\cA)$ is the category of models in $\cA$ of the theory $\T$.
By construction, see \cite[\S 3]{BV}, we have that $$\TMod (\cA)= \Hom (\cC^{\square}, \cA)$$ these categories coincide. Whence we obtain that $\Ex (\cA [\T], \cA)\cong \Ex (\cA_\partial(\cC), \cA)$ by the Theorem \ref{mixed}, for any abelian category $\cA$. Therefore we obtain the claimed equivalence. The same argument applies to $\T_K$ and the universal model for both $\T$ and $\T_K$ correspond to the respective universal relative homologies by unicity of the representing objects.
\end{proof}
The following is a reformulation of \cite[Lemma 3.1.1]{BV}.
\begin{lemma}\label{purity}
If  $H\in \Hom (\cC^\square, \cA)$ then $H_i(X, Y) =0$ for all $Y \cong X$ distinguished  isomorphisms and $i\in \Z$.
\end{lemma}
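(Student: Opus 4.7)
The strategy is a two-step reduction: first show $H_i(X,X) = 0$ for the pair $(X,X)$ coming from the identity $\mathrm{id}_X$, then transport this vanishing along an isomorphism $(X,Y) \cong (X,X)$ of $\cC^{\square}$ produced by any distinguished isomorphism $g : Y \to X$. Both steps use only the data and axioms \emph{i)}--\emph{iii)} of Definition \ref{relative}.

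\textbf{Step 1 (vanishing at the diagonal).} Since the distinguished morphisms form a subcategory, every identity $\mathrm{id}_X$ is distinguished. Taking the composable pair $p = q = \mathrm{id}_X$, the three objects of the resulting triple all coincide with $(X,X)$, and the morphisms $\alpha = (q,\mathrm{id})$ and $\beta = (\mathrm{id},p)$ of $\cC^{\square}$ are both the identity of $(X,X)$. For each $i \in \Z$ axiom \emph{ii)} then gives
$$\cdots \longby{\partial_{i+1}} H_i(X,X) \longby{\mathrm{id}} H_i(X,X) \longby{\mathrm{id}} H_i(X,X) \longby{\partial_i} H_{i-1}(X,X) \to \cdots,$$
and exactness at the middle term forces $H_i(X,X) = \im(\mathrm{id}) = \ker(\mathrm{id}) = 0$.

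\textbf{Step 2 (isomorphism in $\cC^{\square}$ from a distinguished iso).} Given a distinguished isomorphism $g : Y \to X$, the commutative squares
$$\xymatrix{Y \ar[r]^g \ar[d]_g & X \ar[d]^{\mathrm{id}} \\ X \ar[r]_{\mathrm{id}} & X} \qquad \text{and} \qquad \xymatrix{X \ar[r]^{\mathrm{id}} \ar[d]_{g^{-1}} & X \ar[d]^{\mathrm{id}} \\ Y \ar[r]_g & X}$$
define morphisms $\varphi = (\mathrm{id}_X, g) : (X,Y) \to (X,X)$ and $\psi = (\mathrm{id}_X, g^{-1}) : (X,X) \to (X,Y)$ in $\cC^{\square}$. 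The key point is that the components of a morphism in $\cC^{\square}$ are arbitrary morphisms of $\cC$, not required to be distinguished, so no hypothesis on $g^{-1}$ is needed. A direct check shows $\psi\varphi = \mathrm{id}_{(X,Y)}$ and $\varphi\psi = \mathrm{id}_{(X,X)}$, so $\varphi$ is an iso in $\cC^{\square}$.

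\textbf{Step 3 and expected obstacle.} By the functoriality clause \emph{i)}, each $H_i : \cC^{\square} \to \cA$ sends $\varphi$ to an iso $H_i(X,Y) \cong H_i(X,X)$, and the right-hand side vanishes by Step 1. The only delicate point is the tacit use that identities are distinguished; this is automatic from the word ``subcategory'' and does not have to be added as a separate hypothesis. Apart from that, the argument is formal: the whole content of the lemma is packaged into the single degenerate triple $\mathrm{id}_X,\mathrm{id}_X$, with naturality in $\partial$-cubes playing no role at all.
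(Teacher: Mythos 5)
Your proof is correct and follows essentially the same route as the paper: the paper likewise transports the vanishing along the isomorphism $(X,Y)\cong(X,X)$ in $\cC^\square$ induced by the distinguished isomorphism (via functoriality of the $H_i$), and obtains $H_*(X,X)=0$ from exactness of the identity sequence coming from the degenerate triple $X=Y=Z$. Your presentation merely reverses the order of the two steps and spells out the inverse morphism $(\mathrm{id}_X,g^{-1})$ explicitly, which the paper leaves implicit.
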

\begin{proof} If  $f: Y \by{\simeq} X$ is a distinguished  isomorphism then $(X,Y) \cong (X,X)$ given by
$$\xymatrix{Y\ar[r]^-{f}  \ar[d]_-{f} & X \ar[d]^-{||} \\
X \ar@{>}[r]^-{=} & X}$$
which yields $H_*(X,Y)\cong H_*(X,X)$ by functoriality. From the exactness of 
$$H_*(X,X) \by{id} H_*(X,X) \by{id} H_*(X,X)$$
for $X=Y=Z$ we obtain $H_*(X,X) =0$.
\end{proof}
From Lemma \ref{purity} we see that there are no non trivial constant relative homologies, in general. Note that the universal abelian category $\cA_\partial(\cC)$ is the minimal quotient of  $\Ab_R (D^\square)$ generated by the relative homology axioms and we shall get further quotients imposing further axioms, \eg see \cite{UCTII} for ordinary theories.
\begin{remark}[Transfers]\label{trans}
 We may consider homology theories with transfers as follows.  Let $R$ be a subring of $\Q$.  Consider morphisms of $\cC$, called ``finite coverings'', assuming that
\begin{itemize}
\item[{\it i)}] finite coverings are stable by composition and all isomorphisms are finite coverings, so that we have $\cF\subset\cC$ a subcategory of finite coverings; 
\item[{\it ii)}] for any finite covering $f:\tilde X\to X$ we have a well defined degree $\deg (f)\in R$ such that  $\deg (gf)= \deg (g) \deg (f)$ and $\deg (id_X)=1$. \end{itemize}
We may also assume that a distinguished morphism which is a finite covering is an isomorphism. 

An homology with transfers $H\in \Hom_\tr (\cC, \cA )$ is an homology for which we require the existence of a transfer morphism
$\tr_i^f: H_i(X)\to H_i(\tilde X)$ which depends functorially on the finite covering $f:\tilde X\to X$  and such that  
$f_i\tr_i^f : H_i(X)\to H_i(X)$ is the homothety determined by  $\deg (f)$ where $f_i :H_i(\tilde X)\to H_i(Y)$ is the morphism given by functoriality. We clearly can add  $\cF$ defining a category with transfers and obtaining a universal homology  with transfers $H^\tr : \cC \to \cA^\tr(\cC)$ by representing the functor $\Hom_\tr (\cC, - )$ given by homologies with transfers. This $H^\tr$ is obtained by considering the diagram with transfers $D_\tr$, adding to the diagram $D$ in the proof of Theorem \ref{pure} the edges $(X,  i)\to (\tilde X, i)$ for each $f: \tilde X\to X\in \cF$. Moreover, universal relative homology with transfers $H^\tr : \cC^\square \to \cA_\partial^\tr(\cC)$ exists by representing the functor $\Hom_\tr (\cC^\square, - )$ on relative homologies.  In fact, we may also add relative versions of transfers asking for compatibility with the long exact sequence of a triple and adding transfers to the Nori diagram $D_\tr^\square$ we can adapt the proof of Theorem \ref{mixed}. We get an $R$-linear exact functor 
$r_{H^\tr}:\cA_\partial (\cC)\to \cA_\partial^\tr(\cC)$. 
\end{remark}
\subsection{Homology versus relative homology}
For $\cC$ with an initial object $0$ assume that $0\to X$ is distinguished for all $X\in \cC$. We then obtain a functor 
$\cC \to \cC^{\square}$  given by  $X\leadsto (X, 0 )$. Assume given a relative homology  $H\in \Hom (\cC^{\square}, \cA )$ and set 
$$H_i(X)\df H_i(X,0)\in \cA$$ 
for the \emph{restricted} homology along $\cC \to \cC^{\square}$.
The long exact sequence of the triple $(Y, 0)\to (X, Y)$, here $Z=0$, is the following sequence in $\cA$ 
 $$\cdots\to H_i(Y) \longby{\alpha_i}  H_i(X) \longby{\beta_i}  H_i(X,Y) \longby{\partial_i} H_{i-1}(Y)\to H_{i-1}(X) \to\cdots $$
 which is named,  by convention, the long exact sequence of the pair. The restricted homology $X\leadsto H_i(X)$ yields an object  $H\in \Hom (\cC, \cA )$ and, by universality, see Theorem \ref{pure}, we obtain $r_H: \cA(\cC) \to  \cA$ the induced exact functor. 
 
 In particular, consider the universal relative homology in $\Hom (\cC^\square, \cA_\partial(\cC))$ of Theorem \ref{mixed}; by restriction along $\cC \to \cC^{\square}$ we obtain an object of $\Hom (\cC, \cA_\partial(\cC))$. We  thus obtain 
$$r_\partial : \cA(\cC) \to  \cA_\partial(\cC)$$
 an exact functor. For $H$ the universal homology $r_\partial (H)$ coincide with the restricted universal relative homology: here $H_i(X)$ is an object of $\cA(\cC)$  and $r_\partial (H_i(X))=H_i(X, 0)$, which we also denote $H_i(X)$ but regarded as an object of $ \cA_\partial(\cC)$, by a modest abuse of notation. However, note that the functor $r_\partial$ is not faithful, in general; for example, $r_\partial (H_i (0))= H_i (0,0)=0$ is vanishing, \cf Example \ref{pointexrel}. The essential image of $r_\partial$ is, by Lemma \ref{genpure}, the abelian subcategory of $\cA_\partial(\cC)$ whose objects are sub-quotients of finite products of  $H_i(X)$  for $X\in \cC$ and $i\in \Z$.
 \begin{thm}[Generating subcategory]\label{genmixed}
 The category $\cA_\partial (\cC)$ coincide with the smallest abelian subcategory which contains kernels and cokernels of morphisms between objects $H_i(X)$ for $X\in \cC$ and $i\in \Z$ and it is closed by extensions of these.  
In particular, every object of $\cA_\partial (\cC)$ is an extension of sub-quotients of $H_i(X)$ for $X\in \cC$ and $i\in \Z$. 
\end{thm}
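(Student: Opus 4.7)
The plan is to adapt the strategy of Lemma \ref{genpure} to the relative setting, using the long exact sequence of a pair to reduce all relative objects $H_i(X,Y)$ back to the restricted objects $H_i(X) = H_i(X,0)$.

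Let $\cS \subseteq \cA_\partial(\cC)$ denote the smallest full abelian subcategory containing all $H_i(X)$ for $X \in \cC$ and $i \in \Z$, closed under subobjects, quotients, and extensions; equivalently, $\cS$ is built from sub-quotients of the $H_i(X)$'s by iterated extensions, and is itself abelian with the inclusion $\iota : \cS \hookrightarrow \cA_\partial(\cC)$ being exact. The first step is to show that $\cS$ already contains each $H_i(X,Y)$: applying the long exact sequence of the triple $(X,Y), (X,0), (Y,0)$ (i.e.\ taking $Z = 0$) one gets
$$\cdots \to H_i(Y) \longby{\alpha_i} H_i(X) \longby{\beta_i} H_i(X,Y) \longby{\partial_i} H_{i-1}(Y) \longby{\alpha_{i-1}} H_{i-1}(X) \to \cdots,$$
whence the short exact sequence
$$0 \to \coker \alpha_i \to H_i(X,Y) \to \ker \alpha_{i-1} \to 0$$
exhibits $H_i(X,Y)$ as an extension of a subobject of $H_{i-1}(Y)$ by a quotient of $H_i(X)$. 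Both endpoints lie in $\cS$ by construction, hence so does $H_i(X,Y)$.

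Next, with every $H_i(X,Y)$ in $\cS$, the connecting morphisms $\partial_i: H_i(X,Y) \to H_{i-1}(Y,Z)$ and the functoriality morphisms $\gamma_i: H_i(X,Y) \to H_i(X',Y')$ are all morphisms in $\cS$, and the long exact sequences of triples remain exact in $\cS$ since $\iota$ is exact. Thus the universal relative homology $H$ of Theorem \ref{mixed} factors as $H = \iota \circ H'$ for a relative homology $H': \cC^\square \to \cS$. By the universal property applied to $H'$, there is a unique exact $R$-linear functor $r: \cA_\partial(\cC) \to \cS$ with $r \circ H = H'$. Then $\iota \circ r$ is an exact endofunctor of $\cA_\partial(\cC)$ satisfying $(\iota \circ r) \circ H = \iota \circ H' = H$, so by the uniqueness clause of Theorem \ref{mixed} it must equal the identity. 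Since $\iota$ is fully faithful, this forces $\cS = \cA_\partial(\cC)$, proving the first assertion; the ``in particular'' claim is then just the explicit description of $\cS$ as iterated extensions of sub-quotients of $H_i(X)$'s.

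The main obstacle is the subtle point in the second step: one must verify that $\cS$, though defined only by closure conditions on the restricted objects $H_i(X)$, admits the full structure of a relative homology with values in it, i.e.\ that the long exact sequences of triples in $\cA_\partial(\cC)$ remain exact once all their terms lie in $\cS$. This is ensured by the fact that a full abelian subcategory closed under kernels and cokernels has exact inclusion, whence any exact complex in $\cA_\partial(\cC)$ whose terms lie in $\cS$ is also exact in $\cS$. Once this is in place, the universality argument of Theorem \ref{mixed} runs formally, mirroring the proof of Lemma \ref{genpure}.
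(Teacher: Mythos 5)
Your proof is correct and follows essentially the same route as the paper's: the long exact sequence of the pair (the triple with $Z=0$) exhibits each $H_i(X,Y)$ as an extension of $\coker \alpha_i$ by $\ker \alpha_{i-1}$, i.e.\ of a cokernel by a kernel of morphisms between the restricted objects $H_i(X)$, and the universality argument of Theorem \ref{mixed}, adapted from Lemma \ref{genpure} exactly as you spell out, then forces any abelian subcategory with exact inclusion containing all the $H_i(X,Y)$ to be all of $\cA_\partial(\cC)$. The only cosmetic difference is that you run the argument on the thick closure of the $H_i(X)$'s rather than on the subcategory named in the statement; since your extension endpoints are precisely kernels and cokernels of maps between the $H_i(X)$'s, the identical argument applies verbatim to that smaller subcategory.
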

\begin{proof} Adapting the arguments in the proof of Lemma \ref{genpure} we have that any abelian subcategory $\cS\subseteq\cA_\partial  (\cC)$, containing the set $S\df \{ H_i(X, Y)\mid (X,Y)\in \cC^\square, i\in \Z \}$ and such that the inclusion $\iota_S : \cS\into \cA(\cC)$ is  exact coincide with $\cA_\partial  (\cC)$. This follows from the universality Theorem \ref{mixed}. By the exact sequence of the pair we see that $H_i(X, Y)$ is an extension of kernels and cokernels of morphisms between objects of the form $H_i(X)$ for $X\in \cC$ and $i\in \Z$.  Therefore, if an abelian subcategory  $\cS$ contains all these then it contains $S$ and the thick subcategory generated by 
$\{ H_i(X, 0)\mid (X,0)\in \cC^\square, i\in \Z \}$ coincide with all $\cA_\partial  (\cC)$. 
 \end{proof}
Assume that a category $\cC$ is provided with an initial object $0$ and a set of point objects. 
\begin{axiom}[Point axiom for relative homology] \label{pointrelax}
We say that a relative homology $H\in \Hom (\cC^{\square}, \cA)$ satisfies the \emph{point axiom} if $H_i (*, 0 )= 0$ for $i \neq 0$  where the canonical morphism $(*,0)\df 0\to *$ from the initial object to any point object of $\cC$ is assumed to be a distinguished morphism.
\end{axiom}
If $0\to 1$ is an isomorphism and $*=1$ then the point axiom is trivially satisfied by Lemma \ref{purity}. Let $\Hom_{\rm point} (\cC^{\square}, - )$ be the functor of relative homologies satisfying the point axiom. 
\begin{propose}\label{relpoint}
The functor $\Hom_{\rm point} (\cC^{\square}, - )$ is representable, \ie there is a universal relative homology satisfying the point axiom $H = \{ H_i\}_{i\in \Z} :\cC^{\square}\to \cA_\partial^{\rm point}(\cC)$
with values in $ \cA_\partial^{\rm point}(\cC)$ a quotient of $ \cA_\partial(\cC)$.
\end{propose}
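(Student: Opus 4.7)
My plan is to mimic the quotient construction used in Proposition \ref{point}, now applied to the universal relative category $\cA_\partial(\cC)$ of Theorem \ref{mixed}. Let $H = \{H_i\}_{i\in\Z}:\cC^\square\to \cA_\partial(\cC)$ be the universal relative homology. Let $\cN\subset\cA_\partial(\cC)$ be the thick subcategory generated by the set
$$\{\, H_i(*,0) \;:\; i\neq 0,\ * \text{ a point object of } \cC\,\},$$
and define
$$\cA_\partial^{\rm point}(\cC) \df \cA_\partial(\cC)/\cN,$$
with $\pi:\cA_\partial(\cC)\onto \cA_\partial^{\rm point}(\cC)$ the exact quotient functor. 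Set $H^{\rm point}\df \pi\circ H\in \Hom(\cC^\square,\cA_\partial^{\rm point}(\cC))$. By construction $H^{\rm point}_i(*,0) = \pi(H_i(*,0)) = 0$ for $i\neq 0$, so $H^{\rm point}$ satisfies the point axiom.

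For universality, take any $K\in\Hom_{\rm point}(\cC^\square,\cA)$; by Theorem \ref{mixed} there is a unique (up to equivalence) exact functor $r_K:\cA_\partial(\cC)\to \cA$ with $r_K H = K$. Because $K$ satisfies the point axiom, $r_K(H_i(*,0)) = K_i(*,0) = 0$ for $i\neq 0$ and all point objects $*$; since $r_K$ is exact, $\ker r_K$ is a thick subcategory of $\cA_\partial(\cC)$ containing all the listed generators, hence $\cN\subseteq \ker r_K$. Therefore $r_K$ factors uniquely through $\pi$ as an exact functor $\bar r_K:\cA_\partial^{\rm point}(\cC)\to \cA$ with $\bar r_K H^{\rm point} = K$. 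Conversely, every exact $F:\cA_\partial^{\rm point}(\cC)\to \cA$ gives $FH^{\rm point}\in\Hom_{\rm point}(\cC^\square,\cA)$ (the point axiom being stable under composition with exact functors), and the two constructions are mutually inverse on morphisms by the uniqueness part of Theorem \ref{mixed} applied in $\cA_\partial(\cC)$. This yields the natural equivalence
$$\Hom_{\rm point}(\cC^\square,\cA) \cong \Ex_R(\cA_\partial^{\rm point}(\cC),\cA),$$
so that $H^{\rm point}$ is universal.

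The only point that requires a moment of care is checking that the subcategory $\Hom_{\rm point}(\cC^\square,\cA) \subseteq \Hom(\cC^\square,\cA)$ is precisely the essential image of $\Ex_R(\cA_\partial^{\rm point}(\cC),\cA)$ under the embedding induced by $\pi$, i.e.\ that an exact functor $r:\cA_\partial(\cC)\to\cA$ factors through $\pi$ exactly when $rH$ satisfies the point axiom; this is immediate from the definition of $\cN$ as the \emph{smallest} thick subcategory containing the generators, combined with the fact that exact functors send thick subcategories of the source into the kernel as soon as the listed generators are killed. Naturality in $\cA$ is inherited from Theorem \ref{mixed}, since $\pi$ is fixed once and for all. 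I do not foresee a genuine obstacle here, the statement being a formal quotient argument parallel to Proposition \ref{point}.
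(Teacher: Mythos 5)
Your construction coincides with the paper's: the paper defines $\cA_\partial^{\rm point}(\cC)$ as exactly the quotient of $\cA_\partial(\cC)$ by the thick subcategory generated by the $H_i(*,0)$ for $i\neq 0$, and appeals to the same quotient-universality argument (spelled out in Proposition \ref{point} and Theorem \ref{mixed}) that you make explicit. Your proposal is correct and follows essentially the same route, just with the factorization details written out.
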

\begin{proof}
The category $ \cA_\partial^{\rm point}(\cC)$  is the quotient of  $ \cA_\partial(\cC)$ by the thick subcategory $\ll H_i(*, 0)\gg$ for $i\neq 0$ and any point object $*$ where $H$ is the universal relative homology. 
\end{proof}
We clearly get the induced exact functor 
$$r_\partial^{\rm point} : \cA^{\rm point}(\cC)\to \cA_\partial^{\rm point}(\cC)$$
\begin{example}[Universal relative (co)homology on the two objects category] \label{pointexrel} 
Let $\cC= {\bf 2}$ be the category with two objects determined by the ordinal $2\in \N$ such that $1$ is a point object. The  diagram underlying to this category is given by two vertices $0\df \emptyset$ and  $1\df \{\emptyset\}$, one arrow $0\to 1$ and the two identities $\circlearrowleft$ attached to  $0$ and $1$. If we choose ${\bf 2}$ as a distinguished subcategory we have that ${\bf 2}^\square$ has three objects $(1, 0)$,  $(0, 0)$ and $ (1,1)$ (which are the arrows of ${\bf 2}$); the morphisms are  $(0, 0)\to (1, 0)$, $(1, 0)\to (1, 1)$  and $(0, 0)\to (1, 1)$ together with the three identities. Since in ${\bf 2}$ there are two objects we have the following  triples which are not given by identities only: 
$(0, 0)\to (1, 0)=(1, 0)$ and $(1, 0)=(1, 0)\to (1, 1 )$. For $H\in \Hom ({\bf 2}^\square, \cA)$ we have that $H_i(1, 0)\in \cA$ is the only possibly non-zero value, because of Lemma \ref{purity}, and the exact sequences of the triples are
$$H_i(0, 0)\to H_i(1, 0)\by{=} H_i(1, 0)\hspace*{0.5cm}\text{and}\hspace*{0.5cm}H_i(1, 0)\by{=}H_i(1, 0)\to H_i(1, 1 )$$
with $\partial_i=0$. If  $H\in \Hom_{\rm point} ({\bf 2}^\square, \cA)$ we then have that $H_0(1, 0)\in \cA$ is the only possibly non-zero object in such a way that 
$$\Hom_{\rm point} ({\bf 2}^\square, \cA)\cong \Hom_{\rm point} ({\bf 1}, \cA)\cong \cA$$
induced by ${\bf 1}\to {\bf 2}^\square$ sending $*\leadsto (1,0 )$.
Therefore $\Ab_R\cong \iRmod\cong \cA_\partial^{\rm point}({\bf 2})\cong \cA^{\rm point}({\bf 1})$ by Example \ref{pointex}.
Note that the functor ${\bf 2}\to {\bf 2}^\square$ sending $1\leadsto (1,0)$ and $0\leadsto (0, 0 )$  yields 
$$\cA \cong \Hom_{\rm point} ({\bf 2}^\square, \cA)\subsetneq \Hom_{\rm point} ({\bf 2}, \cA)$$
and $r_\partial^{\rm point} :\cA^{\rm point}({\bf 2})\onto \iRmod$. Note that  $\cA \subset \Hom_{\rm point} ({\bf 2}, \cA)$ is determined by those homologies $H$ such that $H_i(0)=0$ for $i\in \Z$, \cf additive homologies defined below.
\end{example}
\begin{example}[Almost trivial homology] \label{almtriv} 
For $\cC$ a category with a strictly initial object $0$ we say that $X = 0$ if there is a morphism $X\to 0$ (note that if such a morphism exists is an isomorphism).  Assume that there is at least one distinguished object $*\neq 0$ as in the previous example and  let ${\bf 2}^\square\to \cC^\square$ be the unique functor induced by $0 \leadsto 0$ and $1\leadsto *$. Let $\cA$ be a non trivial abelian category and pick $A\in \cA$, $A\neq 0$.  The ``almost trivial'' homology $H^!:\cC^{\square}\to\cA$ is given by $H^!_i(X,Y)\df 0$ for $(X,Y)\in \cC^{\square}$, either $Y\neq 0$ and $i\in \Z$ or $Y=0$ and $i\neq 0$; moreover, $H^!_0(X)\df A\in \cA$ for all $X\neq 0$, $H^!_0(0)\df 0$, and $H^!$ is sending morphisms of $\cC^\square$ to identities or zero morphism, \eg for $f: X\to X'$ with $X, X'\neq 0$ $H^!_0(f)$ is the identity of $A$. It is easy to check that this is a relative homology satisfying the point axiom for $\cC$ with $*$ as a point object: by the universal property we then get an exact functor $r_{H^!}:  \cA_\partial^{\rm point}(\cC)\to  \cA$ which is not trivial. Let $\Hom_!(\cC^\square, \cA)\subseteq \Hom (\cC^\square, \cA)$ be the subcategory of almost trivial homologies. We then get $\Hom_{\rm point} ({\bf 2}^\square, \cA)\cong \Hom_!(\cC^\square, \cA)$ by restriction. 
\end{example}

The following is the relative analogue of Proposition \ref{kproj}
 \begin{propose} \label{relkproj}
 Let $\cC$ be a category with a strictly initial object $0$ and a final object $1$. Then for each integer $k\in\Z$ there is a quotient
$$\pi_k:\cA_\partial(\cC)\onto \iRmod$$
 with a section $\iota_k :  \iRmod\to \cA_\partial (\cC)$ (see Definition \ref{unrepdef}, \cf Examples \ref{pointex}-\ref{pointexrel}). For $k=0$ the functor $\pi_0$ factors through a further quotient  $$!^{\rm point}:\cA_\partial^{\rm point}(\cC)\onto \iRmod$$
with a section $!_{\rm point} :  \iRmod\to \cA_\partial^{\rm point}(\cC)$. 
 \end{propose}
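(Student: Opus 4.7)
The plan is to transcribe the proof of Proposition \ref{kproj} to the relative setting, replacing each $X\in\cC$ by the pair $(X,0)\in\cC^\square$ and $\cA(\cC)$ by $\cA_\partial(\cC)$. For each $X$ the unique morphism $\int : X\to 1$ to the final object induces a morphism $(X,0)\to(1,0)$ in $\cC^\square$, and applying the universal relative homology $H:\cC^\square\to\cA_\partial(\cC)$ of Theorem \ref{mixed} produces maps $\int_i : H_i(X,0)\to H_i(1,0)$ in $\cA_\partial(\cC)$. Fix $k\in\Z$ and form the thick subcategory $N_k\subset\cA_\partial(\cC)$ generated by $H_i(X,0)$ for $i\neq k$ together with $\ker\int_k$ and $\coker\int_k$ as $X$ varies over $\cC$; set $\cA_\partial(\cC)_k\df\cA_\partial(\cC)/N_k$.

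The next step is to identify $\cA_\partial(\cC)_k$ with the abelian category representing the sub-2-functor $\Hom_k(\cC^\square,-)\subset\Hom(\cC^\square,-)$ of relative homologies $H'$ such that $H'_i(X,0)=0$ for $i\neq k$ and $\int_k : H'_k(X,0)\iso H'_k(1,0)$ for every $X\in\cC$. The key claim is that $\Hom_k(\cC^\square,\cA)\cong\cA$ via $H'\leadsto H'_k(1,0)$. Given $A\in\cA$ one recovers such an $H'$ by setting $H'_k(X,0)\df A$ with functorial maps all equal to $id_A$, $H'_i(X,0)\df 0$ for $i\neq k$, and extending to $H'_i(X,Y)$ for general $(X,Y)\in\cC^\square$ through the long exact sequence of the pair $(Y,0)\to(X,0)$. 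Because $H'_i(0,0)=0$ by Lemma \ref{purity} and $H'_i(X,0),H'_i(Y,0)$ vanish for $i\neq k$, the long exact sequence forces $H'_i(X,Y)=0$ for $i\notin\{k,k+1\}$; in degrees $k$ and $k+1$, functoriality of $\int$ (namely $\int^Y=\int^X\circ f$ for $f:Y\to X$) forces the map $A\cong H'_k(Y,0)\to H'_k(X,0)\cong A$ to be $id_A$, so $H'_{k+1}(X,Y)$ and $H'_k(X,Y)$ are the kernel and cokernel of an identity and thus vanish. Hence $H'$ is determined by $A$, and since $\iRmod$ represents the constant-object 2-functor $\cA\leadsto\cA$ (Example \ref{pointex}, Definition \ref{unrepdef}) we obtain $\cA_\partial(\cC)_k\cong\iRmod$ and the quotient $\pi_k$.

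For the section, the object $H_k(1,0)\in\cA_\partial(\cC)$ induces by the universal property of $\iRmod$ an $R$-linear exact functor $\iota_k:\iRmod\to\cA_\partial(\cC)$ sending $|R|\mapsto H_k(1,0)$. The composition $\pi_k\iota_k:\iRmod\to\cA_\partial(\cC)_k\cong\iRmod$ sends $|R|$ to its image in the quotient, which is the universal hieratic module of $\cA_\partial(\cC)_k$; by uniqueness of the exact functor determined by a single object, $\pi_k\iota_k=id$. For $k=0$ we run the same construction inside $\cA_\partial^{\rm point}(\cC)$ (Proposition \ref{relpoint}): when $1$ is taken as a point object the point axiom already kills $H_i(1,0)$ for $i\neq 0$, hence $N_0$ contains the kernel of the projection $p:\cA_\partial(\cC)\onto\cA_\partial^{\rm point}(\cC)$ and $\pi_0$ factors as $!^{\rm point}\circ p$, with section $!_{\rm point}$ built identically from the class of $H_0(1,0)$ in $\cA_\partial^{\rm point}(\cC)$.

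The main obstacle is the long-exact-sequence computation in the second paragraph: one must verify that the apparently weak constraints on $H'_*(X,0)$ propagate through the sequence of a pair so as to pin down every $H'_i(X,Y)$ and reduce the sub-2-functor $\Hom_k(\cC^\square,-)$ to the constant-object 2-functor. Once this reduction is secured, everything else is a direct transcription of the argument of Proposition \ref{kproj}.
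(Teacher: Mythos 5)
Your overall strategy --- transcribing Proposition \ref{kproj} via the restricted homology $H_i(X)=H_i(X,0)$ --- is the paper's, but there is a genuine gap: you never use the \emph{strict} initiality of $0$ and you let $X$ range over all of $\cC$, including $X=0$, and this breaks the argument at its key point. By Lemma \ref{purity} every relative homology satisfies $H'_i(0,0)=0$, since $(0,0)$ comes from a distinguished isomorphism. Hence your generating set for $N_k$ contains $\coker\bigl(\int_k: H_k(0,0)\to H_k(1,0)\bigr)=H_k(1,0)$, so $H_k(1,0)\in N_k$; together with the kernels and cokernels of the other $\int_k$ this puts every $H_k(X,0)$ (hence every $H_i(X,0)$) into $N_k$, and by Theorem \ref{genmixed} the quotient $\cA_\partial(\cC)_k$ is the zero category, not $\iRmod$. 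For the same reason your key claim fails as stated: requiring $\int_k:H'_k(X,0)\iso H'_k(1,0)$ for \emph{every} $X$, in particular $X=0$, forces $H'_k(1,0)=0$ and hence $H'=0$, so your $\Hom_k(\cC^\square,\cA)$ is trivial rather than equivalent to $\cA$; and the object $H'$ you build from $A$ with $H'_k(X,0)\df A$ for all $X$ is not a relative homology when $A\neq 0$, because exactness for the triple at $(0,0)$ forces $H'_k(0,0)=0$.

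The repair is exactly where the hypothesis you left unused enters. Strict initiality makes ``$X=0$'' (existence of a morphism $X\to 0$) a workable dichotomy, and the paper takes the generators $H_i(X)$ ($i\neq k$), $\ker\int_k$, $\coker\int_k$ only for $X\neq 0$; correspondingly $\Hom_k(\cC^\square,-)$ consists of the trivial homology together with those $H'$ for which $H'_i(X,Y)\neq 0$ implies $i=k$, $Y=0$, $X\neq 0$, with $\int_k$ an isomorphism for $X\neq 0$. The representative attached to $A\in\cA$ is then not your constant assignment on all pairs $(X,0)$ but the degree-$k$ shift of the ``almost trivial'' homology of Example \ref{almtriv}, which has value $0$ at $X=0$ (strictness also guarantees there are no morphisms from $X\neq 0$ to $0$, so sending all maps to $id_A$ or $0$ is functorial). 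With these corrections your long-exact-sequence computation pinning down $H'_i(X,Y)$ for $Y\neq 0$, the construction of the section $\iota_k$ from $H_k(1,0)$, and the factorisation of $\pi_0$ through $\cA_\partial^{\rm point}(\cC)$ go through essentially as in the paper.
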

 \begin{proof} For each  $X\in \cC$, $X\neq 0$, let $\int : X\to 1$ and get $\int_i : H_i(X)\to H_i(1)$ in $\cA_\partial (\cC)$ where $H$ is the universal relative homology.
For each fixed integer $k$ consider the thick subcategory of $\cA_\partial (\cC)$ generated by $H_i(X)$, $i\neq k$, $\ker \int_k$ and $\coker \int_k$ for $X\neq 0$.  Denote $\cA_\partial (\cC)_k$ the resulting quotient of  $\cA_\partial (\cC)$. Note that, if $Y\neq 0$,  also $H_i(X, Y)$ is vanishing in $\cA_\partial (\cC)_k$ for all $i\in \Z$  by the long exact sequence of the pair; for $i=k$ we have 
$$\xymatrix{
0 \ar[r]^-{}& H_{k+1}(X,Y)\ar[d]^-{\int_{k-1}}\ar[r]^-{\partial_{k+1}} & H_k(Y)\ar[r]^-{\alpha_k}  \ar[d]^-{\int_k} & H_k(X) \ar[d]^-{\int_k}\ar[r]^-{\beta_k} & H_k(X,Y) \ar[r]^-{}  \ar[d]^-{\int_k} &  0 & \\
0 \ar[r]^-{}& H_{k+1}(1,1) \ar[r]^-{\rm zero} & H_k(1)\ar[r]^-{=}  & H_k(1) \ar[r]^-{\rm zero} & H_k(1,1) \ar[r]^-{}  &  0}$$
where the central square is a square of iso in $\cA_\partial (\cC)_k$ and therefore also $H_{k+1}(X,Y)=H_{k}(X,Y)=0$ in 
 $\cA_\partial (\cC)_k$. If $\Hom_k (\cC^\square, - )\subset \Hom (\cC^\square, - )$ is the subfunctor determined by the trivial homology and those non trivial relative homologies $H = \{H_i\}$ such that $H_i (X, Y)\neq 0$ implies $i= k$, $Y=0$, $X\neq 0$ and  $\int_k: H_k (X)\cong H_k(1)\neq 0$, then $\Hom_k (\cC^\square, - )$  is represented by the category  $\cA_\partial(\cC)_k$. Clearly $\cA \cong \Hom_k (\cC^\square, \cA )$ by sending $A  \leadsto H_k(1)\df A$ and $\iRmod\cong \cA_\partial (\cC)_k$. We then get a quotient $\pi_k:\cA_\partial (\cC)\onto \iRmod$ for each integer $k\in \Z$. Now $H_k(1)\in\cA_\partial (\cC)$ yields an exact functor $\iota_k : \iRmod\cong\cA_\partial (\cC)_k\to \cA(\cC)$ whose composition with the projection $\pi_k : \cA(\cC)\to \cA_\partial (\cC)_k\cong \iRmod$ is the identity.
 
For $k=0$, assuming the point axiom we have $H_i(*)=0$ for $i\neq 0$, and we can argue with $\int_0 : H_0(X)\to H_0(1)$ and $H_i(X)$ for $X\in \cC$, $i\neq 0$, in $\cA_\partial^{\rm point}(\cC)$: the last claims are clear.
\end{proof}
\begin{remark}
For $\cC$ a category with a strictly initial object $0$ and a final point object $1$, consider the almost trivial homology $H^!:\cC^{\square}\to\cA_\partial^{\rm point}(\cC)$ of Example \ref{almtriv} given by $H^!_0(X)\df H_0(1)\in \cA_\partial^{\rm point}(\cC)$ for $X\neq 0$. We then get an exact functor $r_{H^!}:  \cA_\partial^{\rm point}(\cC)\to  \cA_\partial^{\rm point}(\cC)$ such that $\cA (H^!) \df \cA_\partial^{\rm point}(\cC)/\ker r_{H^!}$ is equivalent to $\Ab_R$ and $r_{H^!}= !_{\rm point}!^{\rm point}$. 
\end{remark}
\begin{axiom}[Additivity] \label{addax}
Let $H$ be a homology or a relative homology.  Say that  $H$ is \emph{finitely additive} if $$H_i(X)\oplus H_i(X')\by{\simeq} H_i(X\coprod X')$$ for $X, X'\in \cC$ whenever the coproduct $X\coprod X'$ exists in $\cC$, \eg we may assume that the category has finite coproducts. If  $\cC$ has (small) coproducts say that  $H$ with values in a Grothendieck category $\cA$ is \emph{additive} if it is finitely additive and moreover we have an isomorphism 
$$\bigoplus_k{} H_i (X_k)\longby{\simeq}  H_i (\coprod_k X_k)$$ induced by the canonical morphisms $X_k \to\coprod X_k$ of  $\cC$ for an infinite (small) coproduct.  
\end{axiom} 
Denote $\Hom_{\rm add} (\cC^{\square}, \cA )$ the category of relative finitely  additive homologies in $\cA$ and $\Hom_{\rm Add} (\cC^{\square}, \cA )$ the category of additive homologies in a Grothendieck category $\cA$. 
\begin{propose}\label{univadd}
If $\cC$ is a category with finite coproducts then $$\Hom_{\rm add} (\cC^{\square}, - ): \Ex_R\to \Cat$$ is representable by $\cA_\partial^{\rm add}(\cC)$ which is a quotient of $ \cA_\partial(\cC)$. 
If $\cC$ is a category with (small) coproducts then there is a universal additive relative homology 
$H = \{H_i\}_{i\in \Z} :\cC^{\square}\to  \cA_\partial^{\rm Add}(\cC)$
with values in a Grothendieck category $\cA_\partial^{\rm Add}(\cC)$ which is a quotient of $\Ind \cA_\partial(\cC)$ and also a quotient of $\Ind \cA_\partial^{\rm add}(\cC)$.
\end{propose}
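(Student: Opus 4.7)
The plan is to mimic the construction of $\cA_\partial^{\rm point}(\cC)$ in Proposition \ref{relpoint} by taking a further quotient of $\cA_\partial(\cC)$ that imposes finite additivity, and then to pass to the indization in order to absorb arbitrary (small) sums for the Grothendieck version.

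For the first claim, I would begin from the universal relative homology $H\in \Hom(\cC^\square, \cA_\partial(\cC))$ of Theorem \ref{mixed}. For each pair $X, X'\in \cC$ whose coproduct exists (and similarly for pairs of objects of $\cC^\square$) and each $i\in \Z$, the coproduct injections induce a canonical morphism $H_i(X)\oplus H_i(X')\to H_i(X\coprod X')$ in $\cA_\partial(\cC)$. Define $\cA_\partial^{\rm add}(\cC)$ as the quotient of $\cA_\partial(\cC)$ by the thick subcategory generated by the kernels and cokernels of all these canonical morphisms. Then the image of $H$ under the projection is a finitely additive relative homology by construction. For the universal property, given $H'\in \Hom_{\rm add}(\cC^\square, \cA)$ the exact functor $r_{H'}:\cA_\partial(\cC)\to \cA$ supplied by Theorem \ref{mixed} annihilates the prescribed kernels and cokernels (since $H'$ is additive), so it factors uniquely through $\cA_\partial^{\rm add}(\cC)$; this yields the natural equivalence $\Hom_{\rm add}(\cC^\square, \cA)\cong \Ex_R(\cA_\partial^{\rm add}(\cC), \cA)$.

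For the second claim, I would work inside $\Ind \cA_\partial(\cC)$, which by Corollary \ref{induni} represents relative homologies in Grothendieck categories. Small direct sums exist in $\Ind \cA_\partial(\cC)$, so for every small family $\{X_k\}$ in $\cC$ admitting a coproduct and every $i\in\Z$ one has a canonical morphism $\bigoplus_k H_i(X_k)\to H_i(\coprod_k X_k)$ (with the analogous morphism on pairs). Let $\cT\subset \Ind \cA_\partial(\cC)$ be the smallest localizing subcategory (thick and closed under arbitrary small sums) containing the kernels and cokernels of all these morphisms, together with those of the finite additivity morphisms. Set
$$\cA_\partial^{\rm Add}(\cC)\df \Ind \cA_\partial(\cC)/\cT,$$
which is a Grothendieck category by standard Gabriel localization, and take the universal additive relative homology to be the image of $H$ under the composite $\cA_\partial(\cC)\into \Ind \cA_\partial(\cC)\onto \cA_\partial^{\rm Add}(\cC)$. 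For the universal property, an additive $H'\in \Hom_{\rm Add}(\cC^\square, \cA)$ with $\cA$ Grothendieck corresponds by Corollary \ref{induni} and Proposition \ref{indadjoint} to an exact, colimit-preserving functor $\Ind \cA_\partial(\cC)\to \cA$, and additivity forces it to kill $\cT$, hence it descends uniquely to $\cA_\partial^{\rm Add}(\cC)$.

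The closing assertion that $\cA_\partial^{\rm Add}(\cC)$ is also a quotient of $\Ind \cA_\partial^{\rm add}(\cC)$ follows by applying $\Ind$ to the exact quotient $\cA_\partial(\cC)\onto \cA_\partial^{\rm add}(\cC)$: the finite additivity relations already vanish in $\Ind \cA_\partial^{\rm add}(\cC)$, so only the genuinely infinite sum relations need to be imposed as a localizing subcategory, and the resulting Gabriel quotient is canonically equivalent to $\cA_\partial^{\rm Add}(\cC)$. The hard part I anticipate is purely set-theoretic bookkeeping: verifying that the collection of canonical morphisms is indexed by a set up to isomorphism, so that the generated localizing subcategory is genuinely well-defined and the Gabriel quotient is a legitimate Grothendieck category; and checking that localization commutes with $\Ind$ in the precise way required to identify the two descriptions of $\cA_\partial^{\rm Add}(\cC)$. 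Beyond this, the argument is a direct parallel of Propositions \ref{point} and \ref{relpoint}.
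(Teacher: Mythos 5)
Your proposal is correct and follows essentially the same route as the paper: the same quotient of $\cA_\partial(\cC)$ by the thick subcategory generated by kernels and cokernels of the finite-sum comparison maps, and the same quotient of $\Ind \cA_\partial(\cC)$ by the thick localizing subcategory generated by the small-sum comparison maps (taken in the sense of sums in the Ind-category), with the universal property verified exactly as you do via Corollary \ref{induni} and the fact that the induced exact functor preserves the relevant colimits, and the factorisation through $\Ind \cA_\partial^{\rm add}(\cC)$ obtained the same way. The minor deviations (also imposing the relations on pairs of $\cC^\square$, which is redundant given additivity on objects and the long exact sequences, and the set-theoretic worry, which is moot since $\cC$ and hence $\cA_\partial(\cC)$ are essentially small) do not affect the argument.
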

\begin{proof} Let $H :\cC^{\square}\to\cA_\partial (\cC)$ be the universal homology. 
Let $ \cA_\partial^{\rm add}(\cC)$  be the quotient of  $ \cA_\partial(\cC)$ by the thick subcategory $\cS$ generated by the kernels and cokernels of $ \bigoplus_kH_i (X_k)\to H_i (\coprod_k X_k)$ for finite coproducts: the image of $H$ under the projection $ \cA_\partial(\cC)\onto \cA_\partial^{\rm add}(\cC)$ yields the universal finitely additive relative homology. 

Using Corollary \ref{induni} we can make the same construction in $\Ind \cA_\partial(\cC)$ considering, for all $i\in \Z$,
kernels and cokernels of $$``\bigoplus_{k\in I}{ } " H_i (X_k)\to H_i (\coprod_{k\in I} X_k)$$ 
where the ``sum'' over a small family $\{X_k\}_{k\in I}$ of objects is defined in \cite[Notation 8.6.1]{KS}: we thus obtain $\cA_\partial^{\rm Add}(\cC)$ as a quotient of $\Ind \cA_\partial(\cC)$ by $\cS$ the thick and localizing subcategory generated by the mentioned kernels and cokernels in such a way that $\Ind \cA_\partial(\cC)\onto \cA_\partial^{\rm Add}(\cC)$ is an $R$-linear exact functor commuting with (filtered) colimits which factors through $\Ind \cA_\partial^{\rm add}(\cC)$. For $K\in\Hom_{\rm Add} (\cC^{\square}, \cA)$ with $\cA$ Grothendieck we obtain $r_K : \Ind \cA_\partial(\cC)\to \cA$ such that $r_K(H)=K$, as in Corollary \ref{induni}, and
 $$\bigoplus_{k}{ } K_i (X_k)=r_K (``\bigoplus_{k}{ } " H_i (X_k)) \by{\simeq} r_K(H_i (\coprod_k X_k))= K_i (\coprod_k X_k)$$ 
so that $r_K $ factors uniquely through the Grothendieck category $\cA_\partial^{\rm Add}(\cC)$ (\cf\cite[Prop.\,1.6]{GG}).
\end{proof}
We also get the refined exact functor 
$$r_\partial^{\rm add} : \cA^{\rm add}(\cC)\to \cA_\partial^{\rm add}(\cC)$$
where  $\cA^{\rm add}(\cC)$ is a quotient of $\cA(\cC)$ by the analogue thick subcategory indicated in the proof of Proposition \ref{univadd}.
 
\section{Universal Grothendieck homological functors} Let $\cC = \cA$ be an abelian category along with the distinguished subcategory given by the objects of $\cA$ but where a morphism is a mono of $\cA$. The category of pairs $\cC^{\square}=\cA^{\square}$ has objects $(A, B)$ where $B \into A$ is a mono of $\cA$ and we clearly have a functor 
$$ (A, B)\leadsto A/B: \cA^{\square}\to \cA$$
which we donote by $q:  \cA^{\square}\to \cA$. Recall that a \emph{Grothendieck homological functor} from $\cA$ to an abelian category $\cB$ is a $\Z$-indexed exact covariant $\partial^*$-functor, see \cite[\S 2.1]{Gr}: this is a family of additive functors that we denote $$T =\{T_i \}_{i\in \Z}: \cA\to \cB$$ together with the boundaries $\partial_i$ decreasing the index, as it is well known; recall that Grothendieck also considers $[a, b]$ an interval in $\Z$ as a set of indexes including $[-\infty , +\infty] =\Z$. Denote by $\Hom_{\partial}(\cA, \cB)$ the category of such a functors plus $\partial$-naturality: we obtain 
$$\Hom_{\partial}( \cA, - ) : \Ex \to \Cat$$
the 2-functor of Grothendieck homological functors.
\subsection{$\partial$-homology}
For $\{T_i \}_{i\in \Z}\in \Hom_{\partial}( \cA, \cB)$, by restriction along $q : \cA^{\square}\to \cA$, set $$H_i^T(A, B)\df T_i(A/B)$$ and get a corresponding family $\{H_i^T\}_{i\in \Z}$ of functors on $\cA^{\square}$. Given a triple $(B, C)\into (A, B)$ in $\cA^{\square}$ we get the following 
$$0\to B/C \to A/C \to A/B\to 0$$
short exact sequence in $\cA$ 
 and for a $\partial$-cube in $\cA^{\square}$ we get a morphism of the corresponding short exact sequences in $\cA$; actually, the functor $q$ lifts to a functor from the category $\cA^{\blacksquare}$ of $\partial$-cubes, see Remark \ref{partialrmk} b), to that of short exact sequences in $\cA$. Therefore, we get the long exact sequence 
$$\cdots \to T_i(B/C)\to T_i(A/C)\to T_i(A/B)\longby{\partial_i} T_{i-1}(B/C)\to \cdots$$
along with the naturality with respect to $\partial$-cubes. Now $H^T\df \{ H^T_i\}_{i\in \Z}\in \Hom_{\rm add} (\cA^{\square},\cB)$ is finitely additive in the sense of Axiom \ref{addax}:  note that $H_i^T(-) = T_i(-):\cA \to \cB$  is an additive functor between abelian categories, \ie one inducing homomorphisms of the ${\rm Hom}$-groups, and this is equivalent to say that  $H_i^T(-) $ commutes with finite sums. Clearly $T\leadsto H^T$ is functorial as a morphism $\varphi : T\to T'$ is assumed to be compatible with the boundaries. 
\begin{defn}\label{parthom}
 A finitely additive relative homology $H\in \Hom_{\rm add} (\cA^{\square},\cB)$ is a \emph{$\partial$-homology} if $H_i(A, B)\by{\simeq} H_i(A/B)$ for all $(A, B)\in \cA^{\square}$.
 \end{defn}
 Note that $H^T$ is a $\partial$-homology. We have:
\begin{lemma} \label{partial}
Let  $\cA$ be an abelian category and let  $\cA^{\square}$ be the category of monos. The functor 
$$T\leadsto H^T : \Hom_{\partial}( \cA, \cB)\into \Hom_{\rm add}(\cA^{\square}, \cB)$$
 is fully faithful with essential image the subcategory of  $\partial$-homologies.
 \end{lemma}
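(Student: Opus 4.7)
The plan is to exhibit an ``initial'' embedding $\iota : \cA \to \cA^{\square}$ sending $A \leadsto (A, 0)$ and to use it as the inverse construction. By the definition of morphisms in $\cA^{\square}$, a map $(A,0) \to (A',0)$ is a commutative square whose two distinguished monos are $0 \into A$ and $0 \into A'$, so it is freely specified by any morphism $A \to A'$ of $\cA$; thus $\iota$ is fully faithful, $q\iota = \mathrm{id}_{\cA}$, and therefore $H^T \iota = T$ for every $\partial$-functor $T$. The key geometric observation is that triples in $\cA^{\square}$ of the form $(X,Y), (X,0), (Y,0)$ map under $q$ to precisely the short exact sequences $0 \to Y \to X \to X/Y \to 0$ of $\cA$, and $\partial$-cubes with $Z = Z' = 0$ map to morphisms of such short exact sequences.

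Faithfulness of $T \leadsto H^T$ is then immediate from $H^T \iota = T$: if $\varphi, \varphi' : T \to T'$ yield $H^\varphi = H^{\varphi'}$ then restricting along $\iota$ recovers $\varphi = \varphi'$. For fullness, given $\psi : H^T \to H^{T'}$, I would set $\varphi_i(A) \df \psi_i(A, 0)$; naturality of $\varphi_i : T_i \to T'_i$ in $A \in \cA$ follows from naturality of $\psi$ along the image of $\iota$. To verify compatibility with the Grothendieck boundaries, unfold a short exact sequence $0 \to A \to B \to C \to 0$ in $\cA$ as the triple $(B,A), (B,0), (A,0)$ in $\cA^{\square}$: the boundary $\partial^{H^T}_i : H^T_i(B,A) = T_i(C) \to H^T_{i-1}(A, 0) = T_{i-1}(A)$ is by construction the $\partial$-functor boundary $\partial^T_i$, and so compatibility of $\psi$ with the relative-homology boundaries is precisely compatibility of $\varphi$ with the $\partial$-functor boundaries.

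For the essential image, every $H^T$ is tautologically a $\partial$-homology: the canonical morphism $(A,B) \to (A/B, 0)$ in $\cA^{\square}$ with components $A \onto A/B$ and $B \to 0$ lifts the identity of $A/B$ under $q$, hence induces an isomorphism $H^T_i(A,B) = T_i(A/B) \by{\simeq} H^T_i(A/B, 0)$. Conversely, given a $\partial$-homology $H \in \Hom_{\rm add}(\cA^{\square}, \cB)$, define $T_i \df H_i \iota$ (additive, by finite additivity of $H$) and transport the triple-boundary $\partial^H_i : H_i(B, A) \to H_{i-1}(A, 0) = T_{i-1}(A)$ along the $\partial$-homology isomorphism $H_i(B, A) \by{\simeq} H_i(B/A, 0) = T_i(C)$ to obtain the $\partial$-functor boundary of the SES $0 \to A \to B \to C \to 0$. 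The long exact sequence of $T$ is the image under this identification of the LES of $H$ on the triple $(B, A), (B, 0), (A, 0)$, and its naturality for morphisms of SES's in $\cA$ comes from naturality of the LES of $H$ along $\partial$-cubes with $Z = Z' = 0$; finally $H^T \cong H$ via the $\partial$-homology iso. The only point requiring any care — not really an obstacle — is that this $\partial$-homology iso is induced by functoriality of $H$ along the canonical morphism $(A, B) \to (A/B, 0)$, so its naturality and compatibility with boundaries are automatic.
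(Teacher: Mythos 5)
Your argument is correct and follows essentially the same route as the paper: restrict along $A\leadsto (A,0)$ to recover $T$ from $H^T$, and for a $\partial$-homology $H$ read off the Grothendieck boundaries and long exact sequences from the LES of the pairs $(B,A)$ via the isomorphism $H_i(B,A)\cong H_i(B/A)$, with naturality coming from $\partial$-cubes with $Z=Z'=0$; you in fact supply more detail than the paper, which declares fully faithfulness ``clear.'' The only step left tacit in your fullness argument is that the restricted $\varphi$ satisfies $H^\varphi=\psi$, which follows at once from naturality of $\psi$ along the canonical morphism $(A,B)\to (A/B,0)$, exactly the observation you already make for the essential image.
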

\begin{proof} Since $T$  is the restriction of $H^T$ along $\cA\into \cA^{\square}$ the fully faithfulness is clear.  For a  $\partial$-homology $H= \{H_i \}_{i\in \Z}\in \Hom_{\rm add}(\cA^{\square}, \cB)$  define $T^H =\{T_i ^H\}_{i\in \Z }$ by considering the restricted homology  $A\leadsto T_{i}^H(A)\df H_{i} (A)$: this yields additive functors as $H$ is additive. If  $0\to B\to A\to C\to 0$ is exact in $\cA$, $C= A/B$ and $H_i (A, B)\by{\simeq} H_i(A/B)$, then 
$$\cdots\to T_{i}^H(B)\to T_{i}^H(A)\to T_i^H(A/B)\longby{\partial_i}T_{i-1}^H(B)\to \cdots$$ is given by the long exact sequence of the pair $(A,B)\in \cA^\square$ for the relative homology $H$. Naturality of  $T^H$ is also given by naturality of $H$.  We thus obtain $T'\df T^H\in \Hom_{\partial}( \cA, \cB)$ such that $H^{T'}=H$.  \end{proof}
\begin{thm}[Universal $\partial$-homology]\label{unipart}
For $\cA$ an abelian category the universal $\partial$-homology exsits. Therefore, the universal Grothendieck homology $T^\partial :  \cA\to \cA^\partial$ exists, \ie the functor $\Hom_{\partial } (\cA, - ): \Ex \to \Cat$  is representable by $( \cA^\partial, T^\partial)$.
\end{thm}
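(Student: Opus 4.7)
The plan is to construct $\cA^\partial$ as an explicit quotient of the universal finitely additive relative homology category $\cA_\partial^{\rm add}(\cA)$ from Proposition \ref{univadd}, by forcing the comparison map $H_i(A,B)\to H_i(A/B)$ to be an isomorphism; the identification with Grothendieck homological functors then follows from Lemma \ref{partial}.

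More precisely, for each $(A,B)\in\cA^\square$ the pair $(q,0)\colon(A,B)\to(A/B,0)$, with $q\colon A\to A/B$ the canonical quotient, is a morphism of $\cA^\square$ (since $q\circ(B\into A)=0$). Applying the universal finitely additive relative homology $H\colon\cA^\square\to\cA_\partial^{\rm add}(\cA)$ yields, for each $i\in\Z$, a natural comparison morphism $c_{A,B,i}\colon H_i(A,B)\to H_i(A/B,0)$. I would then define
$$\cA^\partial\df \cA_\partial^{\rm add}(\cA)/\ll \ker c_{A,B,i},\ \coker c_{A,B,i}\mid (A,B)\in\cA^\square,\ i\in\Z\gg,$$
and let $H^\partial$ be the image of $H$ under the projection. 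By construction each $c_{A,B,i}$ becomes an isomorphism in $\cA^\partial$, so $H^\partial$ is a finitely additive relative homology satisfying $H^\partial_i(A,B)\cong H^\partial_i(A/B)$, \ie a $\partial$-homology in the sense of Definition \ref{parthom}. Invoking Lemma \ref{partial}, $H^\partial$ corresponds to a Grothendieck homological functor $T^\partial\colon\cA\to\cA^\partial$, given by $T^\partial_i(A)\df H^\partial_i(A,0)$.

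For universality, given $T\in\Hom_\partial(\cA,\cB)$, Lemma \ref{partial} produces the associated $\partial$-homology $H^T\in\Hom_{\rm add}(\cA^\square,\cB)$. By Proposition \ref{univadd}, $H^T$ is the image of $H$ under a unique exact functor $r_{H^T}\colon\cA_\partial^{\rm add}(\cA)\to\cB$; since $H^T_i(A,B)\to H^T_i(A/B)$ is an isomorphism by the $\partial$-homology condition, $r_{H^T}$ sends each $c_{A,B,i}$ to an isomorphism, hence kills the kernels and cokernels generating $\ker(\cA_\partial^{\rm add}(\cA)\onto\cA^\partial)$. Thus $r_{H^T}$ descends uniquely to an exact functor $\bar r_T\colon\cA^\partial\to\cB$ with $\bar r_T\circ T^\partial\cong T$, yielding the natural equivalence $\Hom_\partial(\cA,\cB)\cong\Ex(\cA^\partial,\cB)$. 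The main subtlety I anticipate is checking that the comparison morphisms $c_{A,B,i}$ are well-defined and functorial at the level of $\cA_\partial^{\rm add}(\cA)$ and that inverting them is compatible with the long exact sequence of a triple already encoded there -- but both points follow directly from the functoriality and $\partial$-cube naturality axioms built into $\cA_\partial(\cA)$ in the proof of Theorem \ref{mixed}, since $(q,0)$ fits into $\partial$-cubes connecting a triple $(B,C)\into(A,C)\into(A,B)$ with its ``quotient'' triple $0\to A/C\to A/B$.
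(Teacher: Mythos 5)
Your proposal is correct and follows essentially the same route as the paper: both construct $\cA^\partial$ as the quotient of $\cA_\partial^{\rm add}(\cA)$ by the thick subcategory generated by kernels and cokernels of the comparison morphisms $H_i^{\rm add}(A,B)\to H_i^{\rm add}(A/B)$, and then invoke Lemma \ref{partial} to identify $\partial$-homologies with Grothendieck homological functors and conclude universality. You add the useful clarification that the comparison morphism is the one induced by $(q,0)\colon(A,B)\to(A/B,0)$ in $\cA^\square$, which the paper leaves implicit.
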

\begin{proof}
 The universal Grothendieck homology $T^\partial $ is given by the universal  $\partial$-homology $H^\partial$ if  $\Hom_{\partial } (\cA, - )$ is regarded as a subfunctor of $\Hom_{\rm add}(\cA^{\square}, -)$ given by $\partial$-homologies, by Lemma \ref{partial}.  Let  $\cA^\partial$ be the quotient of $ \cA_\partial^{\rm add}(\cA)$ given by the thick subcategory generated by kernels and cokernels of $ H_i^{\rm add}(A, B)\to H_i^{\rm add}(A/B)$ for all $(A, B)\in \cA^{\square}$, where $ H^{\rm add}: \cA^{\square}\to \cA_\partial^{\rm add}(\cA)$ is the universal additive relative homology of Proposition \ref{univadd}. Then the image $H^\partial: \cA^{\square}\to\cA^\partial$ of  $H^{\rm add}$ under the projection is a $\partial$-homology which yields the universal Grothendieck homology  $T^\partial\df T^{H^\partial}$ by the argument in the proof of Lemma \ref{partial}.
\end{proof}
\begin{remark} To see that $\cA^\partial\neq 0$ for any $\cA\neq 0$ we point out that $\Ex (\cA, -)$  is a subfunctor of $\Hom_{\partial } (\cA, - )$ given by
$F\leadsto T_a\df F$ and $T_i \df 0$ for $i\neq a$ and $a\in \Z$ fixed. Moreover, for any object $A\in \cA$ of an abelian category $\cA$ such that $\cA\neq \cA\otimes\Q$, \eg $\cA =\Zmod$,   let ${}_nA$ and $A/n$ be the kernel and cokernel of the multiplication by $n$, a fixed positive integer. Let $T_i^n(A)=0$ for $i\neq a, a-1$, $T_a^n(A) \df{}_nA$ and $T_{a-1}^n(A)\df A/n$  for $a\in \Z$ fixed. We have $\{T_i^n\}_{i\in \Z}\in \Hom_{\partial } (\cA, \cA)$ 
\end{remark}
\subsection{Satellite homology}
Consider $T =\{T_i \}_{i\in \Z }$ with $T_{-i} = 0$ for $i<0$ and set $T^i\df T_{-i}$ now concentrated in positive degrees $i\geq 0$, with the $\partial_i$ increasing degrees: thus  $T^0$ is left exact (this is an exact covariant $\partial$-functor in \cite[\S 2.1]{Gr}). We shall denote by $ \Hom_{\partial }^+  (\cA, \cB )\subset \Hom_{\partial}( \cA, \cB)$ the subcategory of such $\partial$-functors.  
For a left exact functor $F: \cA \to \cB$ denote $RF =\{ R^iF\}_{i\in \N}$ the \emph{right satellite} or right derived functor which belongs to $\Hom_{\partial }^+  (\cA, \cB )$: it exists under the condition that every object of $\cA$ has an injective effacement, see \cite[Thm.\,2.2.2 \& \S 2.3]{Gr} and \cite[Chap.\,III]{CE}, \eg if $\cA$ is Grothendieck, see also \cite[Thm.\,9.6.2]{KS}. 

Considering additive homologies on $\cA^\square$ concentrated in non positive degrees $[-\infty, 0]$, \ie $H =\{H_{i}\}_{i \in \Z}$ such that $H_i =0$ if  $i>0$, we shall denote $H =\{H^{i}\df H_{-i}\}_{i \in \N}$ and
$\Hom_{\rm add}^+(\cA^{\square}, \cB)\subset \Hom_{\rm add}(\cA^{\square}, \cB)$ the corresponding subcategory. Now 
 observe that for a $\partial$-homology $H$ the restricted functor  $H^0:\cA\to \cB$ (\cf the proof of Lemma \ref{partial}) is a left exact functor. We may consider the right satellite $RH^0$ of $H^0$ and get another $\partial$-homology. 
\begin{lemma}\label{lex}
Let $\cA$ be an abelian category. If the right satellite $RF$ exists for any left exact functor $F\in  \Lex (\cA, \cB)$   (\eg if  $\cA$ is a  Grothendieck category) then the forgetful functor
$$ T =\{T^i\}_{i\in \N }\leadsto T^0:   \Hom_{\partial }^+  (\cA, \cB ) \to \Lex (\cA, \cB)$$
has a left adjoint/left inverse given by the right satellite 
$$F\leadsto RF : \Lex (\cA, \cB)\into \Hom_{\partial }^+  (\cA, \cB )$$
which is fully faithful. The fully faithtful functor
$$ \Lex (\cA, \cB)\into \Hom_{\rm add}^+(\cA^{\square}, \cB)$$
(induced  by composition with that in Lemma \ref{partial}) has essential image those $\partial$-homolo\-gies $H$ which coincide with the right satellite of their $H^0$. 
\end{lemma}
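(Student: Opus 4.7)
The strategy is to deduce everything from the classical universal property of the right satellite, as in \cite[Thm.\,2.2.2 \& \S 2.3]{Gr}: under the assumption that every object of $\cA$ admits an injective effacement, for any $F\in \Lex (\cA,\cB)$ the right derived functor $RF=\{R^iF\}_{i\in\N}\in \Hom_\partial^+(\cA,\cB)$ is characterised by the property that, for every $T\in \Hom_\partial^+(\cA,\cB)$, the restriction map
$$\Hom_{\partial}^+(RF, T)\longby{\simeq} \Nat (F, T^0),\qquad \tilde\eta\leadsto \tilde\eta^{0},$$
is a bijection, natural in $T$ and $F$. This is precisely the statement that $F\leadsto RF$ is left adjoint to the restriction functor $T\leadsto T^0$. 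The left-inverse assertion $(RF)^0=R^0F=F$ is part of the standard definition of $RF$ when $F$ is already left exact. Together these two facts yield fully faithfulness: for $F,G\in \Lex (\cA,\cB)$,
$$\Hom_\partial^+(RF,RG)\cong \Nat (F,(RG)^0)=\Nat (F,G),$$
and the isomorphism is the one induced by $(-)^0$.

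For the second assertion, we compose the right-satellite embedding with the embedding of Lemma \ref{partial}, which identifies $\Hom_\partial^+(\cA,\cB)$ with the full subcategory of $\partial$-homologies inside $\Hom_{\rm add}^+(\cA^\square,\cB)$. Under this identification, $F\in \Lex (\cA,\cB)$ corresponds to the $\partial$-homology $H^{RF}$ with $H^{RF,i}(A,B)=R^iF(A/B)$. Now for any $\partial$-homology $H\in \Hom_{\rm add}^+(\cA^\square,\cB)$, its associated $\partial$-functor $T^H$ in the sense of Lemma \ref{partial} satisfies $(T^H)^0=H^0$ as an additive functor $\cA\to \cB$, and by the remark after the long exact sequence of a pair this $H^0$ is automatically left exact. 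Hence $H$ lies in the essential image of the composite embedding if and only if $T^H$ lies in the essential image of $R$, which, since $R$ is fully faithful with left-inverse $(-)^0$, happens exactly when the counit $R((T^H)^0)\to T^H$ is an isomorphism, i.e.\ when $T^H\cong R(H^0)$; translated back through Lemma \ref{partial} this reads $H\cong H^{R(H^0)}$, i.e.\ $H$ coincides with the right satellite of its own $H^0$.

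The only non-routine ingredient is the universal property of $RF$, which is exactly the hypothesis that the satellite exists and is supplied by \cite{Gr, CE}; everything else is a formal manipulation of adjunctions plus the identification in Lemma \ref{partial}. The mildly delicate point to verify is that $H^0$ really is left exact for every $\partial$-homology $H$, which is immediate from applying the long exact sequence of the pair to a short exact sequence $0\to B\to A\to A/B\to 0$ and using the isomorphism $H^i(A,B)\cong H^i(A/B)$ together with the concentration in non-negative degrees $i\geq 0$.
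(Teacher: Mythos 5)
Your proof is correct and follows essentially the same route as the paper's: both rest on Grothendieck's universal property of the right satellite, ${\rm Nat}(F,T^0)\cong{\rm Nat}_\partial(RF,T)$, to obtain the adjunction/left-inverse and fully faithfulness, and both identify the essential image by the canonical comparison $RH^0\to H$ being an isomorphism (your counit formulation is just a rephrasing of the paper's $\varphi_H$). Your additional verification that $H^0$ is left exact for any $\partial$-homology, via the long exact sequence of the pair and the concentration in non-negative degrees, is the same observation the paper records immediately before the lemma.
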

\begin{proof} If $RF$ exists then ${\rm Nat} (F, T^0) = {\rm Nat}_\partial (RF, T)$, where ${\rm Nat}$ are natural transformations and ${\rm Nat}_\partial$ are morphisms of $\partial$-functors. This follows essentially by definition, see \cite[\S 2.2]{Gr}: we have that any $\varphi^0: F\to T^0$ extends to a unique $\varphi : RF\to T$ morphism of $\partial$-functors. 
Since the restricted $\partial$-homology $H$ (denoted $T^H$ in the proof of Lemma \ref{partial}) is a $\partial$-functor we thus obtain a unique morphism $\varphi_H : RH^0\to H$. Note that for $H = H^{RF}$ we get  $\varphi_{H^{RF}} = id_{H^{RF}}$ as $RF$ is the restriction of $H^{RF}$. Therefore the essential image of  $F\leadsto H^{RF}$ is given by those $\partial$-homologies that $\varphi_H : RH^0\by{\simeq} H$ is an isomorphism. 
 \end{proof}
 \begin{defn}
Call a $\partial$-homology $H\in \Hom_{\rm add }^+  (\cA^\square, \cB )$ a \emph{right satellite homology} if  the canonical morphism $\varphi_H : RH^0\by{\simeq} H$ is an isomomorphism. 
\end{defn}
From Lemma \ref{lex} we have a canonical equivalence of the category of left exact functors with  right satellite homologies. The analogue of Theorem \ref{unipart} for satellite homologies is the following:
\begin{thm}[Universal right satellite homology]\label{unisat}
Let $\cA$ be an abelian category. If the right satellite $RF$ exists for any left exact functor $F\in  \Lex (\cA, \cB)$  then the universal right satellite homology exists and it is given as follows:
\begin{itemize}
\item [{\it i)}] the universal left exact functor $F^{\rm lex} : \cA \to \cA^{\rm lex}$ exists, where $\cA^{\rm lex}$ is a quotient of Freyd's category $\Ab (\cA)$, \ie $\cA^{\rm lex}$ is representing $\Lex (\cA, - ): \Ex \to \Cat$;
\item [{\it ii)}]  the right satellite $RF^{\rm lex}  = \{R^iF^{\rm lex} \}_{i\in\N}$ provides the universal right satellite homology under the equivalence in Lemma \ref{lex};
\item [{\it iii)}] the universal property is the following: for $RF=\{RF^i\}_{i\in \N }\in \Hom_{\partial} ^+(\cA, \cB )$ any right derived functor of a left exact functor $F$ we obtain an exact functor $r_{RF}: \cA^{\rm lex} \to \cB$ (unique up to isomorphism) and a factorisation
$$\xymatrix{\cA\ar[r]^{RF^{\rm lex}} \ar[d]_{RF} & \cA^{\rm lex} \ar[dl]^{r_{RF}} \\  \cB}$$
such that $r_{RF}R^iF^{\rm lex} =R^iF$ and $r_{RF}(\partial_i)= \partial_i$ for all $i\in \N$; 
\item [{\it iv)}] we have a factorisation
$$\xymatrix{D^+(\cA)\ar[r]^{RF^{\rm lex}} \ar[d]_{RF} &D^+( \cA^{\rm lex}) \ar[dl]^{r_{RF}} \\ D^+(\cB)}$$
of triangulated derived functors. 
\end{itemize}
\end{thm}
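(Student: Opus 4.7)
The plan is to construct the universal left exact functor $F^{\rm lex} : \cA \to \cA^{\rm lex}$ first, and then let Lemma \ref{lex} carry over all of the homological content. Once $(\cA^{\rm lex}, F^{\rm lex})$ representing $\Lex(\cA, -) : \Ex \to \Cat$ has been produced, the right satellite $RF^{\rm lex}$ exists by the standing hypothesis, is repackaged by Lemma \ref{lex} into the desired right satellite homology, and universality in (iii) reduces to universality of $F^{\rm lex}$ combined with the classical commutation of exact functors with right derivation; part (iv) then propagates the same commutation to the triangulated level.

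For (i), I would obtain $\cA^{\rm lex}$ as a quotient of Freyd's $\Ab_R(\cA)$ from Theorem \ref{Freyd}. For each short exact sequence $\sigma : 0 \to A' \to A \to A'' \to 0$ in $\cA$, functoriality of the universal additive representation $|\cdot|$ makes $|A'| \to |A| \to |A''|$ a null composition in $\Ab_R(\cA)$, so $|A'| \to |A|$ factors through a canonical comparison $c_\sigma : |A'| \to \ker(|A| \to |A''|)$. I would let $\cS \subset \Ab_R(\cA)$ be the thick subcategory generated by $\ker c_\sigma$ and $\coker c_\sigma$ as $\sigma$ varies, set $\cA^{\rm lex} \df \Ab_R(\cA)/\cS$, and take $F^{\rm lex}$ to be the composite $\cA \to \Ab_R(\cA) \to \cA^{\rm lex}$. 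By construction every $c_\sigma$ becomes an isomorphism in $\cA^{\rm lex}$, so $F^{\rm lex}$ is left exact. Conversely, for any left exact $G : \cA \to \cB$, Theorem \ref{Freyd} supplies an exact $F_G : \Ab_R(\cA) \to \cB$ with $F_G(|A|) = G(A)$; left exactness of $G$ makes $F_G(c_\sigma)$ invertible, so $F_G$ kills $\cS$ and descends uniquely to an exact $r_G : \cA^{\rm lex} \to \cB$ with $r_G F^{\rm lex} = G$, giving the representability claim in (i).

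With (i) in hand, I would invoke the hypothesis with target $\cA^{\rm lex}$ to produce $RF^{\rm lex} \in \Hom_\partial^+(\cA, \cA^{\rm lex})$ and then use Lemma \ref{lex} to identify it with a right satellite homology on $\cA^\square$, establishing (ii). Given any $RF \in \Hom_\partial^+(\cA, \cB)$ with $F \df R^0 F$ left exact, part (i) delivers an exact $r_{RF} \df r_F : \cA^{\rm lex} \to \cB$ satisfying $r_F F^{\rm lex} = F$; then $r_F \circ RF^{\rm lex}$ is an exact $\partial^*$-functor extending $F$, and the required identification $r_F \circ R^i F^{\rm lex} \cong R^i F$ (compatibly with the $\partial_i$) will follow from Grothendieck's characterisation of $RF$ as the universal exact $\partial^*$-extension of $F$ (\cite[\S 2.2]{Gr}): exactness of $r_F$ preserves the effaceability pinning $RF$ down uniquely. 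This is the step I expect to be most delicate under the bare hypothesis of the theorem, and is the substantive content of ``exact functors commute with right derivation''. For (iv), the same exactness lifts $r_F$ to a triangulated functor $D^+(\cA^{\rm lex}) \to D^+(\cB)$, and the factorisation $RF \cong r_F \circ RF^{\rm lex}$ may be verified on any resolution $A \to I^\bullet$ by $F^{\rm lex}$-acyclic (e.g.\ injective) objects, since $r_F R^i F^{\rm lex}(A) = H^i(r_F F^{\rm lex}(I^\bullet)) = H^i(F(I^\bullet)) = R^i F(A)$.
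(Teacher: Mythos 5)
Your proposal is correct and follows essentially the same route as the paper: your $\ker c_\sigma$ and $\coker c_\sigma$ are precisely the homology objects of the complex $0 \to |A'| \to |A| \to |A''| \to 0$ that the paper kills to form $\cA^{\rm lex} = \Ab(\cA)/\cS$, and parts (ii)--(iv) are then obtained, as in the paper, by transporting $RF^{\rm lex}$ through the equivalence of Lemma \ref{lex} and the representability of $\Lex(\cA,-)$. Your explicit verification that $r_F \circ RF^{\rm lex} \cong RF$ (exactness of $r_F$ preserves effaceability, hence universality of the $\partial$-functor) spells out the commutation of exact functors with satellites that the paper leaves implicit in ``it is given by representability in ii)'' and ``since $r_{RF}$ is exact''.
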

\begin{proof}
i) Consider $Y\df |\cA| :\cA \into\Ab (\cA)$ the Freyd's free abelian category of $\cA$ regarded as an additive category, see Theorem \ref{Freyd}. Consider 
$0\to B\to A\to C\to 0$ an exact sequence in $\cA$ and let $0\to Y(B)\to Y(A)\to Y(C)\to 0$ be the induced complex in $\Ab (\cA)$. Denote $H_B$ and $H_A$ the homologies of this complex at $Y(B)$ and $Y(A)$ respectively.  Let $\cS$ be thick subcategory of $\Ab (\cA)$ generated by the objects $H_B$ and $H_A$ for all $B\into A$ mono of $\cA$. Define $\cA^{\rm lex}\df \Ab (\cA)/\cS$
and let $F^{\rm lex} : \cA \to \cA^{\rm lex}$ be the composition of $Y$ with the projection in such a way that the 
$0\to F^{\rm lex} (B)\to F^{\rm lex} (A)\to F^{\rm lex}(C)$ is now exact in  $\cA^{\rm lex}$.  The left exact functor $F^{\rm lex}$ is the universal left exact functor as it easily follows from the universality of Freyd's construction. 

ii) It follows from i) and the proof of  Lemma \ref{lex}-\ref{partial} since the right satellite homology 2-functor is representable by $RF^{\rm lex}$ regarded in $\Hom_{\rm add }^+  (\cA^\square, \cA^{\rm lex} )$.

iii) It is given by representability in ii).

iv) Since $F = r_{RF}F^{\rm lex}$ and $r_{RF}$ is exact.
\end{proof}
\begin{remark}
Note that there is an exact functor $F_{id_\cA}: \Ab (\cA)\to \cA$ whose restriction to $\cA$ is the identity. The functor 
$F_{id_\cA}$ factors though $\cA^{\rm lex}$ and yields an exact functor $r_{id_{\cA}}: \cA^{\rm lex}\to\cA$ which is a section of the left exact functor $F^{\rm lex}: \cA \to\cA^{\rm lex}$. 
\end{remark} 
\begin{example}
Let $\cR$ be an additive category such that $\cR\mbox{-}{\rm mod}$ is abelian, \eg $\Rmod$ for  $R$ a coherent ring. 
For $\cA = \cR\mbox{-}{\rm mod}^{op}$ we get $\cA^{\rm lex} = \Ab (\cR)$ and $F^{\rm lex}= Y$ is given by the Yoneda left exact embedding as shown (\cf  \cite[Example 4.9 ]{P}). For  $F$ left exact 
$$\xymatrix{\cR\mbox{-}{\rm mod}^{op}\ar[r]^{Y} \ar[d]_{F} & \Ab (\cR) \ar[dl]^{r_{F}} \\  \cB}$$
where $r_{F}$ is given by the universal property of $\Ab (\cR)$ applied to the restriction of $F$ to $\cR$ along the Yoneda  $\cR\into \cR\mbox{-}{\rm mod}^{op}$. In particular, for $\cA= \Zmod^{op}$ we obtain $\cA^{\rm lex} = \iZmod$. 
\end{example}


\begin{thebibliography}{}
\bibitem[SGA4]{sga4}{\sc M. Artin, A. Grothendieck, J.-L. Verdier}: {\it Th\'eorie des topos et cohomologie \'etale des sch\'emas}, S\'eminaire de g\'eom\'etrie alg\'ebrique du Bois-Marie 1963–1964 (SGA 4), Vol. 1, Lect. Notes in Math. {\bf 269}, Springer, 1972.
\bibitem{A} Y. Andr\'e: Pour une th\'eorie inconditionnelle des motifs, {\it Publ. Math.} IH\'ES, {\bf 83}(1996) 5-49.
\bibitem{AB} Y. Andr\'e: Groupes de Galois motiviques et p\'eriodes, Expos\'e 1104 in {\it S\'eminaire Bourbaki: Volume 2015/2016 Expos\'es 1104–1119} Ast\'erisque {\bf 390} (2017) 1-26
\bibitem{AK} Y. Andr\'e \& B. Kahn: Construction inconditionnelle de groupes de Galois motiviques, C. R. Acad. Sci. Paris {\bf 334} (2002) 989-994.
\bibitem{Ay} J. Ayoub: L’alg\`ebre de Hopf et le groupe de Galois motiviques d’un corps de caract\'eristique nulle, Journal f\"ur die reine und angew. Mathematik {\bf 693} (2014), partie I :1-149 ; partie II : 151-226.
\bibitem{UCTII} L. Barbieri-Viale: On topological motives, arXiv:2107.07993 [math.AG].  
\bibitem{BV} L. Barbieri-Viale: $\mathbb{T}$-motives, {\it J. Pure Appl. Algebra}  {\bf 221} (2017) 1495-1898.
\bibitem{BVP} L. Barbieri-Viale: \& M. Prest: Definable categories and $\mathbb{T}$-motives,  {\it Rend. Sem. Mat. Univ. Padova} {\bf 139} (2018) 205-224.
\bibitem{BVHP} L. Barbieri-Viale, A. Huber-Klawitter \& M. Prest: Tensor structure for Nori motives, {\it Pacific Journal of Math.} {\bf 306} No. 1 (2020) 1-30
\bibitem{BVPT} L. Barbieri-Viale \& M. Prest: Tensor product of motives via K\"unneth formula, {\it J. Pure Appl. Algebra} {\bf 224} No. 6 (2020)  
\bibitem{BCL} L. Barbieri-Viale, O. Caramello \& L. Lafforgue: Syntactic categories for Nori motives, {\it Sel. Math. New Ser.} {\bf 24} (2018) 3619-3648.
\bibitem{Be} A. Beilinson: Remarks on Grothendieck's standard conjectures in {\it Regulators} Contemp. Math. {\bf 571} AMS, Providence, (2012) 25–32
\bibitem{CE} H. Cartan \& S. Eilenberg: {\it Homological Algebra} Princeton University Press, (1956).
\bibitem{DT} P. Deligne \& J.S. Milne: Tannakian Categories, in {\it Hodge Cycles, Motives, and Shimura Varieties}, LNM 900, 1982, pp. 101-228
\bibitem{De} P. Deligne: Le groupe fondamental de la droite projective moins trois points, in {\it Galois groups over $\Q$} (Berkeley, CA, 1987), Math. Sci. Res. Inst. Publ., {\bf 16} Springer, New York, (1989) 79-297 
\bibitem{Dy} E. Dyer: {\it Cohomology theories}, Mathematics Lecture Note Series W. A. Benjamin, Inc., New York-Amsterdam,  1969.
\bibitem{DU} D. Dugger: Universal homotopy theories, Advances in Math. {\bf 164} (2001) 144-176 
\bibitem{ES}  S. Eilenberg \& N. Steenrod: Axiomatic approach to homology theory, {\it Proc. Natl. Acad. Sci.} USA {\bf 31 } (1945)
\bibitem{ESF}  S. Eilenberg \& N. Steenrod: {\it Foundations of Algebraic Topology} Princeton Legacy Library, 2016 (1952)
\bibitem{Fr} P. Freyd: Representations in Abelian categories, in: Eilenberg S., Harrison D.K., MacLane S., R\"ohrl H. (eds) {\it Proceedings of the Conference on Categorical Algebra, La Jolla, 1965} Springer (1966).
\bibitem{GG} G. Garkusha: Grothendieck categories, {\it St. Petersburg Math. J.} {\bf 13} (2002) 149-200
\bibitem{GA} G. Garkusha \& I. Generalov: Duality for categories of finitely presented modules, {\it St. Petersburg Math. J.} {\bf 11} (2000) 1051-1061 
\bibitem{Gr} A. Grothendieck: Sur quelques points d'alg\'ebre homologique, {\it Tohoku Math. Journal}, Second Series, {\bf 9} (1957)
\bibitem{HMS} A. Huber  \&  S. M\"uller-Stach: {\it Periods and Nori motives} With contributions by Benjamin Friedrich and Jonas von Wangenheim. Ergebnisse der Mathematik und ihrer Grenzgebiete. 3. Folge. A Series of Modern Surveys in Mathematics [Results in Mathematics and Related Areas. 3rd Series. A Series of Modern Surveys in Mathematics] {\bf 65}, Springer, Cham, 2017.
\bibitem{AH} A. Heller: Stable homotopy categories, {\it Bull. Amer. Math. Soc.} {\bf 74} (1968)  28-63 
\bibitem{JL} C. U. Jensen and H. Lenzing: {\it Model Theoretic Algebra; with particular emphasis on Fields, Rings and Modules} Gordon and Breach, 1989.
\bibitem{El} P. Johnstone: {\it Sketches of an Elephant: A Topos Theory Compendium} Vol. 1 \& 2, Clarendon Press, Oxford Logic Guides Vol. 43 \& 44, 2002.
\bibitem{KS} M. Kashiwara \& P. Schapira: {\it Categories and Sheaves} Grundlehren Math. Wiss., vol. 332, Springer, 2006.
\bibitem{KH} H. Krause: {\it Homological Theory of Representations} to appear, Cambridge Studies in Advanced Mathematics, Cambridge University Press, Cambridge, 2021.
\bibitem{NT} A. Neeman: {\it Triangulated categories}, Annals of Math. Studies Vol. 148, Princeton Univ. Press, 2001.
\bibitem{P} M. Prest: {\it Definable additive categories: purity and model theory} Mem. Amer. Math. Soc. {\bf 210} (2011)
\bibitem{PR} M. Prest \& R. Rajani: Structure sheaves of definable additive categories {\it J. Pure Appl. Algebra} {\bf 214} (2010) 1370-1383
\bibitem{Vh} V. Voevodsky: Homology of Schemes,  {\it Selecta Mathematica} Vol. 2  (1996) 111-153
\end{thebibliography}
\end{document}